\crefname{hypothesis}{Hypothesis}{Hypotheses}
\theoremstyle{plain}
\newcommand{\vect}[1]{\boldsymbol{#1}}
\title{Equilibria analysis of a networked bivirus epidemic model using Poincar\'e--Hopf and Manifold Theory\thanks{Submitted to the editors: \today.
\funding{M. Ye was supported in part by the Western Australian Government through the Premier's Science Fellowship Program. }}}
\author{Brian D.O. Anderson\thanks{School Engineering, Australian National University, Acton 2601, A.C.T., Australia
  (\email{brian.anderson@anu.edu.au}).}
\and Mengbin Ye\thanks{Curtin Centre for Optimisation and Decision Science, Curtin University, Perth 6102, WA, Australia 
  (\email{mengbin.ye@curtin.edu.au}).}
}
\begin{document}

\maketitle
\begin{abstract}
    This paper considers a deterministic Susceptible-Infected-Susceptible (SIS) networked bivirus epidemic model (termed the bivirus model for short), in which two competing viruses spread through a set of populations (nodes) connected by two graphs, which may be different if the two viruses have different transmission pathways. 
    The networked dynamics can give rise to complex equilibria patterns, and most current results identify conditions on the model parameters for convergence to the healthy equilibrium (where both viruses are extinct) or a boundary equilibrium (where one virus is endemic and the other is extinct). However, there are only limited results on coexistence equilibria (where both viruses are endemic). This paper establishes a set of ``counting'' results which provide lower bounds on the number of coexistence equilibria, and perhaps more importantly, establish properties on the local stability/instability properties of these equilibria. In order to do this, we employ the Poincar\'e-Hopf Theorem but with significant modifications to overcome several challenges arising from the bivirus system model, such as the fact that the system dynamics do not evolve on a manifold in the typical sense required to apply Poincar\'e-Hopf Theory. Subsequently, Morse inequalities are used to tighten the counting results, under the reasonable assumption that the bivirus system is a Morse-Smale dynamical system. Numerical examples are provided which demonstrate the presence of multiple attractor equilibria, and multiple coexistence equilibria.
\end{abstract}

\begin{keywords}
  susceptible-infected-susceptible (SIS) model, networked systems, dynamical systems, Morse theory, competitive virus spreading
\end{keywords}

\begin{AMS}
  34D05, 34C12, 37C65, 92D30, 53C80, 53Z10, 37D15
\end{AMS}

\section{Introduction}

The use of mathematical models to study the spreading of infectious diseases within a population has a long history within the field of epidemiology~\cite{brauer2008mathematical}. One key use of such models is to determine the long term dynamics of the disease of interest as a function of various model parameters, such as the rate of infection and rate of recovery. Deterministic compartmental models have proved especially popular due to their balance between analytic tractability, low cost for numerical simulations, and reasonable accuracy in capturing epidemics. The Susceptible--Infected--Susceptible (SIS) paradigm is a fundamental compartmental paradigm, where each individual in a population is assumed to be healthy and susceptible to infection, or infected and capable of transmitting to susceptible others. Infected individuals can recover, but have no immunity (temporary or permanent) and can be immediately reinfected.  

While SIS models for the spread of a single disease have received significant attention~\cite{lajmanovich1976SISnetwork,shuai2013epidemic_lyapunov}, there has been in the past two decades an increasing interest in models that capture the spread of multiple diseases~\cite{wang2019coevolution}. Cooperative diseases are those in which infection with one disease increases the likelihood of infection with another disease, while competitive diseases are those in which an individual can only be infected with one disease at any one time. The networked bivirus SIS model is one of the more widely studied competitive multivirus models. This model assumes that two competitive diseases, termed virus~$1$ and virus~$2$, spread through a set of nodes, with potentially two different network topologies capturing the different spreading patterns of the two viruses. Virtually all studies have assumed that a form of connectivity holds for the two network topologies. The same bivirus model, i.e., the same set of ordinary differential equations, has been studied in different contexts, with nodes representing gendered groups (two female and one male group) in \cite{carlos2}, individuals in \cite{sahneh2014competitive}, and large populations of constant size in \cite{ye2021_bivirus,santos2015bi}.

There has been a significant amount of literature devoted to the networked bivirus SIS model~\cite{carlos2,sahneh2014competitive,ye2021_bivirus,santos2015bi,prakash2012winner,liu2019bivirus,pare2021multi,janson2020networked,yang2017bi}-- see \cite{ye2022competitive} for a brief survey. In summary, a complete understanding is available for the case of $n=1$~\cite{prakash2012winner} and $n=2$~\cite{ye2021_bivirus}, and for a special $n=3$ case in \cite{carlos2}, but there are still important gaps in understanding the bivirus dynamics for arbitrary $n \geq 3$ node networks. In \cite{ye2021_bivirus}, it was established that for generic model parameters, the number of equilibria are finite, and convergence to an equilibrium occurred for almost all initial conditions. There are nongeneric parameters which instead yield a connected continuum of equilibria. Assuming however genericity of the parameters, there are three \textit{types} of equilibria: i) the ``healthy equilibrium'' (in which both viruses are extinct in every node) and this is always unique, ii) equilibria where virus~$1$ is present and virus~$2$ is extinct (or vice versa), these being termed ``boundary equilibria'', and iii) ``coexistence equilibria'' in which both viruses are present. It turns out that there are at most two boundary equilibria (one for each virus being extinct)~\cite{ye2022competitive}.

Much of the literature has focused on elimination of both viruses and ``survival-of-the-fittest'' outcomes, in which one virus goes extinct while the other persists. Necessary and sufficient conditions for global stability of the healthy equilibrium~\cite{liu2019bivirus,pare2021multi} and local exponential stability of boundary equilibria~\cite{ye2021_bivirus,sahneh2014competitive} are well documented, as are some sufficient conditions for global stability of boundary equilibria~\cite{santos2015bi,ye2021_bivirus}. On the other hand, coexistence equilibria are understudied except for highly nongeneric parameter values~\cite{liu2019bivirus,pare2021multi,ye2021_bivirus}. In fact, while some sufficient conditions for there to be \textit{no} coexistence equilibria are known~\cite{ye2021_bivirus,santos2015bi,liu2019bivirus}, there are only a few sufficient conditions for the existence of coexistence equilibria~\cite{janson2020networked}. More importantly, there are no general analytical results that allow one to relate a given set of generic model parameter values to the number of coexistence equilibria and their local stability properties. This is in part because, with arbitrary $n$ nodes, the dynamics are captured by $2n$ coupled nonlinear differential equations (with the network topologies adding further complexities to the analysis). As a demonstration of the potential complexity of the equilibria patterns, we will in the sequel present two simple $n=4$ examples which have four attractive equilibria (two boundary equilibria and two coexistence equilibria) and two attractive equilibria (one boundary and one coexistence), respectively. 

The limitations discussed motivate us to develop new ``counting'' results for studying coexistence equilibria and their stability properties. We do this using a tool, the Poincar\'e-Hopf Theorem, see e.g. \cite{guillemin2010differential,milnor1997topology}, which enables us to derive one of our main results: a counting result involving the number of equilibria of different indices. Namely, we obtain a constraint on the number and types of coexistence equilibria, (e.g. stable attractor, source, saddle point of certain index). We remark that the Poincar\'e-Hopf Theorem allows an elegant derivation of the principal results applying to the single virus networked SIS model, see \cite{ye2021applications,anderson2020extrema}. However, as will later be clear, the region of interest relevant to a networked bivirus system cannot be so straightforwardly treated as in the single virus case, due to the fact that this region is not a manifold in the sense to which the Poincar\'e-Hopf Theorem applies. In other words, as far as equilibria of the underlying equations are concerned, bivirus is not a simple extension of single virus, and we employ two key ideas to overcome this challenge. The first is that the region of interest can be distorted if necessary at its boundary, to incorporate any stable equilibria located on the boundary, and the second is that a homeomorphism can be established between the modified region of interest and an even-dimensional sphere excluding a single point of that sphere (a fact which then makes it much more straightforward to use the Poincar\'e-Hopf Theorem). As part of our application of the Poincar\'e-Hopf Theorem, we also prove that for generic model parameters, each equilibrium is isolated (and hence there are a finite number), nondegenerate, and hyperbolic. We then make a reasoned argument as to why the networked bivirus SIS model is a so-called Morse-Smale system \cite{smale1967differentiable} (but offer no rigorous proof), before exploiting Morse-Smale inequalities to further strengthen our ability to count the number of coexistence equilibria and determine their stability properties.

The paper is structured as follows. In \Cref{sec:background}, after notational, linear algebra and graph theoretical preliminaries we review the single virus and bivirus equations, and present a motivating example with multiple attractive coexistence equilibria. 
We also introduce the Poincar\'e-Hopf Theorem and indicate the difficulties in immediately applying it. \Cref{sec:main} establishes the first main result of the paper using the Poincar\'e-Hopf Theorem. The use of Morse-Smale inequalities to develop further counting results occurs in \Cref{sec:morse_smale}. A brief illustration of the results is presented in \Cref{sec:examples}, by considering several bivirus systems with two nodes. Finally, conclusions are drawn in \Cref{sec:conclusions}.

\section{Background Material, Epidemic Dynamics and Poincar\'e-Hopf Theory}\label{sec:background}

\subsection{Notation, Linear Algebra and Graph Theory Background}\label{ssec:background}

For real vectors $x,y\in\mathbb R^n$ with entries $x_i$ and $y_i$, $x\geq y$ denotes $x_i\geq y_i$, $x>y$ denotes $x\geq y$ but $x\neq y$, and $x\gg y$ denotes $x_i>y_i$ for all $i=1,2,\dots,n$. For matrices $A,B$ of the same dimensions, $A\geq B, A>B, A\gg B$ denote the same thing as the corresponding inequalities for $\text{vec}\, A$, $\text{vec}\, B$. A matrix $A \in \mathbb R^{n\times n}$ is said to be nonnegative if $\text{vec}\, A \geq  \vect 0_{n^2}$. The $n$-vectors of all 1's and all 0's are denoted by $\vect 1_n$ and $\vect 0_n$ respectively.

For a square matrix $A$, $\rho(A)$ denotes the spectral radius and $\sigma(A)$ the spectral abscissa, or greatest real part of any eigenvalue of $A$. The matrix is termed reducible (irreducible) if there exists (does not exist) a permutation matrix $P$ such that $P^{\top}AP$ is block triangular. For a nonnegative and nonzero $A$, $\sigma(A)=\rho(A)$ and corresponds to a real eigenvalue for which there exist associated left and right eigenvectors which can be taken to be nonnegative; if in addition, $A$ is irreducible, $\rho(A)$ is a unique eigenvalue and these eigenvectors can be taken to be positive and unique up to a scaling. A matrix $-D+A$ with $A$ nonnegative and irreducible and $D$ diagonal has $\sigma(-D+A)$ as a simple eigenvalue, and the associated left and right eigenvectors can be taken to be positive. These facts come from the Perron-Frobenius Theorem and its corollaries, see e.g \cite{horn1994topics_matrix}. Additionally, when $D$ is also positive definite,  $\sigma(-D+A)>0\iff\rho(D^{-1}A)>1, \sigma(-D+A)=0 \iff \rho(D^{-1}A)=1$ and $\sigma(-D+A)<0\iff \rho(D^{-1}A)<1$~\cite[Proposition~1]{liu2019bivirus}. Irreducible nonnegative $A$ have the following property: if $Ax=y$ for $x>{\bf{0}}_n$, $y>{\bf{0}}_n$, then $y$ cannot have a zero entry in every position where $x$ has a zero entry (but it may have zero entries in some of those positions).

A weighted directed graph $\mathcal G$ is a triple $\mathcal G=(\mathcal V,\mathcal E, W)$ with $\mathcal V=\{1,2,\dots,n\}$ denoting the vertex (or node) set, $\mathcal E\subset\mathcal V\times \mathcal V$ denoting the edge set, and $A$ a nonnegative $n\times n$ matrix with $a_{ij}>0$ if and only if $(j,i)\in \mathcal E$, implying the existence of a directed edge from vertex $j$ to vertex $i$.  A path is a sequence of edges of the form $(i_0,i_1),(i_1,i_2),\dots,(i_{p-1},i_{p})$ with the vertices distinct, except possibly for the first and last. A (directed) graph is strongly connected if and only if any one vertex can be joined to any other vertex by a path starting from the first and ending at the second. Further,  $\mathcal G$ is strongly connected if and only if $A$ is irreducible~\cite{berman1979nonnegative_matrices}. 

For a set $X$, typically in this paper a subset of $\mathbb R^n$, the closure, interior and boundary are denoted by $\bar X, X^{\circ}$ and $\partial X$ respectively.

\subsection{Dynamics of single virus networks}\label{ssec:single_virus}
The spreading dynamics of a single virus have been studied using deterministic susceptible-infectious-susceptible (SIS) network models for a long time, see e.g. \cite{lajmanovich1976SISnetwork,fall2007SIS_model,vanMeighem2009_virus,mei2017epidemics_review,shuai2013epidemic_lyapunov}. We summarize the results in a manner relevant to the treatment of bivirus problems below. Assume there are $n$ populations, corresponding to vertices or nodes of a directed graph, each of a large and constant size. Individuals in each population can exist in and move between one of two mutually exclusive \textit{health compartments}: individuals may be healthy and susceptible (S) to becoming infected by the virus, or infected (I) with the virus and able to transmit it. The edges of the graph capture virus transmission possibilities; there is an edge from node $j$ to node $i$ precisely when virus transmission can occur from the infected individuals in the $j$-th population to the susceptible individuals in the $i$-th population. The rates of infection are captured by nonnegative $\beta_{ij}$, so that $\beta_{ij} > 0$ if and only if $(j,i)$ is an edge in the graph (and thus the $\beta_{ij}$ can be regarded as weights on the edges of the underlying graph). Individuals infected with the virus can recover (with no immunity and immediate susceptibility to infection again): the recovery (healing) rate of each population $i$ is captured by the positive parameter $\delta_i$. Let $x_i(t)$, the $i$-th entry of a vector $x(t)\in\mathbb R^n$, denote the fraction of individuals of population $i$ infected with the virus, and let $D={\rm{diag}}(\delta_1,\delta_2,\dots,\delta_n), B=(\beta_{ij})$ and $X(t)={\rm{diag}}(x_1(t),x_2(t),\dots,x_n(t))$. The dynamical equations describing the SIS network system then become
\begin{equation}\label{eq:singlevirusunlabelled}
\dot x(t)=[-D+(I-X(t))B]x(t).
\end{equation}
Since the standard assumption is that $\delta_i > 0$ for all $i$, this means that the diagonal $D$ is assumed to be positive definite. Intuitively, this naturally ensures that each population could recover from either virus if infection transmissions were totally prevented. It is also standard to assume $B$ is irreducible, which is equivalent to $\mathcal{G} = (\mathcal{V}, \mathcal{E}, B)$ being strongly connected; this assumption implies that there is a path of transmission for the virus from any population to any other population.

The key properties of \eqref{eq:singlevirusunlabelled} established by the literature~\cite{lajmanovich1976SISnetwork,fall2007SIS_model,mei2017epidemics_review} include a focus on asymptotic behavior and can be summed up as follows:

\begin{theorem}\label{thm:singlevirus}
With notation as given above, consider the single virus equation~\eqref{eq:singlevirusunlabelled}. Suppose that ${\bf{0}}_n\leq x(0)\leq {\bf{1}}_n$, and the graph $\mathcal{G} = (\mathcal{V}, \mathcal{E}, B)$ is strongly connected. Then ${\bf{0}}_n\leq x(t)\leq {\bf{1}}_n,\,\forall\,t\geq 0$. Moreover:
\begin{enumerate}
    \item If $\rho(D^{-1}B)\leq 1$, all trajectories converge asymptotically to the healthy equilibrium $\vect 0_n$ as $t\to\infty$. Convergence is exponentially fast iff \mbox{$\rho(D^{-1}B)< 1$.} 
    \item If $\rho(D^{-1}B)> 1$, then there is precisely one other equilibrium, $\bar x$, besides the healthy equilibrium $\vect 0_n$. All trajectories converge exponentially fast to $\bar x$ as $t\to\infty$ except if $x(0) = \vect 0_n$. The equilibrium $\bar x$ satisfies $\vect 0_n \ll \bar x \ll \vect 1_n$ and is called the endemic equilibrium.
\end{enumerate}
\end{theorem}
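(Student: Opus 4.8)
The plan is to read the result off three structural features of the vector field $f(x):=[-D+(I-X)B]x$ on the box $\mathcal H:=\{x:\vect 0_n\leq x\leq\vect 1_n\}$, writing $f(x)=(-D+B)x-XBx$ throughout. First, forward invariance: on the face $x_i=0$ one has $\dot x_i=(Bx)_i\geq 0$ (since $B\geq\vect 0$ and $x\geq\vect 0_n$), and on the face $x_i=1$ one has $\dot x_i=-\delta_i<0$; a Nagumo sub-tangentiality argument then shows no trajectory starting in $\mathcal H$ can leave it, which proves the invariance assertion. Second, $\partial f_i/\partial x_j=(1-x_i)\beta_{ij}\geq 0$ for $j\neq i$ on $\mathcal H$, so the system is cooperative, and since $B$ is irreducible the Jacobian is irreducible on $\mathcal H^{\circ}$, making the flow $\phi_t$ strongly monotone. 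Third, a direct computation gives $f(\lambda x)-\lambda f(x)=\lambda(1-\lambda)\,XBx$, which for $0<\lambda<1$ is $\geq\vect 0_n$ and, by irreducibility, strictly positive in every coordinate whenever $x\gg\vect 0_n$ — that is, $f$ is strictly sublinear on $\mathcal H^{\circ}$. The two cases of the theorem then follow the classical theory of sublinear cooperative systems, with the dividing line being the spectral abscissa of the linearization $Df(\vect 0_n)=-D+B$, which the preliminaries relate to $\rho(D^{-1}B)$.

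For case~1 ($\rho(D^{-1}B)\leq 1$) I would first show $\vect 0_n$ is the only equilibrium in $\mathcal H$. If $\bar x>\vect 0_n$ solved $\vect 0_n=-D\bar x+(I-\bar X)B\bar x$, then examining a coordinate with $\bar x_i=0$ together with strong connectivity forces $\bar x\gg\vect 0_n$; the scalar identities $\delta_i\bar x_i=(1-\bar x_i)(B\bar x)_i$ with $(B\bar x)_i>0$ then force $\bar x\ll\vect 1_n$ and hence $\delta_i\bar x_i<(B\bar x)_i$, i.e.\ $\bar x\ll D^{-1}B\bar x$; by Perron--Frobenius this gives $\rho(D^{-1}B)>1$, a contradiction. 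When $\rho(D^{-1}B)<1$ we have $\sigma(-D+B)<0$, and since $f(x)\leq(-D+B)x$ pointwise on $\mathcal H$ the comparison principle for Metzler systems yields $\vect 0_n\leq x(t)\leq e^{(-D+B)t}x(0)\to\vect 0_n$ exponentially. When $\rho(D^{-1}B)=1$, so $\sigma(-D+B)=0$ is a simple eigenvalue with positive left eigenvector $w$, I would take $V(x):=w^{\top}x$, for which $\dot V=w^{\top}f(x)=-\sum_i w_i x_i(Bx)_i\leq 0$; LaSalle's invariance principle, combined with strong connectivity to show the largest invariant subset of $\{\dot V=0\}\cap\mathcal H$ is $\{\vect 0_n\}$, gives $x(t)\to\vect 0_n$, while the zero eigenvalue of $-D+B$ and the quadratic term reduce the centre dynamics to a scalar equation $\dot c\sim-\gamma c^2$ with $\gamma>0$, showing the decay is algebraic rather than exponential.

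For case~2 ($\rho(D^{-1}B)>1$, so $\sigma(-D+B)>0$ with positive right eigenvector $v$) I would construct $\bar x$ by a monotone sandwich. For small $\epsilon>0$, $f(\epsilon v)=\epsilon\,\sigma(-D+B)\,v-\epsilon^{2}\,\diag(v)\,Bv\gg\vect 0_n$, so the trajectory from $\epsilon v$ is increasing and bounded above by $\vect 1_n$, hence converges upward to an equilibrium $\bar x\gg\vect 0_n$; likewise $f(\vect 1_n)=-D\vect 1_n\ll\vect 0_n$, so the trajectory from $\vect 1_n$ decreases to an equilibrium, and the two limits coincide because a strictly sublinear, cooperative, irreducible system has at most one positive equilibrium (the standard argument: scale one equilibrium into the other and use strict sublinearity to contradict minimality of the scaling factor). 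The identities $\delta_i\bar x_i=(1-\bar x_i)(B\bar x)_i$ then give $\vect 0_n\ll\bar x\ll\vect 1_n$. Given any $x(0)>\vect 0_n$ in $\mathcal H$, strong monotonicity makes $x(t_0)\gg\vect 0_n$ for some $t_0>0$, so $\epsilon v\leq x(t_0)\leq\vect 1_n$ for suitable $\epsilon$; monotone comparison squeezes $x(t)$ between $\phi_{t-t_0}(\epsilon v)\to\bar x$ and $\phi_{t-t_0}(\vect 1_n)\to\bar x$, giving global attraction to $\bar x$. Finally $Df(\bar x)=-D-\diag(B\bar x)+(I-\bar X)B$ is Metzler and irreducible and, using $f(\bar x)=\vect 0_n$, satisfies $Df(\bar x)\bar x=-\bar X B\bar x\ll\vect 0_n$, so $\sigma(Df(\bar x))<0$ and $\bar x$ is locally exponentially stable; combined with global attraction this yields exponential convergence from every $x(0)\neq\vect 0_n$.

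The step I expect to be the main obstacle is the boundary case $\rho(D^{-1}B)=1$: obtaining \emph{global} convergence to $\vect 0_n$ (rather than the mere generic convergence that monotonicity alone gives) needs the LaSalle argument together with a careful irreducibility/invariance claim about $\{\dot V=0\}$, and showing the convergence is \emph{not} exponential requires the centre-manifold reduction rather than any linear estimate. The uniqueness of $\bar x$ via strict sublinearity is the other technically delicate point, although it follows a well-established template.
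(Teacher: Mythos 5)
Your proposal is correct, but note that the paper does not prove this statement at all: Theorem~2.1 is presented as a summary of known results and is simply cited to the single-virus SIS literature (Lajmanovich--Yorke and its descendants). What you have written is essentially a reconstruction of the standard proof in those references --- forward invariance of the unit box via Nagumo, cooperativity plus irreducibility giving strong monotonicity, strict sublinearity $f(\lambda x)-\lambda f(x)=\lambda(1-\lambda)XBx$, Perron--Frobenius to exclude a positive equilibrium when $\rho(D^{-1}B)\leq 1$, the sub/supersolution sandwich $\epsilon v \leq x \leq \vect 1_n$ with sublinearity-based uniqueness when $\rho(D^{-1}B)>1$, and $Df(\bar x)\bar x=-\bar XB\bar x\ll\vect 0_n$ for local exponential stability of $\bar x$ --- and each of these steps checks out. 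The only place where your sketch is lighter than it should be is the claim that convergence is not exponential when $\rho(D^{-1}B)=1$: the centre-manifold reduction by itself leaves open whether a given interior trajectory might approach the origin along the stable manifold (and hence exponentially), so you would need to rule that out. A cleaner and fully elementary way is to use your own Lyapunov function: with $w\gg\vect 0_n$ the left null vector of $-D+B$, one has $\frac{d}{dt}(w^{\top}x)=-\sum_i w_i x_i(Bx)_i\geq -C\,(w^{\top}x)^2$ on the box for a suitable constant $C>0$, whence $w^{\top}x(t)\geq w^{\top}x(0)/(1+C\,w^{\top}x(0)\,t)$, so every trajectory with $x(0)>\vect 0_n$ decays at best like $1/t$. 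With that replacement (or an added argument that no interior point lies on the stable manifold), your proof is complete and matches the classical route the paper relies on.
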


The bounds on $x(t)$ reflect its physical interpretation as a vector of proper fractions, and, with one exception, all trajectories go to the same equilibrium irrespective of initial conditions, viz. the endemic equilibrium if it exists, or the healthy equilibrium otherwise. The exception occurs if an endemic equilibrium exists, but the initial condition lies at the healthy equilibrium. 

The literature often defines $\mathcal{R} = \rho(D^{-1}B)$ as the reproduction number of the SIS network. In epidemiology, the reproduction number is the expected number of secondary infections generated in a population of entirely susceptible individuals, by a single infectious individual. Conveniently, $\mathcal{R}$ for \eqref{eq:singlevirusunlabelled} identifies whether the virus will be eliminated or persist as $t\to\infty$, corresponding closely with the epidemiological definition~\cite{anderson1991_virusbook,hethcote2000mathematics}. We remark that \eqref{eq:singlevirusunlabelled} describes a deterministic system, while real-world virus propagation is a stochastic process. In fact, \eqref{eq:singlevirusunlabelled} can be viewed as the mean-field approximation of a stochastic SIS process. However, we do not provide further comment on this issue, which is beyond the scope of our focus, and instead refer the reader to established literature, e.g.~\cite{nowzari2016epidemics,li2012susceptible,van2015accuracy}.

\subsection{Dynamics of bivirus networks}

Moving from the single virus SIS model, we again assume there are $n$ populations but now two competing viruses may be present, called virus~1 and virus~2 for convenience, and there are three mutually exclusive health compartments, see Fig.~\ref{fig:transitions_bivirus}. Individuals may be susceptible, or infected with virus~1, or infected with virus~2, but cannot be infected with both viruses at the same time due to their competing nature. Individuals that recover from infection by either virus  becomes immediately susceptible to infection from either virus again.
Associated with virus~1 and virus~2 are two graphs, $\mathcal{G}^1 = (\mathcal{V},\mathcal{E}^1, B^1)$ and $\mathcal{G}^2 = (\mathcal{V},\mathcal{E}^2, B^2)$, respectively, which share the same node set but need not have the same edge set or edge weights for both viruses, see Fig.~\ref{fig:two_layer}. The nonnegative infection rates $\beta^1_{ij},\beta^2_{ij}$ capture the associated transmission rates, and are weights for the edges in the two graphs, so that $B^1=(\beta^1_{ij})$ and $B^2 = (\beta^2_{ij})$. Each population $i$ has associated with it positive healing rates $\delta ^1_i$ and $\delta^2_i$ for virus~1 and virus~2, respectively. Let $x^1_i(t),x^2_i(t)$ denote the fraction of individuals in population $i$ infected with virus~1 and virus~2 respectively. Because the viruses are competitive, the total fraction of individuals in population $i$ affected with either virus is $x_i^1(t)+x_i^2(t)$ and the fraction of susceptible individuals is $1-x_i^1(t)-x_i^2(t)$. Let $x^1(t),x^2(t)$ denote the associated vectors of fractions of infected individuals through the $n$ populations, and set $D^1={\rm{diag}}(\delta^1_1,\delta ^1_2,\dots,\delta^1_n)$ and $D^2$ similarly; set $X^1(t)={\rm{diag}}(x^1_1(t),x^1_2(t),\dots,x^1_n(t))$ and $X^2(t)$ similarly. Then the dynamical equations for the bivirus network system become
\begin{subequations}\label{eq:bivirus}
\begin{equation}\label{eq:bivirusa}
\dot x^1(t)=[-D^1+\big(I_n-X^1(t)-X^2(t)\big)B^1]x^1(t)
\end{equation}
\begin{equation}\label{eq:bivirusb}
\dot x^2(t)=[-D^2+\big(I_n-X^1(t)-X^2(t)\big)B^2]x^2(t)
\end{equation}
\end{subequations}

\begin{figure}
\centering
\subfloat[]{\def\svgwidth{0.5\linewidth}
\begingroup%
  \makeatletter%
  \providecommand\color[2][]{%
    \errmessage{(Inkscape) Color is used for the text in Inkscape, but the package 'color.sty' is not loaded}%
    \renewcommand\color[2][]{}%
  }%
  \providecommand\transparent[1]{%
    \errmessage{(Inkscape) Transparency is used (non-zero) for the text in Inkscape, but the package 'transparent.sty' is not loaded}%
    \renewcommand\transparent[1]{}%
  }%
  \providecommand\rotatebox[2]{#2}%
  \newcommand*\fsize{\dimexpr\f@size pt\relax}%
  \newcommand*\lineheight[1]{\fontsize{\fsize}{#1\fsize}\selectfont}%
  \ifx\svgwidth\undefined%
    \setlength{\unitlength}{236.77791415bp}%
    \ifx\svgscale\undefined%
      \relax%
    \else%
      \setlength{\unitlength}{\unitlength * \real{\svgscale}}%
    \fi%
  \else%
    \setlength{\unitlength}{\svgwidth}%
  \fi%
  \global\let\svgwidth\undefined%
  \global\let\svgscale\undefined%
  \makeatother%
  \begin{picture}(1,0.17715297)%
    \lineheight{1}%
    \setlength\tabcolsep{0pt}%
    \put(0.46967048,0.05791147){\color[rgb]{0,0,0}\makebox(0,0)[lt]{\lineheight{1.25}\smash{\begin{tabular}[t]{l}\Large $S$\end{tabular}}}}%
    \put(0.04684357,0.05791147){\color[rgb]{0,0,0}\makebox(0,0)[lt]{\lineheight{1.25}\smash{\begin{tabular}[t]{l}\Large $I^2$\end{tabular}}}}%
    \put(0.88493138,0.05791146){\color[rgb]{0,0,0}\makebox(0,0)[lt]{\lineheight{1.25}\smash{\begin{tabular}[t]{l}\Large $I^1$\end{tabular}}}}%
    \put(1.75201837,-0.33917583){\color[rgb]{0,0,0}\makebox(0,0)[lt]{\begin{minipage}{1.13731594\unitlength}\raggedright \end{minipage}}}%
    \put(0,0){\includegraphics[width=\unitlength,page=1]{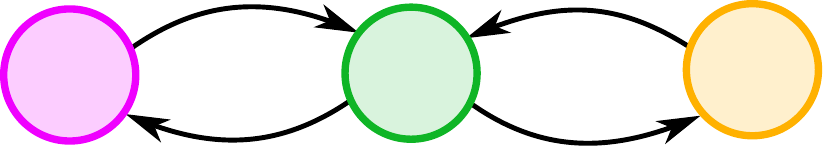}}%
  \end{picture}%
\endgroup%
\label{fig:transitions_bivirus}}
	\hfill
\subfloat[]{\def\svgwidth{0.5\linewidth}
\begingroup%
  \makeatletter%
  \providecommand\color[2][]{%
    \errmessage{(Inkscape) Color is used for the text in Inkscape, but the package 'color.sty' is not loaded}%
    \renewcommand\color[2][]{}%
  }%
  \providecommand\transparent[1]{%
    \errmessage{(Inkscape) Transparency is used (non-zero) for the text in Inkscape, but the package 'transparent.sty' is not loaded}%
    \renewcommand\transparent[1]{}%
  }%
  \providecommand\rotatebox[2]{#2}%
  \newcommand*\fsize{\dimexpr\f@size pt\relax}%
  \newcommand*\lineheight[1]{\fontsize{\fsize}{#1\fsize}\selectfont}%
  \ifx\svgwidth\undefined%
    \setlength{\unitlength}{341.3440063bp}%
    \ifx\svgscale\undefined%
      \relax%
    \else%
      \setlength{\unitlength}{\unitlength * \real{\svgscale}}%
    \fi%
  \else%
    \setlength{\unitlength}{\svgwidth}%
  \fi%
  \global\let\svgwidth\undefined%
  \global\let\svgscale\undefined%
  \makeatother%
  \begin{picture}(1,0.70048434)%
    \lineheight{1}%
    \setlength\tabcolsep{0pt}%
    \put(0,0){\includegraphics[width=\unitlength,page=1]{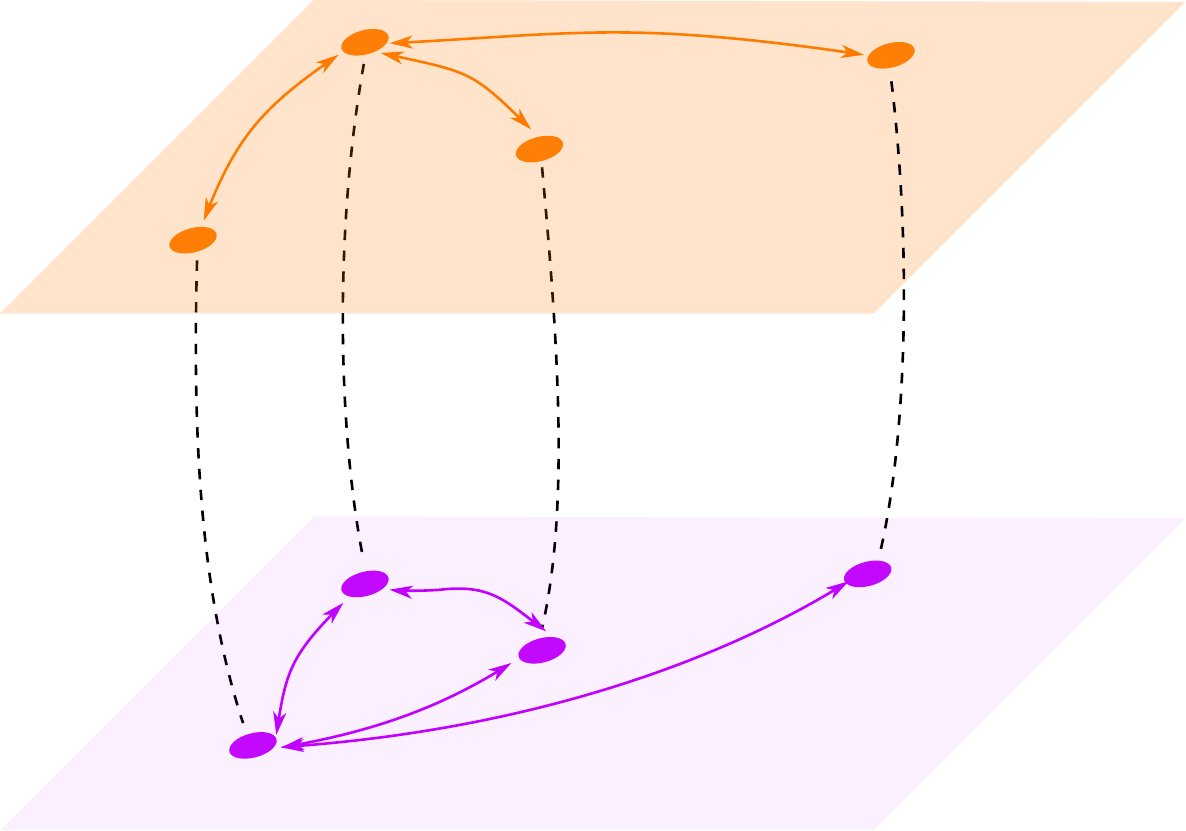}}%
    \put(0.78295643,0.61403911){\color[rgb]{0,0,0}\makebox(0,0)[lt]{\lineheight{1.25}\smash{\begin{tabular}[t]{l}$\mathcal G^1 = (\mathcal{V}, \mathcal{E}^1, B^1)$\end{tabular}}}}%
    \put(0.75765287,0.17483179){\color[rgb]{0,0,0}\makebox(0,0)[lt]{\lineheight{1.25}\smash{\begin{tabular}[t]{l}$\mathcal G^2 = (\mathcal{V}, \mathcal{E}^2, B^2)$\end{tabular}}}}%
  \end{picture}%
\endgroup%
\label{fig:two_layer}}
    \caption{Schematic of the compartment transitions and infection network. (a) Each individual exists in one of three health compartments: Susceptible ($S$), Infected with virus~$1$ ($I^1$, orange), or Infected with virus $2$, ($I^2$, purple). Arrows represent possible transition paths between health states. (b) The network through which the viruses can spread between populations (nodes) is captured by two graphs, $\mathcal{G}^1$ and $\mathcal{G}^2$. Note that the edge sets of the two graphs do not need to match, although the node sets are the same.    }
	\label{fig:epidemic_schematic}
\end{figure}

On physical grounds, the fractions of infected individuals should never move outside the interval $[0,1]$. In fact, that this property is captured by the model is one of the early results in \cite{liu2019bivirus}:

\begin{lemma}
With the above notation and sign constraints on the entries of $D^i$ and $B^i$, suppose that the initial conditions for \eqref{eq:bivirus} satisfy ${\bf{0}}_n\leq x^i(0)\leq {\bf{1}}_n$ for $i=1,2$ and $x^1(0)+x^2(0)\leq {\bf{1}}_n$. Then for all $t>0$, there holds ${\bf{0}}_n\leq x^i(t)\leq {\bf{1}}_n$ for $i=1,2$ and $x^1(t)+x^2(t)\leq {\bf{1}}_n$. 
\end{lemma}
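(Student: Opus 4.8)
The plan is to recast the claim as a positive invariance statement. Let
\[
\Xi \;=\; \bigl\{ (x^1,x^2)\in\mathbb R^n\times\mathbb R^n \ :\ x^1\ge\vect 0_n,\ x^2\ge\vect 0_n,\ x^1+x^2\le\vect 1_n \bigr\},
\]
a compact convex polytope, and observe that the hypotheses say exactly that $(x^1(0),x^2(0))\in\Xi$, while the conclusion says $(x^1(t),x^2(t))\in\Xi$ for all $t\ge0$. Since the right-hand side of \eqref{eq:bivirus} is polynomial in the state, it is locally Lipschitz, so a unique solution exists on a maximal forward interval; once we know the solution cannot leave the bounded set $\Xi$, that interval is all of $[0,\infty)$, so the statement (including the stated range $t>0$) is meaningful and it suffices to prove that $\Xi$ is forward invariant.

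For invariance I would invoke Nagumo's theorem: a closed set is forward invariant under a (locally Lipschitz) vector field if and only if at every boundary point the field lies in the tangent cone to the set. Because $\Xi$ is a convex polyhedron, its tangent cone at a point is the intersection of the half-space tangent cones of the finitely many constraints active there, so it is enough to verify, constraint by constraint, that the corresponding scalar derivative has the correct sign on the associated facet. Writing out the $i$-th components of \eqref{eq:bivirusa}--\eqref{eq:bivirusb} and using that $I_n-X^1-X^2$ is diagonal with entries $1-x^1_i-x^2_i$, the three facet types give: (i) on $\{x^1_i=0\}$, $\dot x^1_i = (1-x^2_i)\sum_j\beta^1_{ij}x^1_j \ge 0$, since $x^2_i\le x^1_i+x^2_i\le1$ and $\beta^1_{ij},x^1_j\ge0$ on $\Xi$; (ii) on $\{x^2_i=0\}$, symmetrically $\dot x^2_i = (1-x^1_i)\sum_j\beta^2_{ij}x^2_j \ge 0$; (iii) on $\{x^1_i+x^2_i=1\}$ the factor $1-x^1_i-x^2_i$ vanishes, so $\dot x^1_i=-\delta^1_i x^1_i$ and $\dot x^2_i=-\delta^2_i x^2_i$, whence $\tfrac{d}{dt}(x^1_i+x^2_i)=-\delta^1_i x^1_i-\delta^2_i x^2_i\le0$ because $\delta^1_i,\delta^2_i>0$ and $x^1_i,x^2_i\ge0$. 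In each case the field does not point out of the relevant half-space, which is precisely the tangent-cone condition for that constraint.

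The one point that needs care is boundary points where several of these constraints bind simultaneously --- edges, vertices, and lower-dimensional faces of $\Xi$ --- where we must be sure that satisfying each active constraint individually really does place the field in the full tangent cone. For a convex polytope this is automatic, since the tangent cone there is exactly the intersection of the individual half-space tangent cones; note also that tangency (the derivative being zero rather than strictly of the inward sign) is permitted, so no strict-inequality version is needed. If one prefers to sidestep Nagumo's theorem in this generality, an equivalent route is a perturbation/comparison argument --- replace the field by one pushed strictly into $\Xi$ on each facet by an $\varepsilon$-amount, apply strict invariance, and let $\varepsilon\downarrow0$ --- or a direct monotone-systems comparison using the single-virus bounds of \Cref{thm:singlevirus} on each coordinate. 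I expect this bookkeeping at multiply-active boundary points to be the only mild obstacle; the rest is a routine sign check.
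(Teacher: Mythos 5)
Your argument is correct. Note, however, that the paper itself offers no proof of this lemma: it is stated as an imported result, with the proof delegated to the citation \cite{liu2019bivirus}, so there is no in-paper argument to compare against. Your self-contained treatment -- viewing the claim as forward invariance of the polytope $\Xi_{2n}$, invoking local Lipschitz continuity for existence/uniqueness and boundedness for global forward existence, and verifying Nagumo's tangent-cone condition facet by facet -- is the standard route and is essentially the same style of boundary sign-check used in the cited reference (which phrases it via the behaviour at a first exit time rather than through Nagumo's theorem). Your three facet computations are right: on $\{x^1_i=0\}$ the derivative reduces to $(1-x^2_i)\sum_j \beta^1_{ij}x^1_j \ge 0$, symmetrically on $\{x^2_i=0\}$, and on $\{x^1_i+x^2_i=1\}$ the infection terms vanish so $\tfrac{d}{dt}(x^1_i+x^2_i)=-\delta^1_i x^1_i-\delta^2_i x^2_i\le 0$; and your observation that for a polyhedron the tangent cone at a point with several active constraints is exactly the intersection of the corresponding half-space cones correctly disposes of the edge/vertex cases, so the bookkeeping you flag as the only delicate point is indeed closed.
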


In the sequel, the term `region of interest' will be used to denote the set $$\Xi_{2n}=\{(x^1,x^2) : {\bf{0}}_n\leq x^i\leq {\bf{1}}_n, i=1,2 \text{ and } x^1+x^2\leq {\bf{1}}_n \}.$$

The prime interest in this paper is in the limiting behavior of the equations, and particularly the nature of the equilibria in the region of interest. The situation is certainly more complicated than in the single virus case. To make progress, we now state a standing assumption on the bivirus network, which summarizes the content above concerning assumptions on $D^i, B^i$, and embraces effectively the same assumptions imposed on $D, B$ below \eqref{eq:singlevirusunlabelled} in the single virus case.

\begin{assumption}\label{ass:constraints}
The matrices $D^i$ are positive definite and the matrices $B^i$ are irreducible.
\end{assumption}

Just as in the single virus case, such assumptions are standard in the bivirus network literature, see e.g.~\cite{liu2019bivirus,ye2021_bivirus,santos2015bi,sahneh2014competitive}. We remark that $B^1$ and $B^2$ are assumed separately irreducible, i.e. $\mathcal{G}^1$ and $\mathcal{G}^2$ are both strongly connected but may not share the same edge set or edge weights. 

We now introduce further standing assumptions on the matrices $(D^i)^{-1}B^i$ in order for the bivirus dynamics to be meaningful and interesting. 

\begin{assumption}\label{ass:unstablehealthy}
For $i=1,2$, there holds $\rho((D^i)^{-1}B^i)>1$
\end{assumption}

We briefly describe the reason for these assumptions, but refer the reader to \cite[Section~2.2]{ye2021_bivirus} or \cite[Theorems~1 and 2]{liu2019bivirus} for a detailed treatment and formal proofs. Briefly, if the above assumption fails, we are effectively back in a single virus situation, which has been well explored in the literature, and thus does not deserve further treatment here. For example, suppose the inequality fails for $i = 1$, i.e., $\rho((D^1)^{-1}B^1)\leq 1$. In this case, virus~1 becomes asymptotically extinct\footnote{Exponentially fast in fact if the inequality is strict.}, {\textit{irrespective of the presence of virus 2}}, i.e. $x^1(t)\rightarrow 0$ as $t\rightarrow\infty$. Indeed, presence of virus~2 simply results in a faster extinction for virus~1.

Once $x^1(t)$ has converged to $\vect 0_n$, unsurprisingly given the form of \eqref{eq:bivirusb}, the system behaves like the single virus system with only virus~2 present.
\begin{equation}\label{eq:univirus}
\dot x^2(t)=[-D^2+\big(I-X^2(t)\big)B^2]x(t)
\end{equation}

From \Cref{thm:singlevirus}, we either have $\lim_{t\to\infty }x^2(t) = \vect 0_n$ if $\rho((D^2)^{-1}B^2)\leq 1$ or $\lim_{t\to\infty }x^2(t) = \bar x^2$ if $\rho((D^2)^{-1}B^2)>1$, where $\bar x^2$ is the unique endemic equilibrium for \eqref{eq:univirus}.

In summary then, \Cref{ass:unstablehealthy} provides us the opportunity to study genuine bivirus dynamics, where the persistence and extinction of a virus is tied to the overall networked system, and not to the reproduction number defining it at the single virus level.

With \Cref{ass:unstablehealthy} in place, the equilibria of the bivirus system can all be characterized as one of three types, namely `healthy', `boundary' or `coexistence', as follows. 
\begin{enumerate}
    \item The healthy equilibrium is $(x^1 = \vect 0_n, x^2 = \vect 0_n)$, and it is unstable.
    \item There are precisely two equilibria where one virus is extinct and the other is present: $(\bar x^1, \vect 0_n)$ and $(\vect 0_n, \bar x^2)$, where $\bar x^1$ and $\bar x^2$ correspond to the unique endemic equilibria of the single virus system for virus~1 and virus~2, respectively. Because $(\bar x^1, \vect 0_n)$ and $(\vect 0_n, \bar x^2)$ are on the boundary of the region of interest $\Xi_{2n}$, we shall refer to them loosely as `boundary equilibria', thus being consistent with the literature~\cite{carlos2,ye2021_bivirus}. 
    \item Any other equilibrium $(\tilde x^1, \tilde x^2)$ (if it exists) is termed a coexistence equilibrium, as it must satisfy $\vect 0_n \ll \tilde x^i \ll \vect 1_n$ and $\tilde x^1 + \tilde x^2 \ll \vect 1_n$ (see \cite[Lemma~3.1]{ye2021_bivirus} for the details on the inequality conditions on coexistence equilibria).
\end{enumerate}

Our recent work in \cite{ye2021_bivirus} used monotone systems theory~\cite{smith1988monotone_survey} to establish the limiting dynamical behaviour of generic bivirus networks. Namely, from all initial conditions except possibly a set of measure zero, trajectories will converge to an equilibrium point. Limit cycles, if they exist, must be nonattracting and initial conditions with trajectories converging to a limit cycle correspond to the aforementioned set of measure zero. This limiting behavior closely parallels that of the single virus case, noted in \Cref{thm:singlevirus}, and to the best of our knowledge, no bivirus system has been demonstrated to have a limit cycle. Necessary and sufficient conditions for the boundary equilibria to be locally exponentially stable and unstable were presented in~\cite[Theorem~3.10]{ye2021_bivirus}. Sufficient conditions on the $D^i$ and $B^i$ have been identified yielding all the possible \textit{stability configurations} of the boundary equilibria, that is, conditions such that the resulting bivirus system has neither, one or two of the boundary equilibria being stable~\cite{carlos2,ye2021_bivirus,santos2015bi,santos2015bivirus_conference,janson2020networked}.

With convergence assured and the local stability properties of boundary equilibria fully characterised, key open questions, including our efforts in this work, center around describing the nature and stability properties of the coexistence equilibria and their number. This is a highly nontrivial challenge, partly owing to the network dynamics: the stability configuration of the boundary equilibria does not immediately establish the existence and stability properties (if any) of coexistence equilibria, without further assumptions and conditions on $D^i$ and $B^i$. For example~\cite{ye2021_bivirus}, $D^1 = D^2 = I_n$ and $B^1 > B^2$ is a \textit{sufficient} (but not necessary) condition for i) $(\bar x^1, \vect 0_n)$ to be locally stable and $(\vect 0_n, \bar x^2)$ to be unstable, and ii) no coexistence equilibria\footnote{It turns out this also guarantees that $(\bar x^1, \vect 0_n)$ is globally stable in $\Xi_{2n}\setminus \{(\vect 0_n, \vect 0_n),(\vect 0_n. \bar x^2)\}$.}. It is not clear whether the particular stability configuration of one stable and one unstable boundary equilibria implies the non-existence of coexistence equilibria, or whether non-existence is a consequence of the further constraint $B^1 > B^2$. The aim of this paper is to shed light on how the stability configurations of boundary equilibria help to determine the coexistence equilibria and their stability properties.

For future reference, we note that the Jacobian associated with \eqref{eq:bivirus} evaluated at $(\bar x^1,{\bf{0}}_n)$ is given by

\begin{equation}\label{eq:boundaryjacobian}
    J(\bar x^1,{\bf{0}}_n)=\begin{bmatrix}
    -D^1+(I_n-\bar X^1)B^1-{\rm{diag}}(B^1\bar x^1)&-{\rm{diag}}(B^1\bar x^1)\\
    0&-D^2+(I_n-\bar X^1)B^2
    \end{bmatrix}
\end{equation}
The 1-1 block of the Jacobian is stable, being the same as the Jacobian applying to the steady state (equilibrium) solution of the single virus problem \eqref{eq:singlevirusunlabelled} associated with the parameters $D^1,B^1$, while the 2-2 block may or may not be stable. The same is true \textit{mutatis mutandis} for $J({\bf{0}_n},\bar x^2)$.  One can even construct a special example where both boundary equilibria have 0 as the eigenvalue with largest real part~\cite{liu2019bivirus,ye2021_bivirus}. 

\subsection{A motivating example}\label{ssec:n4_example}

We now present an $n=4$ example bivirus system to motivate the need for tools to derive additional insight into the equilibria.

We set $D^1 = D^2 = I$, and
\begin{equation}
    B^1=\begin{bmatrix} 1.6&1& 0.001 & 0.001 \\ 1&1.6 & 0.001& 0.001 \\ 0.001 & 0.001 & 2.1&0.156\\0.001& 0.001 & 3.0659&1.1
    \end{bmatrix},
    \quad
    B^2=\begin{bmatrix} 2.1&0.156& 0.001 & 0.001 \\ 3.0659&1.1 & 0.001& 0.001 \\ 0.001 & 0.001 & 1&1.6\\0.001& 0.001 & 1.6&1
    \end{bmatrix}.
\end{equation}
As it turns out, there are four locally stable equilibria, of which two are boundary equilibria and two are co-existence equilibria. These equilibria can easily be revealed by, e.g., simulating a number of different initial conditions as recording their limiting points. With $(\tilde x^1, \tilde x^2)$ being an equilibrium, four equilibria are given up to four decimal points by
\begin{equation*}
    \left(\begin{bmatrix}
    0.6157\\
    0.6157\\
    0.5652\\
    0.7160
    \end{bmatrix},\vect 0_4\right)
    \left(\vect 0_4, \begin{bmatrix}
    0.5652\\
    0.7160\\
    0.6157\\
    0.6157
    \end{bmatrix}\right)
    \left(\begin{bmatrix}
    0.0111\\
    0.0065\\
    0.5540 \\
    0.7076
    \end{bmatrix}, 
     \begin{bmatrix}
    0.5540 \\
    0.7076 \\
    0.0111\\
    0.0065
    \end{bmatrix}\right)
      \left(\begin{bmatrix}
    0.6063\\
    0.6005\\
    0.0077\\
    0.0164\\
    \end{bmatrix}, 
     \begin{bmatrix}
    0.0077\\
    0.0164\\
    0.6063\\
    0.6005
    \end{bmatrix}\right)
\end{equation*}
Sample trajectories are given in \Cref{fig:example}.

\begin{figure*}[!htp]
\begin{minipage}{0.475\linewidth}
\centering
\subfloat[Boundary equilibrium with virus~1 endemic\label{fig:n4_boundary1}]{\includegraphics[width=\columnwidth]{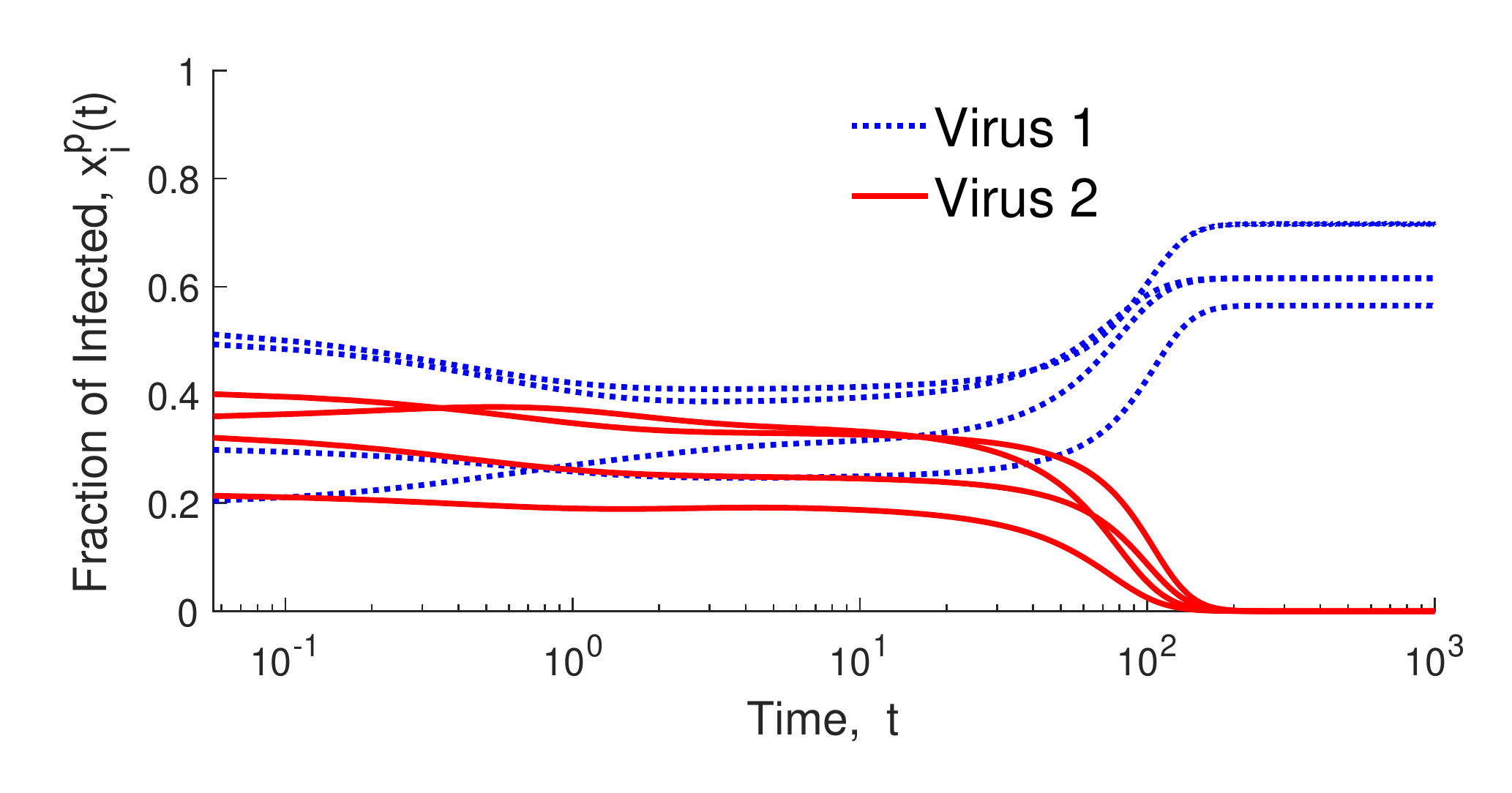}}
\end{minipage}
\hfill
\begin{minipage}{0.475\linewidth}
\centering\subfloat[Boundary equilibrium with virus~2 endemic]{\includegraphics[width=\columnwidth]{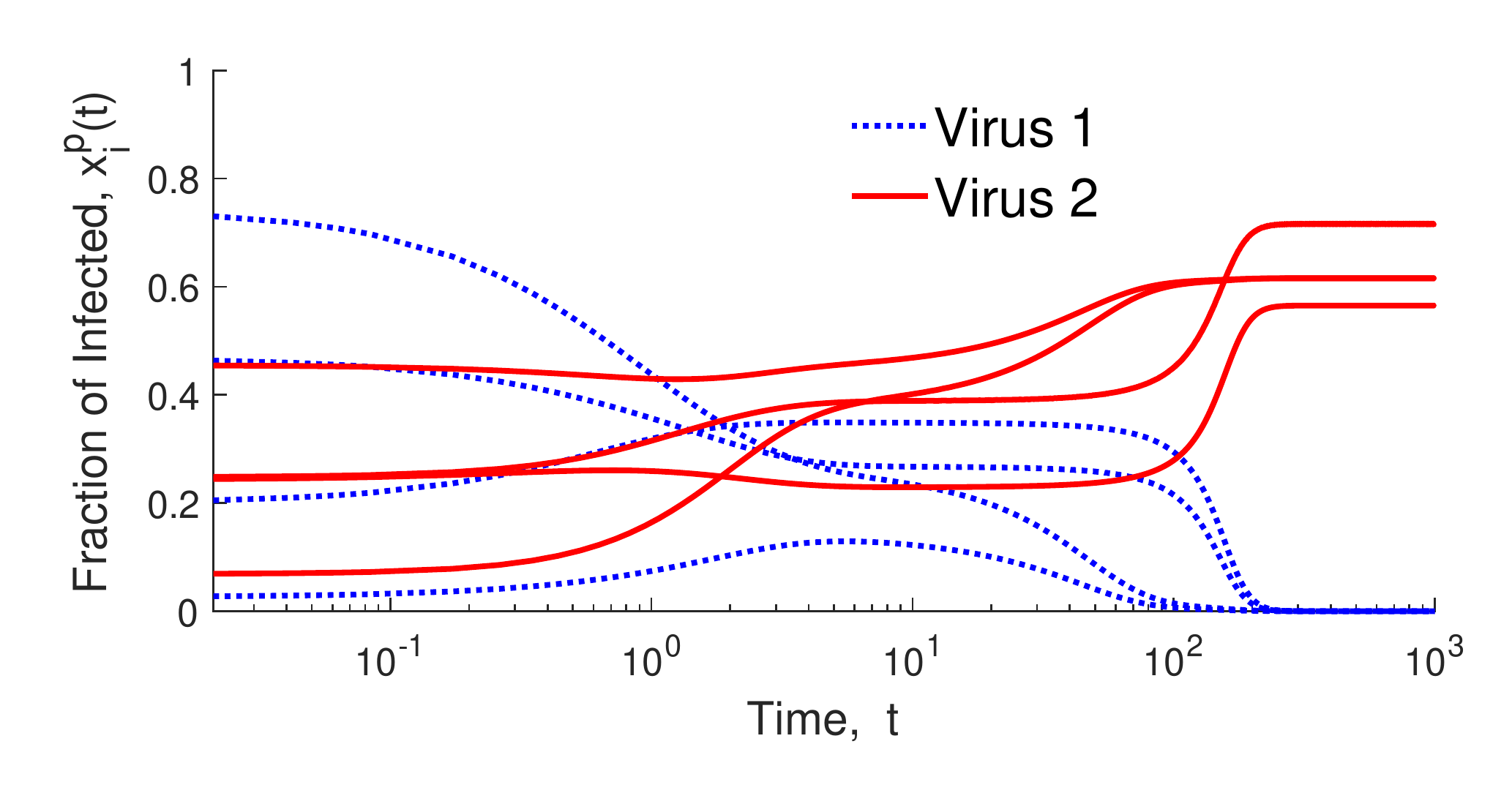}\label{fig:n4_boundary2}}
\end{minipage}
\vfill
\begin{minipage}{0.475\linewidth}
\centering
\subfloat[First attractive coexistence equilibrium]{\includegraphics[width=\columnwidth]{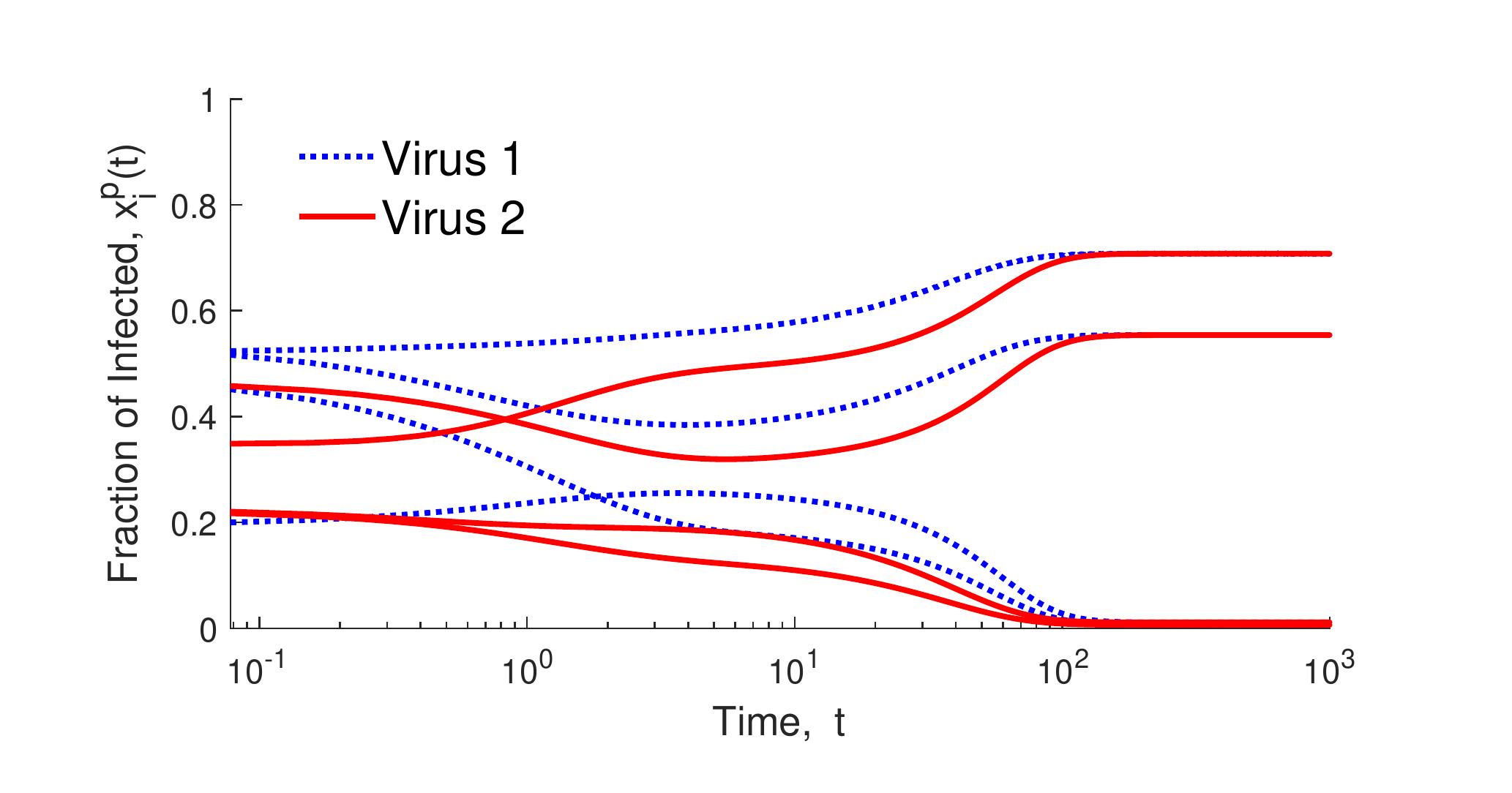}\label{fig:n4_coexistence1}}
\end{minipage}
\hfill
\begin{minipage}{0.475\linewidth}
\centering\subfloat[Second attractive coexistence equilibrium]{\includegraphics[width=\columnwidth]{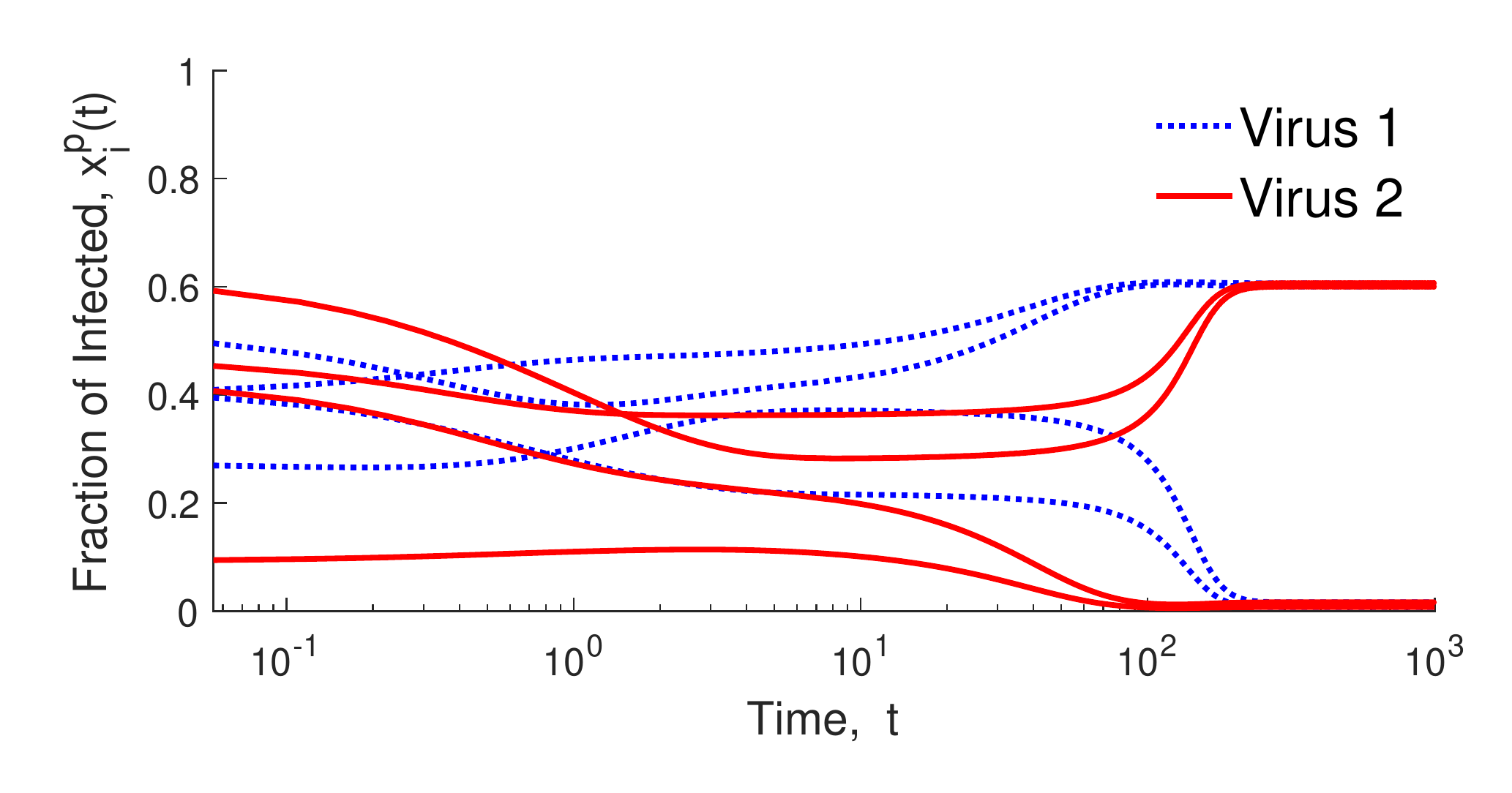}\label{fig:n4_coexistence2}}
\end{minipage}
\caption{Sample trajectories of the $n=4$ example, with different initial conditions. As evident, convergence occurs to four different attractive (locally exponentially stable) equilibria, dependent on the initial conditions. Two are boundary equilibria, where one virus is endemic and the other extinct. Two are coexistence equilibria, where both viruses infect a nonzero fraction of every population. }\label{fig:example}
\end{figure*}

With $n = 4$, analytically solving the equations in \eqref{eq:bivirus} to identify all equilibria in $\Xi_{2n}$ is nontrivial, but this can be achieved with the aid of numerical solvers (or in the case of this particular example, through a combined analytical-numerical approach as explained in \cref{sec:examples} below). The conclusion is that there are in fact five additional unstable coexistence equilibria (reported in \cref{sec:examples}). This numerical example highlights the fact that the equilibria patterns can be complex, even for a system of relatively low dimension, and strongly motivates the development of additional tools to study coexistence equilibria. Numerical solvers are certainly a viable tool for identifying equilibria within $\Xi_{2n}$ when given $(D^i, B^i)$ parameter matrices of modest dimensions as in the example above, but the effectiveness of such solvers is unclear for large-scale networks (where $n$ may be large). Analytical results that allow us to count the number of coexistence equilibria and even possibly determine the number of stable eigenvalues of the Jacobian matrix at the equilibrium, are desirable independent of numerical solver applicability. Such results can offer more general insights into the patterns of coexistence equilibria permissible for generic bivirus systems as opposed to specific numerical examples, and conceivably could be used to, e.g. develop methods for designing bivirus networks with pre-specified equilibrium configurations (see \cref{sec:examples} and additional preliminary results in our pre-print~\cite{ye2021_bivirus_outcomes}).

\subsection{Poincar\'e-Hopf Theorem}\label{ssec:ph_theory}

The version of the Poincar\'e-Hopf Theorem that we will use, which is drawn from \cite[see p. 35 and Lemma 4, p.37]{milnor1997topology}, is as follows (we explain the terminology in detail immediately below):

\begin{theorem}\label{thm:PH}
Consider a smooth vector field on a compact $m$-dimensional manifold $\mathcal M$, defined by the map $f:\mathcal M\rightarrow T\mathcal M$. If $\mathcal M$ is a manifold-with-boundary, then f must point outward at every point on the boundary, denoted by $\partial \mathcal M$. Suppose that every zero $x_k \in\mathcal M$ of $f$ is nondegenerate.\footnote{The most common statement of the Poincar\'e-Hopf Theorem hypothesizes that all zeros are isolated and makes no assumption of nondegeneracy of the vector field zeros  or, equivalently, nonsingularity of the Jacobian at the zero; nor does it involve the signs of the determinants of $df_{x_k}$. Our statement both imposes a tighter condition on the zeros, since nondegeneracy of a zero implies it is isolated, \cite[see p.139] {guillemin2010differential} and obtains a more precise result. Note further that the facts that $\mathcal M$ is compact and every zero is nondegenerate together ensure the number of zeros is finite, so there is no need for a separate explicit requirement for this property in the theorem hypothesis. It is also possible to provide a formulation where at a boundary, a trajectory points inward, rather than outward, but we elect to use what appears to be the much more common formulation of outward-pointing,} Then
\begin{equation}\label{eq:PHmainequation}
\sum_k{\rm{ind}}_{x_k}(f)=\sum_k {\rm{sign \; det}}(df_{x_k})=\chi(\mathcal M)
\end{equation}
\end{theorem}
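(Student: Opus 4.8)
Theorem~\ref{thm:PH} is the classical Poincar\'e--Hopf Theorem in its nondegenerate-zero form, so the plan is to reproduce the standard degree-theoretic proof, reducing \eqref{eq:PHmainequation} to three ingredients: (i) a local computation showing the index at each nondegenerate zero equals the sign of the determinant of the Jacobian there; (ii) the assertion that the total index $\sum_k \mathrm{ind}_{x_k}(f)$ is independent of the admissible vector field $f$; and (iii) the evaluation of this common value, for one well-chosen field, as $\chi(\mathcal M)$.

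For (i) I would pass to local coordinates centred at a zero $x_k$ and write $f(y) = df_{x_k}\,y + R(y)$ with $R(y) = o(|y|)$ and $df_{x_k}$ nonsingular. On a small enough sphere $S_\epsilon$ the straight-line homotopy from $f$ to its linearization avoids the origin, so $\mathrm{ind}_{x_k}(f)$ equals the Brouwer degree of $y \mapsto df_{x_k}y/|df_{x_k}y|$ as a self-map of $S^{m-1}$. Since $\mathrm{GL}(m,\mathbb R)$ has exactly two path-components, distinguished by the sign of the determinant, a matrix of positive determinant is homotopic through nonsingular matrices to the identity (degree $+1$) and one of negative determinant to a reflection (degree $-1$); hence $\mathrm{ind}_{x_k}(f) = \mathrm{sign}\,\mathrm{det}(df_{x_k})$, which is the first equality in \eqref{eq:PHmainequation}.

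For (ii), which is the core of the proof, I would follow Milnor~\cite{milnor1997topology}: embed $\mathcal M$ in some $\mathbb R^N$, let $\mathcal N_\epsilon$ be its closed $\epsilon$-tubular neighbourhood (a compact manifold-with-boundary) with nearest-point retraction $r:\mathcal N_\epsilon\to\mathcal M$, and extend $f$ to $\mathcal N_\epsilon$ via $\hat f(z) = (z-r(z)) + f(r(z))$. The normal summand $z-r(z)$ and the tangential summand $f(r(z))$ are orthogonal, so the zeros of $\hat f$ are exactly those of $f$, with the same indices; and because $f$ points outward on $\partial\mathcal M$, one verifies $\langle \hat f(z),\,z-r(z)\rangle \ge |z-r(z)|^2 > 0$ for $z\in\partial\mathcal N_\epsilon$, i.e.\ $\hat f$ points outward there. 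The Gauss-map argument (Milnor's Lemma~4, p.~37) then identifies $\sum_k\mathrm{ind}_{x_k}(\hat f)$ with the degree of the outward-unit-normal map $\partial\mathcal N_\epsilon\to S^{N-1}$, a number depending only on $\mathcal N_\epsilon$, hence only on $\mathcal M$; therefore the total index is a topological invariant of $\mathcal M$.

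Finally, for (iii), I would evaluate the total index on $f=\nabla g$ for a Morse function $g$ having no critical points on $\partial\mathcal M$ and with $\nabla g$ pointing outward there (modifying $g$ near a collar of $\partial\mathcal M$ if necessary). At a critical point of Morse index $\lambda$ the Hessian has exactly $\lambda$ negative eigenvalues, so by (i) its index is $(-1)^\lambda$; writing $c_\lambda$ for the number of index-$\lambda$ critical points and invoking Morse theory, $\sum_k\mathrm{ind}_{x_k}(\nabla g) = \sum_\lambda(-1)^\lambda c_\lambda = \sum_\lambda(-1)^\lambda\dim H_\lambda(\mathcal M) = \chi(\mathcal M)$, which finishes \eqref{eq:PHmainequation}. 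The step I expect to be the main obstacle is (ii): parts (i) and (iii) are routine once Brouwer degree and elementary Morse theory are in hand, whereas the independence of the total index from $f$ requires the ambient embedding, the tubular neighbourhood, and the careful check — exactly where the hypothesis that $f$ points outward on $\partial\mathcal M$ is used — that the extended field still points outward on the enlarged boundary.
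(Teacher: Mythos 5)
The paper does not prove Theorem~\ref{thm:PH} at all; it quotes it from Milnor \cite{milnor1997topology}, and your proposal is essentially the argument of that cited source: the local degree computation giving $\mathrm{ind}_{x_k}(f)=\mathrm{sign}\,\det(df_{x_k})$ (Milnor's Lemma~4), the tubular-neighbourhood extension plus Gauss-map degree showing the total index depends only on $\mathcal M$, and evaluation on a Morse gradient field to identify the common value with $\chi(\mathcal M)$. This is correct and follows the same route as the reference; the one point a full write-up should add is that when $\partial\mathcal M\neq\emptyset$ the neighbourhood $\mathcal N_\epsilon$ is not automatically smooth along points lying over $\partial\mathcal M$ (and the nearest-point retraction loses smoothness there), so its boundary must be smoothed, or the boundary case otherwise reduced to the Gauss-map lemma, before step (ii) applies verbatim.
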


This theorem statement of course uses language of topology, see e.g. \cite{lee2013introduction,guillemin2010differential}. We provide minimal decoding remarks here. One can think of $f$, a vector field, as the right side of a differential equation 
\begin{equation}\label{eq:general_system}
    \dot x=f(x).
\end{equation}
A point $x_k$ is said to be a zero of $f$ if $f(x_k) = \vect 0$, and thus a zero $x_k$ of $f$ is equivalent to $x_k$ being an equilibrium of \eqref{eq:general_system}. The symbol $T\mathcal M$ denotes the tangent space of the manifold $\mathcal M$. The term `manifold-with-boundary' does have a technical meaning. More precisely, a point $x\in \mathcal M\setminus \partial\mathcal M$ must have the property that there exists a neighborhood of $x$ in $\mathcal M$ that is diffeomorphic to a neighborhood of a point in $\mathbb R^m$, where $m$ is the dimension of $\mathcal M$. Meanwhile, a point $x\in \partial\mathcal M$ (for a manifold-with-boundary) must have a neighborhood in $\mathcal M$ which is diffeomorphic to a half space in $\mathbb R^m$~\cite{guillemin2010differential, lee2013introduction}. As a consequence, a region such as a square (including its edges and corners) in $\mathbb R^2$ would {\textit{not}} qualify, due to the corners not having the requisite property for their neighborhood. Nor could we work with the interior of a square, since it would fail the compactness requirement of the statement of \Cref{thm:PH}. The notion of pointing outward can be rigorously defined using the notion of a `tangent-cone'~\cite{blanchini1999set_invariance,ye2021applications}, but for our purposes the intuitive interpretation is adequate. The notation $df_{x_k}$ denotes the Jacobian of $f$ evaluated at $x_k$ and a nondegenerate zero is defined as one at which the Jacobian is nonsingular, see \cite[p.~37]{milnor1997topology}, with such a zero being necessarily isolated. (Separately, note that nondegeneracy is a property independent of the choice of coordinate system, see e.g. \cite[see p.~42]{matsumoto2002introduction}, where the independence of the sign of the Jacobian determinant is also demonstrated). The index of a vector field is the sum of the indices of its zeros, and the index of a zero of a vector field is a standard concept, see e.g. \cite[Section 3.5]{guillemin2010differential}. Even in as simple a manifold as  $\mathbb R^2$, any positive or negative integer value for the index at a zero is possible; however, with the nondegeneracy assumption, the only possibilities for an arbitrary manifold are $\pm 1$, see~\cite[p.~37, Lemma 4]{milnor1997topology} and \cite[p.~139]{guillemin2010differential}. The Euler characteristic of $\mathcal{M}$, denoted by $\chi(\mathcal{M})$, is determined by the shape of the manifold, and values are known for common manifolds, e.g. ball, sphere, torus, $\mathbb R^n$, etc. 

For future reference we note that if $n_k$ is the number of eigenvalues of the Jacobian $df_{x_k}$ with negative real part, then
\begin{equation}\label{eq:ph_sum}
    {\rm{sign\; det}}(df_{x_k})=(-1)^{n_k}
\end{equation}

Our aim is to use the above version of the Poincar\'e-Hopf Theorem in a bivirus setting. However, seeking to apply the Theorem by identifying the region of interest $\Xi_{2n}$ with $\mathcal M$ immediately creates some problems, including the following. 
\begin{enumerate}
    
    \item
    The theorem requires zeros of the vector field to be nondegenerate (which implies they are isolated, \cite[[see p. 139]{guillemin2010differential}. This implies that with $f$ denoting the vector field, if the manifold in question is bounded, the number of zeros of $f$ or equilibrium points of \eqref{eq:general_system} must be finite. It is clearly preferable for a property such as finiteness of the number of zeros, or nondegeneracy of each zero, to be demonstrated, rather than just assumed. Awkwardly however, for the bivirus dynamics in \eqref{eq:bivirus}, special networks have been identified with an infinite number of equilibria that form a segment of a line in $\Xi_{2n}$~\cite{liu2019bivirus,ye2021_bivirus}. 
    \item 
    The region of interest $\Xi_{2n}$ is not a manifold without boundary, nor a manifold-with-boundary. It is a compact region, and it does have a boundary in a set theory sense, but it fails to be a manifold-with-boundary, essentially because it has corners.  It can in fact be studied as a manifold-with-corners~\cite{lee2013introduction}, but a version of the Poincar\'e-Hopf Theorem for a manifold-with-corners does not appear to be available in the literature.
     \item
    The theorem requires the vector field defined by $f$ to be outward pointing on the boundary $\partial \mathcal M$.  If one believed that the theorem should apply to a manifold-with-corners such as $\Xi_{2n}$, this requirement would preclude the existence of any zeros of the vector field on the boundary, since the direction is simply not defined at a zero (and in an arbitrary neighborhood many directions occur, a fact which probably precludes a limiting argument). There are however three equilibria on the boundary of $\Xi_{2n}$, viz. the healthy equilibrium, and the two boundary equilibria.
    
\end{enumerate}

Our paper will address all three issues. The first issue is comparatively easy to deal with, as it turns out.  Our strategy for dealing with the second issue depends on two major steps; in the first, the region of interest $\Xi_{2n}$ will be perturbed with an enlargement so that any stable equilibrium on its boundary becomes an interior point of the perturbed region. Second, we will exhibit a diffeomorphic map from the interior of the modified region of interest to an even-dimensional sphere, less a single point; because this is a region to which the Poincar\'e-Hopf Theorem is obviously almost applicable, with some further massaging a result can be obtained for the bivirus system. To deal with the third issue, we will actually apply the Poincar\'e-Hopf Theorem to a modified manifold, viz. the $2n$-dimensional sphere, which has no boundary. There are then no trajectory directions that have to be checked.

\section{Main Result}\label{sec:main}

In this section, we return to the direct study of \eqref{eq:bivirus}. As remarked previously, use of Poincar\'e-Hopf theory presupposes that the number of equilibria, at least in $\Xi_{2n}$, of the differential equations (equivalently the zeros of the vector field $f$) is finite, and indeed that all zeros have a nondegeneracy property.  The first novel contribution of the paper establishes that such nondegeneracy is normally to be expected, using differential topology ideas for the proof. It is a fundamental precursor for the two main `counting' results of the paper. 

\begin{theorem}\label{thm:finiteness}
With notation as previously defined, consider the equation set \eqref{eq:bivirus} in the region $\Xi_{2n}$ and assume that \Cref{ass:constraints} and \Cref{ass:unstablehealthy} both hold. Then with any fixed $B^1,B^2$, and the exclusion of a set of values for the entries of $D^1,D^2$ of measure zero,  the number of equilibrium points (equivalently zeros of the associated vector field) is finite and each zero is nondegenerate. Similarly, with any fixed $D^1,D^2$, and the exclusion of a set of values for the entries of $B^1,B^2$ of measure zero, the same property for the equilibrium points is assured. 
\end{theorem}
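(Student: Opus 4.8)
The plan is to read the equilibria of \eqref{eq:bivirus} in $\Xi_{2n}$ as the zeros of the polynomial vector field $F=(F_1,F_2):\mathbb R^{2n}\to\mathbb R^{2n}$, with $F_1(x^1,x^2)=[-D^1+(I_n-X^1-X^2)B^1]x^1$ and $F_2$ defined analogously, and to regard the relevant parameters --- the positive diagonal entries of $D^1,D^2$ in the first assertion, or the positive entries of $B^1,B^2$ compatible with the fixed edge sets $\mathcal E^1,\mathcal E^2$ in the second --- as extra coordinates ranging over an open set $\Theta$ of a Euclidean space (irreducibility being an open condition, it is preserved under small perturbations of the edge weights). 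By \cite[Lemma~3.1]{ye2021_bivirus} every equilibrium in $\Xi_{2n}$ is either one of the three ``boundary'' equilibria $(\vect 0_n,\vect 0_n)$, $(\bar x^1,\vect 0_n)$, $(\vect 0_n,\bar x^2)$, or a coexistence equilibrium $(\tilde x^1,\tilde x^2)$ with $\vect 0_n\ll\tilde x^i$ and $\tilde x^1+\tilde x^2\ll\vect 1_n$, which therefore lies in the topological interior $\Xi_{2n}^{\circ}$. I would accordingly handle the (generically finitely many) interior equilibria by a parametric transversality argument and dispatch the three boundary equilibria by direct inspection.

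For the interior part, first I would form the master map $\widehat F:\Xi_{2n}^{\circ}\times\Theta\to\mathbb R^{2n}$, $\widehat F((x^1,x^2),\theta)=F(x^1,x^2)$, and verify that $\vect 0_{2n}$ is a regular value of $\widehat F$. The crucial observation is that the parameter-derivatives alone already span $\mathbb R^{2n}$ at any point of $F^{-1}(\vect 0_{2n})\cap\Xi_{2n}^{\circ}$: with $e_j$ the $j$-th standard basis vector of $\mathbb R^{2n}$, one computes $\partial\widehat F/\partial\delta^1_k=-x^1_k\,e_k$ and $\partial\widehat F/\partial\delta^2_k=-x^2_k\,e_{n+k}$, which form a basis of $\mathbb R^{2n}$ since $x^i_k>0$ at a coexistence equilibrium; likewise $\partial\widehat F/\partial\beta^1_{kj}=(1-x^1_k-x^2_k)\,x^1_j\,e_k$ and $\partial\widehat F/\partial\beta^2_{kj}=(1-x^1_k-x^2_k)\,x^2_j\,e_{n+k}$, which span $\mathbb R^{2n}$ because $1-x^1_k-x^2_k>0$ and, for each $k$, row $k$ of the irreducible $B^1$ (resp.\ $B^2$) carries at least one positive entry. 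Hence $\vect 0_{2n}$ is a regular value of $\widehat F$, and the parametric transversality theorem \cite[Sec.~2.3]{guillemin2010differential} yields that, for all $\theta\in\Theta$ outside a set of Lebesgue measure zero, $\vect 0_{2n}$ is a regular value of $F(\cdot,\theta)$; equivalently, every equilibrium of \eqref{eq:bivirus} in $\Xi_{2n}^{\circ}$ has nonsingular Jacobian, i.e.\ is nondegenerate.

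Next I would show that, after excluding a further measure-zero set, the three boundary equilibria are nondegenerate as well. Their Jacobians are block triangular: at $(\vect 0_n,\vect 0_n)$ the Jacobian is block-diagonal with diagonal blocks $-D^1+B^1$ and $-D^2+B^2$, while at $(\bar x^1,\vect 0_n)$ it is the matrix in \eqref{eq:boundaryjacobian}, whose $(1,1)$ block is Hurwitz --- hence nonsingular --- by \Cref{thm:singlevirus}, so nondegeneracy reduces to $\det\!\big(-D^2+(I_n-\bar X^1)B^2\big)\neq 0$, and the case $(\vect 0_n,\bar x^2)$ is symmetric. Each of the finitely many required conditions, namely $\det(-D^1+B^1)\neq 0$, $\det(-D^2+B^2)\neq 0$, $\det\!\big(-D^2+(I_n-\bar X^1)B^2\big)\neq 0$ and $\det\!\big(-D^1+(I_n-\bar X^2)B^1\big)\neq 0$, is the non-vanishing of a real-analytic function on the connected parameter domain that is not identically zero --- e.g.\ letting the relevant $D^i$ grow, or the relevant $B^i$ shrink to the zero matrix within the fixed edge-set cone, renders the matrix nonsingular --- so its zero set has measure zero; when $\bar X^1$ or $\bar X^2$ is involved one uses that the single-virus endemic equilibrium depends real-analytically on the parameters (its Jacobian being Hurwitz, by \Cref{thm:singlevirus}) together with Fubini's theorem to conclude that the exceptional set in the full parameter space is still measure zero. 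Off the finite union of all these exceptional sets, every equilibrium of \eqref{eq:bivirus} in $\Xi_{2n}$ is nondegenerate; nondegeneracy forces each zero to be isolated \cite[p.~139]{guillemin2010differential}, and since $\Xi_{2n}$ is compact and the equilibrium set is closed, it is finite.

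The hard part will be the second half --- fitting the boundary equilibria into the genericity statement. The interior transversality argument does not see them, and their nondegeneracy hinges on determinants containing $\bar X^1,\bar X^2$, which themselves vary with the parameters (most delicately in the ``vary $B^1,B^2$'' case); controlling this requires the real-analytic dependence of the single-virus endemic equilibrium on $D^i,B^i$ and a Fubini-type slicing. A secondary technical point is to keep the parametric transversality theorem on boundaryless manifolds, which is exactly why one works on the open set $\Xi_{2n}^{\circ}$ and treats $\partial\Xi_{2n}$ separately.
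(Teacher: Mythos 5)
Your proposal is correct and follows essentially the same route as the paper's own proof: a parametric transversality argument in which the parameter-derivatives (with respect to the diagonal entries of $D^i$, or the nonzero entries of $B^i$ together with nonsingularity of $I-X^1-X^2$) already give full row rank at interior equilibria, combined with direct inspection of the three boundary equilibria via the block-triangular Jacobians. Your handling of the boundary genericity (real-analytic nonvanishing plus Fubini-type slicing) is in fact spelled out more carefully than in the paper, which simply asserts the corresponding determinant conditions fail only on measure-zero sets.
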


This second claim of the theorem  can be established by appealing to ideas of algebraic geometry~\cite[Theorem~3.6]{ye2021_bivirus} and indeed a different proof can be found mixing manifold ideas and algebraic geometry ideas in~\cite{robbin1981algebraic}. We provide in \Cref{app:pf_thm_finiteness} a third proof avoiding algebraic geometry entirely, instead appealing to the Parametric Transversality Theorem in manifold theory, see e.g.~\cite[p.~145]{lee2013introduction} and \cite[p.~68]{guillemin2010differential}, and the proof also covers the first part of the theorem as well. Approaches relying on algebraic geometry require the system equations in \eqref{eq:bivirus} to be polynomial in the state variables, while the Parametric Transversality Theorem approach does not. The latter thus offers an advantage for extending analysis to bivirus network dynamics with non-polynomial terms, for instance if we were to consider introduction of feedback control~\cite{ye2021applications}, or certain small smooth variations to the right side of the differential equation, or even the larger variations provided in a modification to the quadratic terms suggested in \cite{yang2017bi}. The requirement that, when the $D^i$ are fixed,  the $B^i$ are excluded from a set of measure zero is indeed needed: as we mentioned in \Cref{ssec:ph_theory}, sets of specially structured $B^i$ exist for which there is a continuum of equilibria. 

Going along with the immediately preceding theorem is a strengthening of the nondegeneracy condition that we will need. Generically, zeros of the vector field are not just nondegenerate, but also hyperbolic. That is, the Jacobian matrix at a zero is free of eigenvalues with zero real part. 

\begin{theorem}\label{app:pf_thm_hyperbolicity}
Adopt the same hypothesis as Theorem~\ref{thm:finiteness}. With any fixed matrices $B^1,B^2$, and the exclusion of a set of values for the entries of $D^1,D^2$ of measure zero, the number of equilibrium points is finite and the associated vector field zero is hyperbolic. Similarly, with any fixed $D^1,D^2$, and the exclusion of a set of values for the entries of $B^1,B^2$ of measure zero, the same property for the equilibrium points is assured. 
\end{theorem}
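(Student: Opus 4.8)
By \Cref{thm:finiteness}, after excluding a measure-zero set of parameter values the vector field $f$ of \eqref{eq:bivirus} has finitely many zeros in $\Xi_{2n}$ and its Jacobian is nonsingular at each of them. Since a zero is hyperbolic precisely when its Jacobian has no eigenvalue on the imaginary axis, and nonsingularity already rules out the eigenvalue $0$, the plan is to exclude a further set of parameter values of measure zero on which some equilibrium carries a Jacobian eigenvalue $i\omega$ with $\omega>0$ (its conjugate $-i\omega$ then also being present); the union with the set excluded in \Cref{thm:finiteness} is finite, hence still of measure zero. I would split the argument along the three-fold classification of equilibria recalled above \eqref{eq:boundaryjacobian}.

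The healthy equilibrium and the two boundary equilibria I would treat directly. Their Jacobians have the block-triangular form exemplified by \eqref{eq:boundaryjacobian}, so the characteristic polynomial factors into those of diagonal blocks $-D^i+N^i$, where $N^i$ is $B^i$ (healthy case) or $(I_n-\bar X^j)B^i$ (boundary case); for a boundary equilibrium the single-virus block is already stable, so only the other block is in question, and $\bar x^j$ depends analytically on the varied parameters by the implicit function theorem, since the single-virus endemic equilibrium is exponentially stable, hence hyperbolic, by \Cref{thm:singlevirus}. Fixing $B^1,B^2$, the set of $(D^1,D^2)$ for which $-D^i+N^i$ has an eigenvalue $i\omega$ with $\omega\neq 0$, i.e.\ $\det(i\omega I_n+D^i-N^i)=0$ for some real $\omega\neq 0$, is (after eliminating $\omega$) contained in the zero locus of a real-analytic function of $(D^1,D^2)$ that is not identically zero --- it is violated, for instance, when each $D^i$ is a sufficiently small multiple of $I_n$ with generic multiplier, a choice that is admissible under \Cref{ass:unstablehealthy} --- and therefore has measure zero. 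The argument with $D^1,D^2$ fixed and $B^1,B^2$ varying is identical.

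For the coexistence equilibria, which satisfy $\vect{0}_n\ll\tilde x^i\ll\vect{1}_n$, I would mirror the Parametric Transversality Theorem argument used to prove \Cref{thm:finiteness}. Writing $\theta$ for the varied parameters and $f_\theta$, $J_\theta$ for the corresponding vector field and its Jacobian, consider
\begin{equation*}
\Phi(x,\omega,\theta)=\big(\,f_\theta(x),\ \det\big(J_\theta(x)-i\omega I_{2n}\big)\,\big)\in\mathbb{R}^{2n}\times\mathbb{C},
\end{equation*}
with $x$ ranging over the open set $\{x:x^1\gg\vect{0}_n,\ x^2\gg\vect{0}_n\}$, $\omega$ over $(0,\infty)$, and $\theta$ over the open set of admissible $(D^1,D^2)$; a zero of $\Phi_\theta:=\Phi(\cdot,\cdot,\theta)$ is exactly an interior equilibrium of \eqref{eq:bivirus} whose Jacobian has the eigenvalue $i\omega$. (To avoid any appeal to simplicity of eigenvalues, one may instead replace the last component by the real equations $J_\theta(x)a=-\omega b$, $J_\theta(x)b=\omega a$ together with normalisations $\lVert a\rVert^2+\lVert b\rVert^2=1$, $a^\top b=0$ for an adjoined real eigenvector $(a,b)$; the dimension count below is unchanged.) One then checks that $(\vect{0}_{2n},0)$ is a regular value of $\Phi$: surjectivity onto the $\mathbb{R}^{2n}$ factor is exactly as in \Cref{thm:finiteness}, because at an interior point $\partial f_\theta/\partial\theta$ is already surjective (this uses $x^1,x^2\gg\vect{0}_n$, and, when the $B^i$ are varied, also $x^1+x^2\ll\vect{1}_n$), while surjectivity onto the remaining $\mathbb{C}$ factor uses the extra free directions supplied by $\omega$ and by the state perturbation together with invertibility of $J_\theta(x)$ at a nondegenerate equilibrium. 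The Parametric Transversality Theorem then gives, for almost every admissible $\theta$, that $(\vect{0}_{2n},0)$ is a regular value of $\Phi_\theta$; but the domain of $\Phi_\theta$ has dimension $2n+1<2n+2=\dim(\mathbb{R}^{2n}\times\mathbb{C})$, so the preimage of a regular value must be empty (a surjective differential at a preimage point being impossible on dimensional grounds), i.e.\ $\Phi_\theta^{-1}(\vect{0}_{2n},0)=\emptyset$. Hence, generically, no coexistence equilibrium has a nonzero purely imaginary Jacobian eigenvalue. Combining the three cases and exchanging the roles of the $D^i$ and the $B^i$ gives the full statement.

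The step I expect to be the main obstacle is verifying that $(\vect{0}_{2n},0)$ is a regular value of the augmented map $\Phi$ --- equivalently, that the frequency/eigenvalue constraint is infinitesimally independent of the $2n$ equilibrium constraints, even though the $2n$ diagonal parameters are already consumed in fitting those equilibrium constraints. The way through is that $\omega$ and the state perturbation $dx$ provide additional free directions and $J_\theta(x)$ is invertible at a nondegenerate equilibrium, so a Schur-complement reduction turns surjectivity of $d\Phi$ into full-rankness of a small residual matrix; alternatively, in the eigenvector-augmented formulation it suffices to exhibit a single admissible parameter value together with an equilibrium whose Jacobian has no purely imaginary eigenvalue, which the healthy equilibrium with a generically chosen $\omega$ supplies. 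A lesser obstacle is the analytic dependence of the boundary equilibria on the parameters, handled via the implicit function theorem as noted above.
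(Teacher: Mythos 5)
Your overall strategy is genuinely different from the paper's: the paper does not run a transversality argument for hyperbolicity at all, but instead rules out, for all equilibria at once, the existence of \emph{any} pair of Jacobian eigenvalues summing to zero, by adjoining $\det(sI_{2n}-\nabla f(x))=0$ and $\det(sI_{2n}+\nabla f(x))=0$ to $f(x)=0$, observing this is an overdetermined polynomial system ($2n+2$ equations in $2n+1$ unknowns) with coefficients affine in the parameters, and invoking resultant theory: one explicit witness (both $B^i$ diagonal with distinct entries $>1$, for which all equilibria and eigenvalues are computed by hand, plus a check that there are no solutions at infinity) makes the resultant nonzero, hence nonzero for almost all parameters. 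That ``one witness implies genericity'' step is exactly what resultants buy you and is the reason the paper switches from the transversality toolkit of \Cref{thm:finiteness} to algebraic geometry here.

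Your proposal has a genuine gap precisely at the step you flag: regularity of the augmented map $\Phi$ at \emph{all} of its zeros is not established, and it can actually fail in the determinant formulation. If at some $(x,\omega,\theta)$ the eigenvalue $i\omega$ of $J_\theta(x)$ has geometric multiplicity at least two, then $\operatorname{adj}(J_\theta(x)-i\omega I_{2n})=0$, so the differential of the component $\det(J_\theta(x)-i\omega I_{2n})$ vanishes identically in every direction ($x$, $\omega$ and $\theta$ alike), and $(\vect 0_{2n},0)$ is not a regular value of the parametrized map; the Parametric Transversality Theorem then gives you nothing, and nothing in the bivirus structure rules such points out a priori. The eigenvector-augmented system avoids that particular degeneracy but shifts the burden to verifying surjectivity of the differential of a $(6n+2)$-dimensional system, including the eigenvector equations, with respect to $(x,\omega,a,b,\theta)$ --- this is the technical heart of the proof and is not carried out (the ``Schur-complement reduction'' is asserted, not done). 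Finally, your fallback --- that it ``suffices to exhibit a single admissible parameter value together with an equilibrium whose Jacobian has no purely imaginary eigenvalue'' --- is not valid in the transversality framework: a single good witness implies genericity only when the obstruction is the vanishing of a fixed polynomial (or globally defined analytic) function of the parameters, which is the resultant mechanism the paper sets up and you have not. (Your treatment of the healthy and boundary equilibria via block-triangularity of \eqref{eq:boundaryjacobian}, analytic dependence of $\bar x^i$ on the parameters and a not-identically-zero real-analytic discriminant is sound in outline, though the claimed witness $D^i=\varepsilon I_n$ is itself unverified; the coexistence case is where the argument as written does not go through.)
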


The proof of this theorem can be found in Appendix~\ref{sec:app b}. The first main result of the paper can now be stated as follows:

\begin{theorem}\label{thm:main}
With notation as previously defined, consider the equation set \eqref{eq:bivirus} and suppose that Assumptions \ref{ass:constraints} and \ref{ass:unstablehealthy} both hold. Suppose that the equilibria of \eqref{eq:bivirus} in the region of interest $\Xi_{2n}=\{{\bf{0}}_n\leq x^i\leq {\bf{1}}_n,  i=1,2 \}\cap\{ x^1+x^2\leq {\bf{1}}_n\}$ are all hyperbolic and thus finite in number. Excluding the healthy equilibrium $({\bf{0}}_n,{\bf{0}}_n)$ and any unstable boundary equilibrium, let $n_k$ denote the number of open left half plane eigenvalues of the Jacobian associated with the $k$-th equilibrium in  $\Xi_{2n}$. Then there holds 
\begin{equation}\label{eq:count}
\sum_k(-1)^{n_k}=1
\end{equation}
\end{theorem}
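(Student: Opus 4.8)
The plan is to reduce \eqref{eq:bivirus} to a setting where \Cref{thm:PH} applies cleanly, and then read \eqref{eq:count} off a known Euler characteristic. The region of interest $\Xi_{2n}$ itself will not do: it is only a manifold‑with‑corners and it carries the healthy equilibrium and both boundary equilibria on its boundary. So the first and main task is to replace it by a modified region $\Xi^\ast$ that (i) is compact with \emph{smooth} boundary and homeomorphic to a closed $2n$‑ball; (ii) carries no zeros of the vector field $f$ of \eqref{eq:bivirus} on $\partial\Xi^\ast$, with $f$ strictly inward‑pointing there; and (iii) has interior containing exactly the coexistence equilibria together with the \emph{stable} boundary equilibria. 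Granting such a $\Xi^\ast$, one can finish in two equivalent ways: apply \Cref{thm:PH} to the outward‑pointing field $-f$ on $\Xi^\ast$ directly, or — to avoid checking boundary behaviour at all — transport $f$ restricted to $(\Xi^\ast)^\circ$ through a diffeomorphism $(\Xi^\ast)^\circ\cong\mathbb R^{2n}\cong S^{2n}\setminus\{p\}$, extend it over the single missing point $p$ by a zero, and apply \Cref{thm:PH} to the closed manifold $S^{2n}$.

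To construct $\Xi^\ast$ I would modify $\Xi_{2n}$ locally near each of the three boundary equilibria and then smooth the corners. Around a stable boundary equilibrium (a hyperbolic sink) I would \emph{enlarge} $\Xi_{2n}$ by a small ball, making that equilibrium interior while keeping $f$ inward‑pointing on the new boundary piece. Around the healthy equilibrium and around each \emph{unstable} boundary equilibrium I would instead \emph{carve} a small neighbourhood \emph{out} of $\Xi_{2n}$: this is possible because \Cref{ass:unstablehealthy} and the Perron--Frobenius structure of the Jacobian blocks in \eqref{eq:boundaryjacobian} force the relevant linearized dynamics near those equilibria to be governed by Metzler matrices with strictly positive spectral abscissa — $\operatorname{diag}(-D^1+B^1,-D^2+B^2)$ at the healthy equilibrium (strictly positive by \Cref{ass:unstablehealthy}), and the transverse block $-D^2+(I_n-\bar X^1)B^2$ near an unstable $(\bar x^1,\mathbf{0}_n)$ off the invariant face $\{x^2=\mathbf{0}_n\}$, and symmetrically — so trajectories are pushed off those faces into the interior of $\Xi_{2n}$ and the carved‑out surface is strictly inward‑pointing. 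Finally I would smooth the corners of $\Xi_{2n}$ and the junctions between all these pieces; since every constituent piece is already strictly inward‑pointing this can be done without losing that property, and since $\Xi_{2n}$ is convex and the modifications are small and local, $\Xi^\ast$ remains a topological ball and no equilibrium is created or destroyed.

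The count is then arithmetic. By hypothesis (and \Cref{thm:finiteness} and \Cref{app:pf_thm_hyperbolicity}) the zeros of $f$ in $(\Xi^\ast)^\circ$ — exactly the coexistence equilibria and the stable boundary equilibria — are finite in number and hyperbolic, so by \eqref{eq:ph_sum} each contributes $(-1)^{n_k}$. Crucially $2n$ is even, so passing from $f$ to $-f$ flips no sign: $\operatorname{sign}\det(d(-f)_{x_k})=(-1)^{2n}(-1)^{n_k}=(-1)^{n_k}$; likewise transporting through a diffeomorphism, or pre‑multiplying $f$ by any positive scalar function (which may be needed so that the pushed‑forward field extends smoothly over $p$), preserves each index. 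On $S^{2n}$ the added point $p$ is an isolated nondegenerate zero of index $+1$, because near $p$ the transported field points radially outward — this is precisely the image of $f$ being strictly inward on $\partial\Xi^\ast$ with no boundary zeros, so the collapse of $\partial\Xi^\ast$ to $p$ is source‑like. Since $\chi(S^{2n})=1+(-1)^{2n}=2$, \Cref{thm:PH} gives $1+\sum_k(-1)^{n_k}=2$, i.e.\ \eqref{eq:count}; equivalently, applying \Cref{thm:PH} to $-f$ on $\Xi^\ast$ directly, $\sum_k(-1)^{n_k}=\chi(\Xi^\ast)=\chi(\bar B^{2n})=1$.

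I expect the main obstacle to be the construction of $\Xi^\ast$: arranging $f$ to be strictly inward on a single smooth closed boundary while simultaneously excluding the healthy equilibrium and the unstable boundary equilibria and capturing the stable ones — bearing in mind that all of these sit on the flow‑invariant faces $\{x^1=\mathbf{0}_n\}$, $\{x^2=\mathbf{0}_n\}$, where $f$ is merely tangent, and near the corners of $\Xi_{2n}$. One must verify that carving with strict transversality is possible exactly where the relevant transverse Metzler matrix has positive spectral abscissa (which is the defining condition for a boundary equilibrium to be unstable, and holds at the healthy equilibrium by \Cref{ass:unstablehealthy}, using genericity to rule out stable directions of the healthy equilibrium entering $\Xi_{2n}$), that enlargement disposes of the complementary, stable cases, and that these local prescriptions patch through the corners without creating spurious zeros. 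A secondary technical point is to confirm that after the transport to $S^{2n}$ and the positive rescaling the extension of the field across $p$ is genuinely smooth with an isolated nondegenerate zero of index $+1$.
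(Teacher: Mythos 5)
Your overall strategy (reduce to \Cref{thm:PH} on a ball or on $S^{2n}$, with the stable boundary equilibria swallowed by a local enlargement and the arithmetic $\chi(S^{2n})=2$, $\chi(\bar B^{2n})=1$) is sound, and the index bookkeeping ($-f$ versus $f$ in even dimension, invariance of indices under diffeomorphism, index $+1$ at the added point) is correct. The gap is exactly where you suspect it: the carving step cannot be carried out with $f$ strictly inward-pointing, and the reason is structural, not technical. An unstable boundary equilibrium $(\bar x^1,\mathbf{0}_n)$ is unstable only transversally; restricted to the invariant face $\{x^2=\mathbf{0}_n\}$ it is an \emph{attractor} (the 1--1 block of \eqref{eq:boundaryjacobian} is Hurwitz, and by \Cref{thm:singlevirus} the whole face, minus the origin, flows into it). Hence on any surface carved around it, at points lying in or arbitrarily near that face the trajectories run \emph{toward} the equilibrium, i.e.\ into the excised hole and out of $\Xi^\ast$, so strict inward-pointing fails there no matter how the surface is shaped; pushing $\partial\Xi^\ast$ off the face altogether does not help, since the field tangent to the face feeds the hole and nearby interior trajectories do the same by continuity. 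The property that $\sigma\bigl(-D^2+(I_n-\bar X^1)B^2\bigr)>0$ only yields the much weaker statement that no \emph{interior} trajectory converges to the equilibrium (the Perron--Frobenius left-eigenvector argument); it does not give transversal exit through every point of a small surrounding surface. The same overreach occurs at the healthy equilibrium: $\sigma(-D^i+B^i)>0$ for a Metzler matrix does not make the linear field point radially outward on a small sphere intersected with the nonnegative cone (e.g.\ $A=\begin{bmatrix}-10 & 1\\ 30 & -1\end{bmatrix}$ has $\sigma(A)>0$ yet $x^\top Ax<0$ at $x=(1,0)^\top$), so a spherical carve-out is not strictly transversal, and an isolating-block replacement would still collide with the invariant-face obstruction above.

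The paper's proof sidesteps precisely this difficulty by never asking for transversality along the faces: it only enlarges $\Xi_{2n}$ near a \emph{stable} boundary equilibrium (your step here matches the paper), leaves the invariant faces, the healthy equilibrium and any unstable boundary equilibrium on the boundary, maps the \emph{interior} $\tilde\Xi_{2n}^{\circ}$ diffeomorphically onto $S^{2n}\setminus N$ so that the entire boundary collapses to the north pole, and then uses the lemmas that no interior trajectory converges to the healthy equilibrium or to an unstable boundary equilibrium to conclude that the north pole is a single source of index $+1$; Poincar\'e--Hopf on $S^{2n}$ then gives $\sum_k(-1)^{n_k}+1=2$. If you want to rescue your route, you would have to replace "carve with strict transversality" by some such collapse-to-a-point (or isolating-block/degree-theoretic) argument that tolerates flow-invariant boundary portions containing equilibria with boundary-contained stable manifolds; as written, requirement (ii) for $\Xi^\ast$ is unattainable.
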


We note two other references relevant to this theorem. First, the appendix of a paper by Glass \cite{glass1975combinatorial} suggests in an equilibrium classification problem a modification of the Poincar\'e-Hopf formula applied to that problem obtained through similar exclusion of unstable vector field zeros on the boundary of a region of interest. The argument of that paper is more in outline form than provided in full detail, whereas we provide a rigorous treatment. Second, a paper of Hofbauer \cite{hofbauer1990index}, using methods of real analysis as much as topology, establishes an index theorem for a certain class of dynamical systems, into which the bivirus system can be shown to fit. The methods employed do not appear to lend themselves to straightforward extension to Morse-Smale inequalities. 

The proof of this theorem will be developed through some preliminary results, dealing with the three issues described earlier below \eqref{eq:ph_sum}. First, we focus on the behavior of the vector field on the boundary of $\Xi_{2n}$, and demonstrate that an appropriate perturbation of the region of interest will ensure the vector field `points inward' to $\Xi_{2n}$. It is convenient to look at three different types of boundary points of $\Xi_{2n}$:
\begin{enumerate}
    \item 
    Boundary points where $x^1_i+x^2_i=1$ for one or more $i$;
    \item
    Boundary points where $x^2_i=0$ for at least one but not all $i$ (with the same conclusions applying in respect of $x^1_i=0$ for some but not all $i$);
    \item
    Boundary points where $x^2={\bf{0}}_n$ (with the same conclusions applying to $x^1={\bf{0}}_n$), and the healthy equilibrium $(\bf{0}_n,\bf{0}_n)$. 
\end{enumerate}
Establishing this `inward pointing' property on $\Xi_{2n}$ at the aforementioned boundary points is essential for us to subsequently introduce a sphere as the manifold where we will apply the Poincar\'e-Hopf Theorem to the bivirus system. Once we are on the sphere, and with no boundary, the requirement in the Poincar\'e-Hopf Theorem on the vector field `pointing outwards' becomes irrelevant. 
 
 \subsection{Trajectories from the first two types of boundary point}\label{ssec:traj_01}
 The outcomes for the first two cases are summarised in the following two lemmas. 
 
 \begin{lemma}
 Consider a boundary point of $\Xi_{2n}$ where $x^1_i+x^2_i=1$ for some $i$. Then trajectories of \eqref{eq:bivirus} are inward-pointing.
 \end{lemma}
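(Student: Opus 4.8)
The plan is to show that at a boundary point where $x^1_i + x^2_i = 1$ for some index $i$, the vector field pushes the state strictly back into $\Xi_{2n}$, by examining the time derivative of the relevant constraint function. Concretely, fix such a boundary point $(x^1, x^2)$ and let $i$ be an index with $x^1_i + x^2_i = 1$; since we are in $\Xi_{2n}$, this forces $0 \le x^1_i \le 1$ and $0 \le x^2_i \le 1$ with the two summing to exactly $1$. Define $s_i = x^1_i + x^2_i$ and compute $\dot s_i = \dot x^1_i + \dot x^2_i$ using the $i$-th rows of \eqref{eq:bivirusa} and \eqref{eq:bivirusb}. The key observation is that the term $(I_n - X^1 - X^2)B^i$ has $i$-th row equal to $(1 - x^1_i - x^2_i)$ times the $i$-th row of $B^i$, which vanishes at this boundary point. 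Hence at such a point
\begin{equation*}
\dot s_i = -\delta^1_i x^1_i - \delta^2_i x^2_i \le -\min(\delta^1_i, \delta^2_i)\,(x^1_i + x^2_i) = -\min(\delta^1_i,\delta^2_i) < 0,
\end{equation*}
where strict negativity uses that $D^1, D^2$ are positive definite (\Cref{ass:constraints}) and $x^1_i + x^2_i = 1 \ne 0$. This shows the constraint $x^1_i + x^2_i \le 1$ is being strictly enforced: the trajectory moves into the interior of the half-space $\{x^1_i + x^2_i \le 1\}$.

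Next I would argue that this pointwise computation gives the ``inward-pointing'' property in the sense required for the subsequent sphere construction. At a boundary point of $\Xi_{2n}$, the outward normals consist (in the convex-polytope sense) of the active constraint gradients; the constraint $x^1_i + x^2_i \le 1$ being active, its gradient $e_i \oplus e_i$ (the vector with $1$ in the $i$-th slot of each block) is an outward normal direction, and the computation above shows $f(x^1,x^2)$ has strictly negative inner product with it. If several constraints of type (i) are active simultaneously (i.e. $x^1_j + x^2_j = 1$ for $j$ in some set), the same computation applies to each such $j$ independently, so the vector field has strictly negative inner product with every active outward normal of this type. Combined with the analogous lemmas for the other two boundary types (treated in the companion lemmas), this yields that the field points strictly into $\Xi_{2n}$, modulo the equilibria on the boundary handled separately in the third case.

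One small subtlety worth spelling out: at a boundary point where $x^1_i + x^2_i = 1$ we should also check consistency with the other constraints so that moving inward in the $s_i$ direction does not instantly violate, say, $x^1_i \ge 0$; but since $\dot s_i < 0$ simply decreases the sum, and the individual nonnegativity constraints $x^1_i \ge 0$, $x^2_i \ge 0$ are handled by the next lemma (and are not active here unless additionally $x^1_i = 0$ or $x^2_i = 0$, a corner case covered by combining the lemmas), there is no conflict. I would therefore keep this proof short, confined to the $\dot s_i$ computation and the remark that the $i$-th row of $(I_n - X^1 - X^2)B^i$ vanishes on this face.

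I do not expect a genuine obstacle here — this is the ``comparatively easy'' part flagged in the introduction. The only thing requiring mild care is the bookkeeping of what ``inward-pointing'' means at a non-smooth boundary point of the polytope $\Xi_{2n}$ (a point lying on the intersection of several faces), and ensuring the statement is phrased so that it composes cleanly with the other two lemmas when we later build the diffeomorphism to the sphere; but the core inequality $\dot s_i \le -\min(\delta^1_i,\delta^2_i)(x^1_i+x^2_i) < 0$ is immediate from the structure of \eqref{eq:bivirus} and positive-definiteness of the $D^i$.
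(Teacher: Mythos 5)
Your proof is correct and takes essentially the same route as the paper: both rest on the observation that the factor $(1-x^1_i-x^2_i)$ kills the infection term in the $i$-th row, so $\dot x^1_i=-\delta^1_i x^1_i$, $\dot x^2_i=-\delta^2_i x^2_i$, hence $\dot s_i<0$ and the trajectory re-enters the half-space $\{x^1_i+x^2_i\le 1\}$. Your extra remarks on outward normals and simultaneously active constraints are fine but not needed; the paper's proof is just the two-line derivative computation.
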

 
 \begin{proof}
 Suppose $x^1_i+x^2_i=1$. The differential equation for $x^1_i$ then immediately yields $\dot x^1_i=-\delta^1_i x^1_i$ and similarly for $x^2_i$, and the claim is immediate. 
 \end{proof}

 \begin{lemma}
 Suppose that $x^1(0)+x^2(0)\ll {\bf{1}}_n$ and that (with inessential reordering if necessary), there holds $x^2_1(0)=0, x^2_2(0)=0,\dots, x^2_p(0)=0,x^2_{p+1}(0)>0,\dots,x^2_{n}(0)>0$ for some $0<p<n$. Then for some $\bar i\in\{1,2,\dots,p \}$, there holds $\dot{\bar x}^2_i>0$, and at an arbitrarily small time $t_p>0$, trajectories obey $\dot x^2(t_p)\gg {\bf 0}_n$.  
 \end{lemma}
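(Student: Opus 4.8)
The plan is to analyze the differential equation \eqref{eq:bivirusb} for the components $x^2_1,\dots,x^2_p$ that vanish at $t=0$, and show that at least one of their derivatives is strictly positive at $t=0$; then a continuity/propagation argument spreads positivity to all components in arbitrarily short time. First I would write out, at $t=0$, the $i$-th component of \eqref{eq:bivirusb} for $i\in\{1,\dots,p\}$: since $x^2_i(0)=0$, the term $-\delta^2_i x^2_i$ vanishes and also $-(x^1_i+x^2_i)(B^2 x^2)_i$ is merely $-x^1_i (B^2x^2)_i$, so
\begin{equation*}
\dot x^2_i(0) = \big(1-x^1_i(0)\big)\,(B^2 x^2(0))_i = \big(1-x^1_i(0)\big)\sum_{j=p+1}^{n}\beta^2_{ij} x^2_j(0).
\end{equation*}
Here I have used $x^2_j(0)=0$ for $j\le p$. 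The factor $1-x^1_i(0)>0$ because $x^1(0)+x^2(0)\ll \vect 1_n$ gives $x^1_i(0)<1$. Hence $\dot x^2_i(0)\ge 0$ for every $i\le p$, with equality precisely when $\beta^2_{ij}=0$ for all $j>p$, i.e. when node $i$ receives no incoming edge in $\mathcal G^2$ from the set $\{p+1,\dots,n\}$.

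The key step is to rule out the possibility that $\dot x^2_i(0)=0$ for \emph{all} $i\le p$. Suppose it were so; then the set $S=\{1,\dots,p\}$ would have no incoming edges in $\mathcal G^2$ from $\{p+1,\dots,n\}=\mathcal V\setminus S$, since $x^2_j(0)>0$ for those $j$ forces each $\beta^2_{ij}=0$. But $S$ is a nonempty proper subset of $\mathcal V$ (as $0<p<n$), so having no incoming edges from its complement contradicts strong connectivity of $\mathcal G^2$, equivalently irreducibility of $B^2$ (\Cref{ass:constraints}). Therefore there exists $\bar i\in\{1,\dots,p\}$ with $\dot x^2_{\bar i}(0)>0$. (An equivalent phrasing: apply the Perron--Frobenius consequence quoted in \Cref{ssec:background}, that for irreducible nonnegative $B^2$, if $B^2 y = z$ with $y=x^2(0)>\vect 0_n$, then $z$ cannot have a zero in every position where $y$ is zero; here $y$ has zeros exactly on $S$, so $z$ has a nonzero entry somewhere on $S$.)

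For the second conclusion — that $\dot x^2(t_p)\gg \vect 0_n$ for arbitrarily small $t_p>0$ — I would argue as follows. For $t>0$ small, $x^2_{\bar i}(t)>0$ by the strict inequality just established, while $x^2_j(t)>0$ for $j>p$ persists by continuity (since these components start strictly positive), and $x^1(t)+x^2(t)\ll\vect 1_n$ persists by continuity as well. Thus for small $t>0$ the vector $x^2(t)$ is strictly positive on the set $\{\bar i\}\cup\{p+1,\dots,n\}$. Writing the $i$-th component of \eqref{eq:bivirusb} as $\dot x^2_i=(1-x^1_i-x^2_i)(B^2 x^2)_i-\delta^2_i x^2_i$, I observe that whenever $x^2_i(t)=0$ one has $\dot x^2_i(t)=(1-x^1_i)(B^2x^2)_i\ge 0$, and it is strictly positive as soon as $x^2$ is positive on some in-neighbour of $i$ in $\mathcal G^2$; hence once positive, a component cannot return to zero, and positivity diffuses along directed paths of $\mathcal G^2$. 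Since $\mathcal G^2$ is strongly connected, after an arbitrarily short interval every component of $x^2$ is strictly positive; invariance of $\Xi_{2n}$ (the preceding Lemma) keeps the trajectory in the region, so $x^1_i+x^2_i<1$ for all $i$ at such times, and then $\dot x^2_i = (1-x^1_i-x^2_i)(B^2x^2)_i-\delta^2_i x^2_i$ evaluated with $x^2\gg\vect 0_n$ need not itself be positive — so here I would instead conclude directly that $x^2(t_p)\gg\vect 0_n$, and it is this strict positivity of the \emph{state} (together with $x^1+x^2\ll\vect 1_n$) that the subsequent development uses; if the literal statement "$\dot x^2(t_p)\gg\vect 0_n$" is wanted, it holds at the first time positivity is achieved componentwise by the same in-neighbour argument, reading $\dot x^2_i(t_p)=(1-x^1_i)(B^2x^2)_i>0$ at whichever time $t_p$ the $i$-th component is still at (or just leaving) zero.

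The main obstacle is the second half: carefully tracking that once a component becomes positive it stays positive, and that positivity propagates through the whole graph in arbitrarily small time, all while staying inside $\Xi_{2n}$. This is a standard ``irreducibility forces immediate positivity'' argument for cooperative-type subsystems, but it must be run on the $x^2$ subsystem with $x^1$ treated as a (continuous, bounded) coefficient, and one must be slightly careful about the precise meaning of ``$\dot x^2(t_p)\gg\vect 0_n$'' versus ``$x^2(t_p)\gg\vect 0_n$'' — I expect the paper intends the statement as a shorthand for the state having become strictly positive, which is what drives the later homeomorphism construction.
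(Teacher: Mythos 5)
Your proof follows essentially the same route as the paper: irreducibility of $B^2$ (equivalently strong connectivity of $\mathcal G^2$), combined with $1-x^1_i(0)-x^2_i(0)>0$, forces $\dot x^2_{\bar i}(0)>0$ for some $\bar i\le p$, and then iterating/propagating this positivity shows every component of $x^2$ becomes strictly positive within an arbitrarily small time, exactly as in the paper's cascade argument with times $t_1,\dots,t_p$. Your caveat about the literal claim $\dot x^2(t_p)\gg\vect 0_n$ is well taken but does not constitute a gap: the paper's own proof in fact establishes that $x^2(t_p)$ has no zero entries (the ``inward pointing'' property used in the sequel), and the derivative phrasing in the statement should be read in that sense.
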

 
 \begin{proof}
 Since the matrix $B^2$ is irreducible, and $I-X^1(0)-X^2(0)$ is nonsingular, the matrix $[I-X^1(0)-X^2(0)]B^2$ is also irreducible.  This means that the vector $[I-X^1(0)-X^2(0)]B^2x^2(0)$ cannot have a zero in every position where $x^2(0)$ has a zero, i.e. for one or more $i\in\{1,2,\dots,p\}$, say $i=\bar i$, there holds $[\big (I-X^1(0)-X^2(0)\big )B^2x^2(0)]_{\bar i}>0$, whence also from \eqref{eq:bivirus}, there holds $\dot x^2_{\bar i}>0$. Note that for all other $i\in\{1,2,\dots,p\}$, there necessarily holds $\dot x^2_{ i}(0)=[[I-X^1(0)-X^2(0)]B^2x^2(0)]_i \geq 0$.

 As a consequence of the first part of the lemma, we see that for a time $t_1$ that is arbitrarily small and positive, fewer than $p$ entries of $x^2(t_1)$ will be zero. And then for a time $t_2$ for which $t_2-t_1$ is arbitrarily small and positive, fewer than $p-1$ entries will be zero. Continuing the argument, there exists an arbitrarily small but positive time $t_p$ for which $\dot x^2(t_p)\gg {\bf 0}_n$, which is equivalent to saying that trajectories are inward pointing.  \hfill$\qed$
 \end{proof}

\subsection{Trajectories from the third type of boundary point}\label{ssec:traj_02}

We now consider boundary points where $x^2={\bf{0}}_n$ (with any conclusions drawn also applying to boundary points where $x^1 = {\bf 0}_n$). As explained below \Cref{ass:unstablehealthy}, for any initial condition for which $x^2(0)={\bf{0}}_n$, there will hold $x^2(t)={\bf{0}}_n$, for all $t$. Thus trajectories are neither inward or outward pointing with respect to $\Xi_n$, but remain on the boundary. Further, on the boundary where $x^2={\bf{0}}_n$, there are precisely two equilibria points, viz. the boundary equilibrium $(\bar x^1, {\bf 0}_n)$ with $\bar x^1\gg {\bf 0}_n$ and the healthy equilibrium $({\bf 0}_n, {\bf 0}_n)$. Below we consider first the case where this is a stable equilibrium (all eigenvalues of the Jacobian have negative real part), and subsequently the case where it is unstable (one or more eigenvalues of the Jacobian have positive real part). Following this, we treat the case of the healthy equilibrium, which is unstable by Assumption \ref{ass:unstablehealthy}.

\subsubsection{Perturbation of the region of interest around a stable boundary equilibrium} \label{sssec:stable_boundary}
With the boundary equilibrium $(\bar x^1, {\bf 0}_n)$ locally exponentially stable (as an equilibrium of \eqref{eq:bivirus}), we shall explain how to make a perturbation of the boundary of $\Xi_{2n}$ in the vicinity of $(\bar x^1, {\bf 0}_n)$, defined by a hemisphere extending outwards from the boundary, and joined smoothly to the boundary by a $C^{\infty}$ bump function \cite[see p.127]{tu2011introduction}.

\begin{figure}
    \centering
    \def\svgwidth{0.9\linewidth}
\begingroup%
  \makeatletter%
  \providecommand\color[2][]{%
    \errmessage{(Inkscape) Color is used for the text in Inkscape, but the package 'color.sty' is not loaded}%
    \renewcommand\color[2][]{}%
  }%
  \providecommand\transparent[1]{%
    \errmessage{(Inkscape) Transparency is used (non-zero) for the text in Inkscape, but the package 'transparent.sty' is not loaded}%
    \renewcommand\transparent[1]{}%
  }%
  \providecommand\rotatebox[2]{#2}%
  \newcommand*\fsize{\dimexpr\f@size pt\relax}%
  \newcommand*\lineheight[1]{\fontsize{\fsize}{#1\fsize}\selectfont}%
  \ifx\svgwidth\undefined%
    \setlength{\unitlength}{598.65358193bp}%
    \ifx\svgscale\undefined%
      \relax%
    \else%
      \setlength{\unitlength}{\unitlength * \real{\svgscale}}%
    \fi%
  \else%
    \setlength{\unitlength}{\svgwidth}%
  \fi%
  \global\let\svgwidth\undefined%
  \global\let\svgscale\undefined%
  \makeatother%
  \begin{picture}(1,0.46273708)%
    \lineheight{1}%
    \setlength\tabcolsep{0pt}%
    \put(0.51413119,0.35961556){\color[rgb]{0,0,0}\makebox(0,0)[lt]{\begin{minipage}{0.27114415\unitlength}\raggedright \end{minipage}}}%
    \put(0,0){\includegraphics[width=\unitlength,page=1]{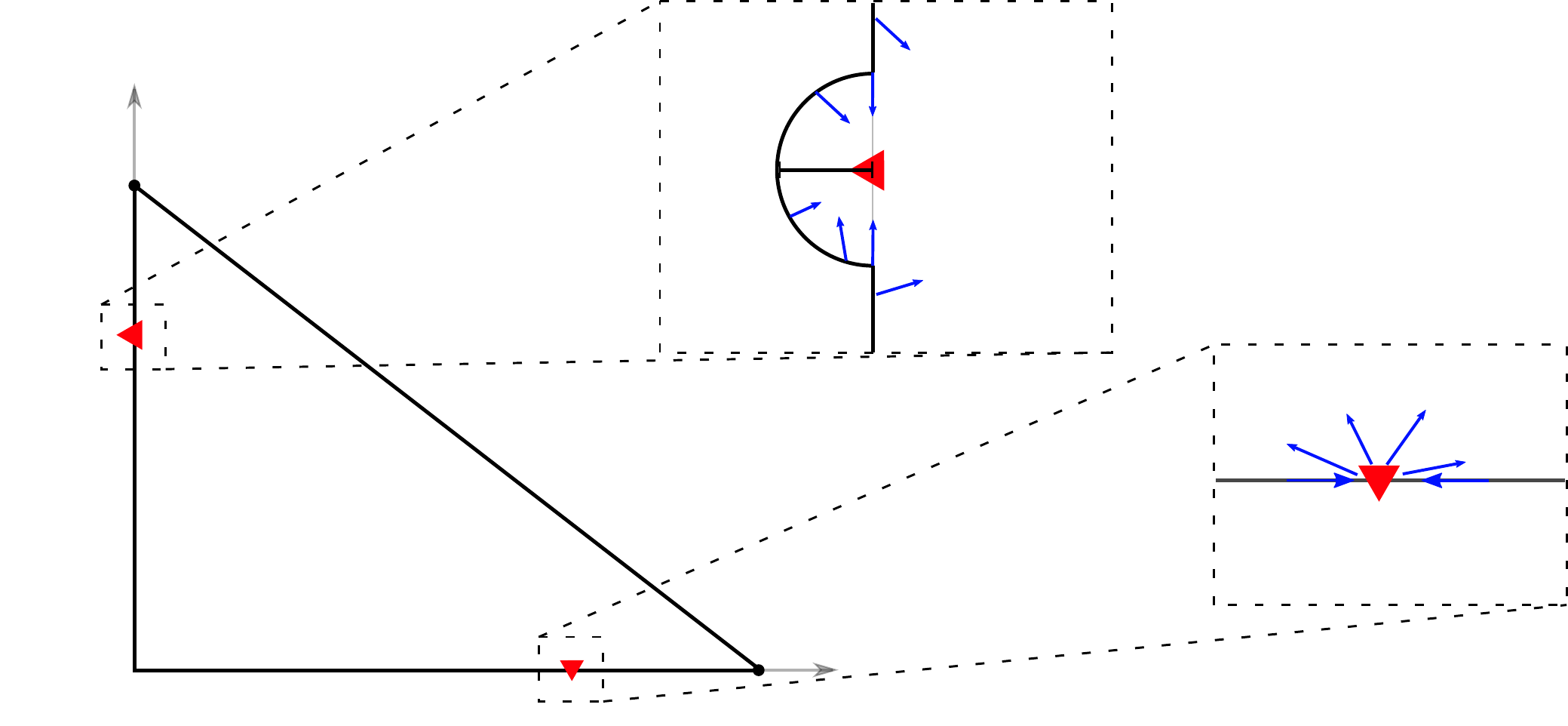}}%
    \put(0.51458432,0.35856971){\makebox(0,0)[lt]{\lineheight{1.25}\smash{\begin{tabular}[t]{l}$\epsilon$\end{tabular}}}}%
    \put(0.09399196,0.39809478){\color[rgb]{0,0,0}\makebox(0,0)[lt]{\lineheight{1.25}\smash{\begin{tabular}[t]{l}$x^1$\end{tabular}}}}%
    \put(-0.00151707,0.34432631){\color[rgb]{0,0,0}\makebox(0,0)[lt]{\lineheight{1.25}\smash{\begin{tabular}[t]{l}$(0,1)$\end{tabular}}}}%
    \put(0.46735857,0.00403248){\color[rgb]{0,0,0}\makebox(0,0)[lt]{\lineheight{1.25}\smash{\begin{tabular}[t]{l}$(1,0)$\end{tabular}}}}%
    \put(0.52643766,0.04541317){\color[rgb]{0,0,0}\makebox(0,0)[lt]{\lineheight{1.25}\smash{\begin{tabular}[t]{l}$x^2$\end{tabular}}}}%
  \end{picture}%
\endgroup%

    \caption{Illustration of perturbation of $\Xi_{2n}$ for $n = 1$, with the triangles indicating boundary equilibria. In this example $(\bar x^1, 0)$ is locally exponentially stable, and $(0, \bar x^2)$ is unstable. Blue arrows indicate the trajectory directions, i.e., the flow of the vector field. At $(\bar x^1, 0)$, a perturbation is introduced, adding  a hemisphere (semicircle for scalar $x^1, x^2$) of radius $\epsilon$ to cover points with $|x^2|<\epsilon$ but smoothly joined to $x^2=0$. By picking $\epsilon$ suitably small, all points in the perturbed region are in the region of attraction of $(\bar x^1, 0)$ and hence on the boundary of the hemisphere, the trajectory points `inward'. This is detailed in \cref{sssec:stable_boundary}.  At $(0, \bar x^2)$, we make no such perturbation. Trajectories beginning with $x^1(0) = 0$ will have $x^1(t) = 0$ for all $t$, i.e. the trajectories move along the $x^2$ axis towards $(0, \bar x^2)$. However, in the interior, all trajectories point away from the unstable equilibrium, as detailed in \cref{sssec:unstable_boundary}.}  \label{fig:bivirus_perturbation}
\end{figure}

To introduce the idea, suppose for the moment there is a single dimension, i.e. $x^2$ is a scalar, and we are working with a bivirus system with just one population. Now the boundary of $\Xi_{2}$ which has $x^2=0$ defines a line, along which $x^1$ varies. At some point on this line lies the equilibrium $(\bar x^1, 0)$, which is locally exponentially stable by hypothesis. Perturb the boundary of $\Xi_2$ along the line $x^2=0$ using a bump function in the vicinity of $(\bar x^1,0)$ so that for  an arbitrary fixed but suitably small positive $\epsilon$, the perturbation occurs within the interval $(\bar x^1-\epsilon,\bar x^1+\epsilon)$, the perturbation is in the direction $x^2<0$, and the perturbation ensures that $|x^2|<\epsilon$ is within the new perturbed boundary.\footnote{In more detail, suppose $f(t)$ is the function $\exp(-1/t), t>0$ and $0$ for $t\leq 0$.  Define $g(t)=f(t)/[f(t)+f(1-t)]$; this is a function which transitions smoothly and monotonically from value 0 at $t=0$ to value 1 at $t=1$. Define $h(t)=g\big((t +\epsilon)(2\epsilon)^{-1}\big)$. This function transitions smoothly and monotonically from 0 at $t=-\epsilon$ to 1 at $t=\epsilon$. The function $\phi_{\epsilon}(t)=4\epsilon h(t)(1-h(t))$ is smooth and transitions with monotone increase from 0 at $t=-\epsilon$ to $\epsilon$ at $t=0$ and transitions with monotone decrease from $\epsilon$ at $t=0$ to zero at $t=\epsilon$. It is zero outside $(-\epsilon, \epsilon)$. 
   }
The boundary $x^2=0$ is replaced  $x^2=-\phi_{\epsilon}(x_1-\bar x_1)$.

The detailed definition of $\phi_{\epsilon}$ is contained in the footnote, and the boundary perturbation is like a semicircle extending into the left half plane, but smoothly connected to the axis $x^2=0$. An illustration of this is presented in \Cref{fig:bivirus_perturbation}.

Here is how to generalize this idea to the case when $x^1,x^2$ are both $n$-dimensional, using the same function $\phi_{\epsilon}$.

\begin{lemma}\label{lem:stableboundary}
Suppose the region $\Xi_{2n}$ is as defined earlier and  with $\epsilon$ an arbitrary positive constant,  denote by $\phi_{\epsilon}$ a smooth bump function that is zero outside of $(-\epsilon,\epsilon)$, positive in $(-\epsilon,\epsilon)$, and taking the value $1$ at $0$. Consider that part of the boundary of $\Xi_{2n}$ defined by $x^2={\bf{0}}_n$, with $(\bar x^1, {\bf{0}}_n)$ being the boundary equilibrium of the bivirus system. Now with $\epsilon$ a suitably small arbitrary but fixed positive constant, expand the region by defining a new boundary via
\begin{equation}\label{eq:perturb}
    x^2_i=-\frac{1}{\sqrt n}\phi_{\epsilon}(\|x^1-\bar x^1\|)\,,\;\forall\, i\in\mathcal{V}
\end{equation}
Then points on the new boundary either have all entries of $x^2$ negative, or all entries zero, and those with $x^2\ll {\bf{0}}_n$  are all no greater than a distance $\epsilon$ from $(\bar x^1,{\bf{0}}_n)$ and obey $\|x^2\| \leq \epsilon$. 
\end{lemma}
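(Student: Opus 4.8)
The plan is to treat this as essentially a bookkeeping lemma describing the graph of the map $x^1\mapsto x^2$ prescribed by \eqref{eq:perturb}: no dynamical fact about \eqref{eq:bivirus} enters, only the elementary properties of $\phi_\epsilon$ together with one smoothness remark. First I would establish the dichotomy. Since $\phi_\epsilon\ge 0$ throughout, $\phi_\epsilon(s)>0$ exactly for $s\in(-\epsilon,\epsilon)$, and in \eqref{eq:perturb} the argument is the nonnegative number $s=\|x^1-\bar x^1\|$, two cases arise: if $\|x^1-\bar x^1\|<\epsilon$ then $\phi_\epsilon(\|x^1-\bar x^1\|)>0$ and every coordinate of $x^2$ equals the same negative number $-\tfrac{1}{\sqrt n}\phi_\epsilon(\|x^1-\bar x^1\|)$, so $x^2\ll\mathbf 0_n$; if $\|x^1-\bar x^1\|\ge\epsilon$ then $\phi_\epsilon=0$ and $x^2=\mathbf 0_n$. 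This is precisely the asserted alternative, and it simultaneously shows that the new boundary coincides with the original face $\{x^2=\mathbf 0_n\}$ outside the ball of radius $\epsilon$ about $\bar x^1$, so the modification is localized there.

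Next I would verify the size bounds. If a perturbed-boundary point has $x^2\ll\mathbf 0_n$ then, by the previous step, $\|x^1-\bar x^1\|<\epsilon$ --- i.e. its $x^1$-coordinate is within distance $\epsilon$ of $\bar x^1$ --- while
\[
\|x^2\|^2=\sum_{i=1}^n (x^2_i)^2 = n\cdot\frac1n\,\phi_\epsilon(\|x^1-\bar x^1\|)^2=\phi_\epsilon(\|x^1-\bar x^1\|)^2 .
\]
Since $\phi_\epsilon$ attains its maximum at $0$ and, with the scaling fixed in the construction, is bounded above there by $\epsilon$, this gives $\|x^2\|=\phi_\epsilon(\|x^1-\bar x^1\|)\le\epsilon$, which are the two bounds claimed. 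Together they confine the entire perturbed piece of $\partial\Xi_{2n}$ to an arbitrarily small neighborhood of $(\bar x^1,\mathbf 0_n)$ (Euclidean distance at most $\sqrt2\,\epsilon$). This is also where the hypothesis ``$\epsilon$ suitably small'' is used at the level of the lemma itself: because $\bar x^1\gg\mathbf 0_n$ and $\bar x^1\ll\mathbf 1_n$, choosing $\epsilon$ small ensures $x^1\in(0,1)^n$ all along the perturbed piece, so the enlarged region is still a legitimate domain for \eqref{eq:bivirus}.

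The only genuinely non-routine point --- and the step I expect to require the most care --- is that the new boundary remains smooth at the single point $x^1=\bar x^1$, where $x^1\mapsto\|x^1-\bar x^1\|$ fails to be differentiable. I would dispatch this by observing that the bump function of the footnote is \emph{even} about $0$: the symmetry $g(1-t)=1-g(t)$ makes $h(t)-\tfrac12$ an odd function of $t$, so $\phi_\epsilon(t)=4\epsilon\,h(t)\bigl(1-h(t)\bigr)=\epsilon-4\epsilon\,(h(t)-\tfrac12)^2$ is even, hence of the form $\Psi(t^2)$ for a smooth $\Psi$ near the origin; consequently $\phi_\epsilon(\|x^1-\bar x^1\|)=\Psi(\|x^1-\bar x^1\|^2)$ is $C^\infty$ in $x^1$ everywhere, and it agrees with the constant $0$ to infinite order on the sphere $\|x^1-\bar x^1\|=\epsilon$, so the perturbed region's boundary is as smooth as the original one. (If one preferred to sidestep the evenness argument, it would suffice to require $\phi_\epsilon$ to be flat --- all derivatives zero --- at $0$ as well as at $\pm\epsilon$, making this step immediate.) Every remaining assertion of the lemma is then a direct restatement of the way $\phi_\epsilon$ was constructed, so no further obstacle arises.
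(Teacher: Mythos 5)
Your proposal is correct and follows the same route the paper intends (the paper states the lemma as an immediate consequence of the bump-function construction and gives no separate proof): the dichotomy and the bound $\|x^2\|=\phi_\epsilon(\|x^1-\bar x^1\\|)\le\epsilon$ follow exactly as you compute, provided one uses the footnote's normalization $\phi_\epsilon(0)=\epsilon$ rather than the value $1$ quoted in the lemma statement, a discrepancy you implicitly (and correctly) resolve. Your extra observation that smoothness at $x^1=\bar x^1$ requires the evenness of $\phi_\epsilon$ (so that $\phi_\epsilon(\|x^1-\bar x^1\|)=\Psi(\|x^1-\bar x^1\|^2)$ with $\Psi$ smooth), and your remark that the full Euclidean distance to $(\bar x^1,\mathbf 0_n)$ is only bounded by $\sqrt2\,\epsilon$, are genuine refinements of points the paper leaves loose, but they do not change the substance of the argument.
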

  
Now choose $\epsilon$ so that all points in the perturbed region at a distance $\epsilon$ from the equilibrium $(\bar x^1,{\bf{0}}_n)$ are in its region of attraction. Then all points on the modified boundary which have  $x^2\ll {\bf{0}}_n$ lie within the region of attraction of $(\bar x^1, {\bf{0}}_n)$. All other points on the boundary (i.e. those which are also on the boundary of the unmodified region) have an inward pointing trajectory (in the case where one or more entries of $x^2$ are positive), or a trajectory pointing along the boundary towards $(\bar x^1,{\bf{0}}_n)$ (in the case where $x^2={\bf{0}}_n$), by the arguments given previously.
  
Henceforth, we shall use the notation $\tilde{\Xi}_{2n}$ to denote the perturbation of $\Xi_{2n}$ to encompass any locally exponentially stable boundary equilibrium, achieved using \eqref{eq:perturb} in the case of $(\bar x^1, {\bf{0}}_n)$ and/or a similar perturbation in the case of $({\bf 0}_n, \bar x^2)$. 
  
  \subsubsection{Behavior in the vicinity of an unstable boundary equilibrium}\label{sssec:unstable_boundary}
  
  We now examine trajectories in the vicinity of a boundary equilibrium that is unstable. Our exposition will consider $(\bar x^1, {\bf{0}}_n)$, but the same conclusions can be drawn if $({\bf 0}_n, \bar x^2)$ is unstable. No perturbation of the region of interest is made. Unless all eigenvalues of the associated Jacobian matrix have positive real parts, there are always some trajectories which can approach the unstable equilibrium (with such trajectories defining the `stable manifold' of this equilibrium).  Those trajectories starting  on the boundary of $\Xi_{2n}$ with $x^2={\bf{0}}_n$ (including those starting in a neighborhood of the equilibrium) evolve according to the single virus equation, and thus approach the equilibrium. See \cref{ssec:single_virus} above for details. We can further ask whether there is any trajectory starting in a neighborhood of $(\bar x^1, {\bf{0}}_n)$ and in the interior of $\Xi_{2n}$ that might also converge to $(\bar x^1, {\bf{0}}_n)$ (even if some trajectories will not, on account of the instability property). The key conclusion is as follows. (An illustrative example is presented in \Cref{fig:bivirus_perturbation}, but assuming $(0, \bar x^2)$ is the unstable boundary equilibrium.)
  
  \begin{lemma}
  Suppose that the boundary equilibrium $(\bar x^1,{\bf{0}}_n)$ of \eqref{eq:bivirus} is unstable, in the sense that one or more eigenvalues of the associated Jacobian have positive real part. Then there exists no trajectory beginning in the interior of $\tilde \Xi_{2n}$  which approaches this unstable equilibrium, with $\tilde \Xi_{2n}$ is defined above. Namely,  $\tilde \Xi_{2n}$ is equal to $\Xi_{2n}$ with perturbation to encompass the boundary equilibrium $({\bf 0}_n, \bar x^2)$ if it is locally exponentially stable.
  \end{lemma}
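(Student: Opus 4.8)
The plan is to read the instability off the block upper-triangular structure of the Jacobian in \eqref{eq:boundaryjacobian}. Since its $1$--$1$ block is Hurwitz (as recorded just below \eqref{eq:boundaryjacobian}), the hypothesis that $(\bar x^1,{\bf 0}_n)$ is unstable forces the $2$--$2$ block $M:=-D^2+(I_n-\bar X^1)B^2$ to have an eigenvalue with positive real part. Because the single-virus endemic equilibrium obeys $\bar x^1\ll{\bf 1}_n$, the diagonal matrix $I_n-\bar X^1$ is positive, so $(I_n-\bar X^1)B^2$ is nonnegative with the same zero pattern as $B^2$, hence irreducible. The Perron--Frobenius facts recalled in \Cref{ssec:background} then give that $\lambda:=\sigma(M)$ is a simple real eigenvalue equal to the largest real part among eigenvalues of $M$, hence $\lambda>0$, and that there is a left eigenvector $w\gg{\bf 0}_n$ with $w^\top M=\lambda w^\top$.

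Next I would introduce, along a trajectory, the scalar $g(t):=w^\top x^2(t)$ and show it grows exponentially whenever the trajectory is near $(\bar x^1,{\bf 0}_n)$. Using $-D^2+(I_n-X^1-X^2)B^2=M+(\bar X^1-X^1-X^2)B^2$ in \eqref{eq:bivirusb} and premultiplying by $w^\top$ yields $\dot g=\lambda g+w^\top(\bar X^1-X^1-X^2)B^2 x^2$. The perturbation term has magnitude at most $\|w\|\,\|B^2\|\,\|\bar X^1-X^1-X^2\|\,\|x^2\|$, and, since $x^2\geq{\bf 0}_n$ on the relevant portion of the trajectory, $\|x^2\|\leq(\min_i w_i)^{-1}g$. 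Hence on a sufficiently small neighborhood of $(\bar x^1,{\bf 0}_n)$ -- where $\|\bar X^1-X^1-X^2\|$ can be made as small as desired -- one obtains $\dot g\geq\frac{\lambda}{2}g$, so that $g(t)\geq g(T)e^{(\lambda/2)(t-T)}$ once the trajectory has entered and stays in that neighborhood from some time $T$ onward.

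Finally I would close by contradiction. Suppose a trajectory starting in the interior of $\tilde{\Xi}_{2n}$ converges to $(\bar x^1,{\bf 0}_n)$. The added part of $\tilde{\Xi}_{2n}$ lies near the stable equilibrium $({\bf 0}_n,\bar x^2)$ and, by the construction in \Cref{lem:stableboundary} and the discussion following it, is contained in that equilibrium's region of attraction; so the trajectory must in fact start in the interior of $\Xi_{2n}$, where $x^2(0)\gg{\bf 0}_n$. By the invariance lemma of \cite{liu2019bivirus}, $x^2(t)\geq{\bf 0}_n$ for all $t$, and since $x^2\equiv{\bf 0}_n$ solves \eqref{eq:bivirusb}, uniqueness of solutions forces $x^2(t)\neq{\bf 0}_n$ for all $t$; together with $w\gg{\bf 0}_n$ this gives $g(t)>0$ for all $t$, in particular $g(T)>0$. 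Then $g(t)\to\infty$, contradicting $g(t)=w^\top x^2(t)\to 0$ as the trajectory converges to $(\bar x^1,{\bf 0}_n)$; thus no such trajectory exists. I expect the main obstacle to be the bookkeeping in the middle step: making precise that proximity to the equilibrium controls $\|\bar X^1-X^1-X^2\|$ uniformly, and reconciling $\tilde{\Xi}_{2n}$ with $\Xi_{2n}$ near $(\bar x^1,{\bf 0}_n)$ so that $x^2(t)\geq{\bf 0}_n$ may legitimately be invoked; the rest is a routine Perron--Frobenius-plus-Gr\"onwall argument.
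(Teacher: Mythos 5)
Your proposal is correct, and it proves the lemma by a genuinely different route from the paper. The paper argues via linearization: after reducing, exactly as you do, to a trajectory with $x^2(0)\gg{\bf 0}_n$ converging to $(\bar x^1,{\bf 0}_n)$, it models the tail of the trajectory by $\dot x = J(\bar x^1,{\bf 0}_n)x$, expands $x^2(T)$ in the eigenvectors of $M=-D^2+(I-\bar X^1)B^2$ (assuming first that the eigenvalues are distinct), argues that convergence kills the coefficients of all eigenvectors with nonnegative-real-part eigenvalues, and then gets the contradiction $u_1^\top x^2(T)=0$ against $u_1\gg{\bf 0}_n$, $x^2(T)\gg{\bf 0}_n$; repeated eigenvalues are deferred to a Jordan-block remark, and hyperbolicity (via Theorem~\ref{app:pf_thm_hyperbolicity}) is invoked to justify the use of the linearization. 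Your argument instead works directly with the nonlinear dynamics: writing \eqref{eq:bivirusb} as $\dot x^2=[M+(\bar X^1-X^1-X^2)B^2]x^2$ and projecting onto the positive left Perron eigenvector $w$ of the irreducible Metzler matrix $M$ gives $\dot g\geq(\lambda/2)g$ in a small neighborhood of the equilibrium (since $\bar X^1-X^1-X^2$ is diagonal with entries $\bar x^1_i-x^1_i-x^2_i$, its norm is controlled by the distance to $(\bar x^1,{\bf 0}_n)$, and nonnegativity of $x^2$ gives $\|x^2\|\leq(\min_i w_i)^{-1}g$), while forward invariance of $\Xi_{2n}$ plus backward uniqueness through the invariant set $\{x^2={\bf 0}_n\}$ guarantees $g(T)>0$; Gr\"onwall then contradicts $g(t)\to 0$. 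What your route buys: it needs only $\sigma(M)>0$ (immediate from the instability hypothesis and the Hurwitz $1$--$1$ block of \eqref{eq:boundaryjacobian}) and the simplicity/positivity facts of Perron--Frobenius, so it avoids the distinct-eigenvalue case split, the Jordan-block afterthought, and any appeal to the validity of first-order approximation near the equilibrium; in that sense it is more self-contained and arguably tighter than the paper's argument. What the paper's route buys is a shorter spectral bookkeeping step once linearization is accepted, and it runs parallel to the stable-manifold intuition it wants to convey. Your handling of $\tilde\Xi_{2n}$ versus $\Xi_{2n}$ (points added by the bump lie in the basin of $({\bf 0}_n,\bar x^2)$, so the offending trajectory may be assumed to start in the interior of $\Xi_{2n}$ with $x^2(0)\gg{\bf 0}_n$) is exactly the paper's reduction, so no gap there.
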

  
  \begin{proof}
   Suppose,  to obtain a contradiction, there is a trajectory,  call it $\mathcal T$, starting inside the region $\tilde \Xi_{2n}$ which  has the equilibrium $(\bar x^1,{\bf{0}}_n)$ as its limit. If $({\bf 0}_n, \bar x^2)$ is locally exponentially stable, then any trajectory beginning in $\tilde \Xi_{2n}\setminus\Xi_{2n}$ (i.e. in the perturbed region) is in the region of attraction of $({\bf 0}_n, \bar x^2)$, and thus cannot converge to $(\bar x^1,{\bf{0}}_n)$. Therefore without loss of generality we can assume that the initial condition for the trajectory $\mathcal T$ satisfies $x^2(0)\gg {\bf{0}}_n$. Now for sufficiently large values of time, $t\geq T_0$ say, the trajectory will be arbitrarily close to the limit and thus its evolution from  time $T_0$ onwards can be modelled (to first order) using the linearized equation 
   \begin{equation}\label{eq:linfull}
       \dot x=J(\bar x^1,{\bf{0}}_n)x
   \end{equation}
   which in the light of \eqref{eq:boundaryjacobian} means that the projection $x^2(t)$ satisfies
    \begin{equation}
        \dot x^2=[-D^2+(I-\bar X^1)B^2]x^2
    \end{equation}
 with $x^2(t)\gg 0$ for all finite $t$ and converging to $\vect 0_n$ as $t\to\infty$. 
 
Note that hyperbolicity is required to justify the validity of the linearization approximation and in particular, the drawing of stability conclusions using the linearized equation; hyperbolicity is guaranteed via Theorem~\ref{app:pf_thm_hyperbolicity}.

 Suppose  that  $x^2(T)$ with $T>T_0$ is some point on the projection of the  trajectory $\mathcal T$.  Suppose initially that the eigenvalues of $[-D^2+(I-\bar X^1)B^2]$ are distinct, in which case there are $n$ linearly independent eigenvectors, call them $v_1,v_2,\dots,v_n$. Further suppose that $v_1\gg  {\bf 0}_n$ corresponds to the eigenvalue $\sigma[-D^2+(I-\bar X^1)B^2] $, which is simple since $-D^2+(I-\bar X^1)B^2$ is an irreducible Metzler matrix. Because the equilibrium is unstable, one or more eigenvalues of this matrix must have positive real part (else the entire Jacobian  of \eqref{eq:boundaryjacobian} would have strict left half plane eigenvalues, a property which is guaranteed for its block 11 entry). Hence  the eigenvalue $\sigma[-D^2+(I-\bar X^1)B^2] > 0$, necessarily real and simple by the Metzler property. Write $x^2(T)=\sum_i \zeta_iv_i$ for some scalars $\zeta_i$. Because of the convergence of  $\mathcal T$ to $(\bar x^1, {\bf 0}_n)$, there must hold $\zeta_1=0$, and in fact $\zeta_i=0$ for any $i$ for which the associated eigenvalue of $[-D^2+(I-\bar X^1)B^2]$ has nonnegative real part. Hence $x^2(T)=\sum_{i\in\mathcal I}\zeta_iv_i$, where $\mathcal I$ is the set of indices for which the $i$-th eigenvalue of $[-D^2+(I-\bar X^1)B^2]$ has negative real part. Now suppose $u_1^{\top}$ is the left eigenvector corresponding to eigenvalue $\sigma[-D^2+(I-\bar X^1)B^2]$ with $u_1^{\top}v_1=1$ and $u_1\gg  {\bf 0}_n$. Then because $u_1^{\top}v_i=0$ for $i\neq 1$, it follows that $u_1^{\top}x^2(T)=0$. This is a contradiction to the fact that $x^2(T)\gg {\bf 0}_n$ and the fact that $u_1\gg  {\bf 0}_n$. In other words, no trajectory starting in the interior of $\tilde \Xi_{2n}$ can reach the equilibrium $(\bar x^1, {\bf 0}_n)$. Equivalently, in the vicinity of the equilibrium, all trajectories lying in $\Xi_{2n}$ are pointing away from the equilibrium. 
 
 The case when the eigenvalues of $[-D^2+(I-\bar X^1)B^2]$ are not distinct can be handled by a notationally messy argument involving Jordan blocks; note that the uniqueness of the eigenvalue $\sigma[-D^2+(I-\bar X^1)B^2]$ will be critical. 
  \end{proof}
  
 Despite the replacement of $\Xi_{2n}$ by $\tilde\Xi_{2n}$ to ensure an inward-pointing property for trajectories at the boundary, we cannot apply the Poincar\'e-Hopf Theorem to $\tilde \Xi_{2n}$  because (a) there are trajectories confined to the boundaries and (b) there are zero(s) of the vector field lying on the boundary (corresponding to the always present unstable healthy equilibrium and any boundary equilibria that are unstable). Nor, in an attempt to avoid these problems,  can we apply the Theorem to the interior of $\tilde\Xi_{2n}$ since this is not a compact manifold. We are however in a position to introduce some transformations of the manifold $\tilde\Xi_{2n}$ to resolve these issues. 

\subsubsection{ Behavior in the vicinity of the healthy equilibrium}

 While the healthy equilibrium is unstable, it is very likely to have an associated stable manifold. The following lemma ensures this creates no problem, by showing that the positive orthant is not part of the aforementioned stable manifold. 

 \begin{lemma}
  There exists no trajectory beginning in the interior of $\Xi_{2n}$ which converges to the healthy equilibrium.   
 \end{lemma}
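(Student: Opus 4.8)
The plan is to show that the healthy equilibrium $(\vect 0_n, \vect 0_n)$ cannot be the $\omega$-limit of any trajectory originating in the interior of $\Xi_{2n}$, by the same linearization-plus-positivity argument used in the preceding lemma for the boundary equilibria, but now exploiting the block-diagonal structure of the Jacobian at the healthy equilibrium together with \Cref{ass:unstablehealthy}. First I would write down the Jacobian of \eqref{eq:bivirus} at $(\vect 0_n, \vect 0_n)$: since $X^1 = X^2 = 0$ and $x^1 = x^2 = \vect 0_n$ there, it is block-diagonal with diagonal blocks $-D^1 + B^1$ and $-D^2 + B^2$. Each of these is an irreducible Metzler matrix, so by Perron--Frobenius each has a simple real eigenvalue equal to its spectral abscissa $\sigma(-D^i + B^i)$, with associated strictly positive right and left eigenvectors. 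By \Cref{ass:unstablehealthy} and the equivalence $\sigma(-D^i+B^i) > 0 \iff \rho((D^i)^{-1}B^i) > 1$ recorded in \Cref{ssec:background}, we have $\sigma(-D^i+B^i) > 0$ for $i = 1, 2$.

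Next, suppose for contradiction that a trajectory $\mathcal{T}$ starting in the interior of $\Xi_{2n}$ converges to $(\vect 0_n, \vect 0_n)$. By \cite[Lemma~3.1]{ye2021_bivirus} (or directly from the irreducibility/positivity arguments of \Cref{ssec:traj_01}), any trajectory starting in the interior satisfies $x^1(t) \gg \vect 0_n$ and $x^2(t) \gg \vect 0_n$ for all $t > 0$; without loss of generality we may take the initial condition itself to satisfy $x^1(0) \gg \vect 0_n$, $x^2(0) \gg \vect 0_n$. For $t \geq T_0$ sufficiently large, hyperbolicity of the healthy equilibrium (guaranteed under \Cref{ass:unstablehealthy}, as the Jacobian blocks have nonzero spectral abscissa) justifies approximating the evolution by the linearization; in particular the $x^1$-component satisfies, to first order, $\dot x^1 = (-D^1 + B^1) x^1$ with $x^1(t) \gg \vect 0_n$ for all finite $t$ and $x^1(t) \to \vect 0_n$. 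Letting $u_1^\top \gg \vect 0_n$ be the positive left eigenvector of $-D^1 + B^1$ associated with $\sigma(-D^1+B^1) > 0$, and expanding $x^1(T)$ in an eigenbasis (handling non-simple eigenvalues via a notationally heavier Jordan-block argument, exactly as in the previous lemma, with the simplicity and uniqueness of the Perron eigenvalue being the key point), convergence to $\vect 0_n$ forces the coefficient of the Perron eigendirection to vanish, hence $u_1^\top x^1(T) = 0$. This contradicts $x^1(T) \gg \vect 0_n$ and $u_1 \gg \vect 0_n$.

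The argument is essentially a transcription of the proof of the previous lemma, with the simplification that the Jacobian at $(\vect 0_n, \vect 0_n)$ is already block-diagonal, so no off-diagonal coupling terms need to be tracked and one can work with the $x^1$-projection (or the $x^2$-projection) in isolation. The only point requiring a little care — and the main (minor) obstacle — is justifying that a trajectory from the interior stays strictly positive in both $x^1$ and $x^2$ for all positive time, so that the final sign contradiction applies; this follows from irreducibility of $B^1$ and $B^2$ and the nonsingularity of $I_n - X^1 - X^2$ along interior trajectories, in the same manner as the lemmas of \Cref{ssec:traj_01}. As before, the non-distinct-eigenvalue case is handled by a standard Jordan-form argument in which the uniqueness and simplicity of $\sigma(-D^1+B^1)$ is the crucial structural fact.
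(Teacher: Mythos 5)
Your proposal is correct and follows essentially the same route as the paper: linearize at the healthy equilibrium, note the block-diagonal Jacobian with irreducible Metzler blocks $-D^i+B^i$ having positive spectral abscissa by \Cref{ass:unstablehealthy}, and use the strictly positive left Perron eigenvector together with strict positivity of interior trajectories to rule out convergence. The paper's own proof is marginally more economical — it projects the linearized dynamics onto the left eigenvector to get the scalar equation $\dot z=\bar\sigma z$ and concludes divergence directly, which sidesteps the eigenbasis expansion and the Jordan-block caveat (and any appeal to hyperbolicity) that your transcription of the preceding lemma's argument carries along.
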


\begin{proof} 
It is obviously enough to establish that the system obtained by linearization about the healthy equilibrium, viz. 
\begin{equation}\label{eq:linearized_healthy}
    \begin{bmatrix} \dot x^1\\\dot x^2\end{bmatrix}=\begin{bmatrix}-D^1+B^1&0\\0&-D^2+B^2\end{bmatrix}\begin{bmatrix}x^1\\x^2\end{bmatrix},
\end{equation}
has the property of the Lemma hypothesis. We examine for convenience just $x^1(t)$. Let $\bar\sigma=\sigma(-D^1+B^1)$, which is  an eigenvalue of $-D^1+B^1$ and positive by \Cref{ass:unstablehealthy} and properties relating $\bar \sigma$ to $\rho((D^1)^{-1}B^1)$ outlined in \cref{ssec:background}. Let $w^{\top}$ be the associated positive left eigenvector with entries summing to 1. With $z=w^{\top}x^1$, there holds $\dot z=\bar \sigma z$, demonstrating that the projection onto the positive vector $w$ of the points on a trajectory of \eqref{eq:linearized_healthy} in the interior of the positive orthant is divergent away from $x^1 = \vect 0_n$. A similar argument establishes the same conclusion that trajectories in the interior of the positive orthant are divergent away from $x^2 = \vect 0_n$  Thus, any trajectory $x(t)$ in the interior of $\Xi_{2n}$ could never converge to the origin $x = \vect 0_{2n}$. 
\end{proof}

 \subsection{Diffeomorphism involving a sphere and use of Poincar\'e-Hopf Theorem}
  
We suppose that, if there is any stable boundary equilibrium (with at most two being able to occur), the region of interest $\Xi_{2n}$ has been perturbed so as to make such equilibria lie in the interior of the perturbed region $\tilde\Xi_{2n}$, as described in \cref{sssec:stable_boundary}. We will introduce diffeomorphic transformations starting with the interior  $\tilde\Xi_{2n}^{\circ}$ of $\tilde\Xi_{2n}$, and subsequently deal with what happens to the boundaries. In so doing, we will be drawing on an argument used by \cite{glass1975combinatorial} for a related but simpler problem (in which $\tilde\Xi_{2n}$ is replaced by the positive orthant and no trajectories confined to boundaries can exist).

A summation of what we are about to prove concerning the interiors of the various regions is in the following Lemma.

\begin{lemma}\label{lem:diffeo}

With notation as previously, there exists a diffeomorphism $F_0$ between the interior $\tilde\Xi_{2n}^{\circ}$ of $\tilde\Xi_{2n}$ and a punctured sphere $S^{2n}\setminus N$, where $N$ denotes the north pole. This diffeomorphism maps the bivirus vector field defined by \eqref{eq:bivirus} on the manifold $\tilde\Xi_{2n}^{\circ}$ onto a unique smooth vector field on $S^{2n}\setminus N$ with zeros which are the images under $F_0$ of the vector field zeros in $\tilde\Xi_{2n}^{\circ}$, and with the indices of corresponding zeros the same. 
\end{lemma}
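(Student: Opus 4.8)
The plan is to realise $F_0$ as a composition of three diffeomorphisms,
$\tilde\Xi_{2n}^{\circ}\xrightarrow{\,F_1\,}\Xi_{2n}^{\circ}\xrightarrow{\,F_2\,}\mathbb R^{2n}\xrightarrow{\,F_3\,}S^{2n}\setminus N$,
and then to transport the bivirus vector field \eqref{eq:bivirus} by the differential of $F_0:=F_3\circ F_2\circ F_1$. Here $F_1$ undoes the boundary perturbation \eqref{eq:perturb} that was introduced in \cref{sssec:stable_boundary} to absorb a locally exponentially stable boundary equilibrium, $F_2$ exploits convexity of the unperturbed region of interest, and $F_3$ is inverse stereographic projection. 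This is structurally the route taken by \cite{glass1975combinatorial} (there $\Xi_{2n}$ is the positive orthant and $F_2\circ F_1$ is simply the coordinatewise logarithm), so the only genuinely new work is the construction of $F_1$, and \emph{that is the step I expect to be the main obstacle}.

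Constructing $F_1$: by \Cref{lem:stableboundary} the perturbation producing $\tilde\Xi_{2n}$ from $\Xi_{2n}$ is supported in an $\epsilon$-ball around a stable boundary equilibrium -- $(\bar x^1,\vect 0_n)$ and/or $(\vect 0_n,\bar x^2)$ -- and for $\epsilon$ small these supports are disjoint from one another and from every other face of the polytope $\Xi_{2n}$, so it suffices to straighten one of them, say the one at $(\bar x^1,\vect 0_n)$ where $\tilde\Xi_{2n}$ is $\Xi_{2n}$ with a piece of the face $\{x^2=\vect 0_n\}$ replaced by the graph $x^2_i=-\tfrac{1}{\sqrt n}\phi_{\epsilon}(\|x^1-\bar x^1\|)$, and then to compose. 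I would straighten this graph by the shear $(x^1,x^2)\mapsto\big(x^1,\;x^2+\tfrac{1}{\sqrt n}\,\psi(x^2)\,\phi_{\epsilon}(\|x^1-\bar x^1\|)\,\vect 1_n\big)$ of $\mathbb R^{2n}$, where $\psi$ is a smooth cutoff equal to $1$ near $x^2=\vect 0_n$ and vanishing for $\|x^2\|$ larger than a fixed multiple $L$ of $\epsilon$. Since $x^1$ is left fixed, the Jacobian is block lower triangular with diagonal blocks $I_n$ and $I_n+\tfrac{1}{\sqrt n}\phi_{\epsilon}\,\vect 1_n(\nabla\psi)^{\top}$; choosing the cutoff width $L$ large (and then $\epsilon$ small) makes this rank-one perturbation a strict contraction in operator norm, so the shear is a global diffeomorphism of $\mathbb R^{2n}$, equal to the identity off the modified region, and it carries $\tilde\Xi_{2n}^{\circ}$ onto $\Xi_{2n}^{\circ}$; restricting to interiors gives $F_1$. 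If the second boundary equilibrium is also stable, compose with the analogous shear built near $(\vect 0_n,\bar x^2)$. In the write-up I would record only these contraction estimates rather than push the explicit formulae further.

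Constructing $F_2$ and $F_3$: $\Xi_{2n}$ is the intersection of finitely many closed half-spaces $a_j^{\top}x\le b_j$, is bounded (it lies in $[0,1]^{2n}$), and is full-dimensional (e.g. $(\tfrac13\vect 1_n,\tfrac13\vect 1_n)$ is an interior point), so $\Xi_{2n}^{\circ}$ is a nonempty bounded open convex subset of $\mathbb R^{2n}$. Any such set is $C^{\infty}$-diffeomorphic to $\mathbb R^{2n}$; concretely I would take $F_2(x)=\nabla\varphi(x)$ with $\varphi(x)=-\sum_j\log(b_j-a_j^{\top}x)$ the logarithmic barrier, which is smooth and strictly convex on $\Xi_{2n}^{\circ}$ with positive-definite Hessian everywhere (the $a_j$ span $\mathbb R^{2n}$) and $\|\nabla\varphi(x)\|\to\infty$ as $x\to\partial\Xi_{2n}$, whence $\nabla\varphi$ is a diffeomorphism of $\Xi_{2n}^{\circ}$ onto all of $\mathbb R^{2n}$ by Legendre duality together with the inverse function theorem. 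For $F_3$ I would take the inverse stereographic projection from the north pole $N$, the classical diffeomorphism $\mathbb R^{2n}\to S^{2n}\setminus N$.

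Transporting the vector field: write $f$ for the bivirus field \eqref{eq:bivirus}, which is polynomial and hence smooth on $\tilde\Xi_{2n}^{\circ}$, and define $g:=(F_0)_*f$ on $S^{2n}\setminus N$ by $g(F_0(p))=dF_{0,p}\big(f(p)\big)$; because $F_0$ is a diffeomorphism this is the unique smooth vector field that is $F_0$-related to $f$, and its smoothness is automatic. Since each $dF_{0,p}$ is a linear isomorphism, $g(q)=\vect 0$ if and only if $f(F_0^{-1}(q))=\vect 0$, so the zeros of $g$ are exactly the $F_0$-images of the zeros of $f$ in $\tilde\Xi_{2n}^{\circ}$. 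Finally, the index of an isolated zero is a local-diffeomorphism invariant (see e.g. \cite{guillemin2010differential}); in the nondegenerate case this is immediate from $dg_{F_0(p)}=dF_{0,p}\,df_p\,(dF_{0,p})^{-1}$, which is similar to $df_p$ and hence has the same $\mathrm{sign}\det$ and so, by \eqref{eq:ph_sum}, the same index. Assembling these facts yields the Lemma, the single delicate point being -- as flagged -- the verification that the shear underlying $F_1$ is a global diffeomorphism taking $\tilde\Xi_{2n}^{\circ}$ onto $\Xi_{2n}^{\circ}$.
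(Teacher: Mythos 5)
Your proposal is correct in its overall architecture and reaches the lemma by the same high-level scheme as the paper --- a composition $F_0=F_3\circ F_2\circ F_1$ ending in inverse stereographic projection, followed by the standard pushforward argument (uniqueness and smoothness of the transported field under a diffeomorphism, zeros mapped to zeros, and index preservation via the similarity $dg_{F_0(p)}=dF_{0,p}\,df_p\,(dF_{0,p})^{-1}$, which is word-for-word the paper's closing step) --- but your middle factors are genuinely different. The paper maps $\tilde\Xi_{2n}^{\circ}$ to an open ball by a radial ``star-shaped'' map about an interior point and then to $\mathbb R^{2n}$ by the rescaling $p\mapsto(\tan\frac{\pi\|p\|}{2R})p$; you instead undo the boundary bump by a compactly supported shear onto $\Xi_{2n}^{\circ}$ and then use the logarithmic-barrier gradient map to carry the open polytope onto $\mathbb R^{2n}$. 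Your $F_2$ is, if anything, tighter than the paper's: the radial map as described in the paper is only obviously a homeomorphism (smoothness along rays through the edges and corners of the polytope is not addressed), whereas strict convexity, the positive-definite Hessian, and the barrier property do give a genuine diffeomorphism of $\Xi_{2n}^{\circ}$ onto $\mathbb R^{2n}$. The trade-off is your $F_1$: the claim that the shear carries $\tilde\Xi_{2n}^{\circ}$ \emph{onto} $\Xi_{2n}^{\circ}$ depends on exactly which region \eqref{eq:perturb} and \Cref{lem:stableboundary} are taken to define (for $n\geq 2$ the graph in \eqref{eq:perturb} is only $n$-dimensional, so the perturbed region is specified only loosely, e.g.\ points with mixed-sign entries of $x^2$ are not unambiguously assigned); the cleanest repair, fully consistent with the qualitative requirements of \cref{sssec:stable_boundary}, is to \emph{define} $\tilde\Xi_{2n}$ as the shear preimage of $\Xi_{2n}$, after which your $F_1$ is a diffeomorphism by construction and the stable boundary equilibrium is interior as required. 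Two minor technical notes: the invertibility of the shear is best argued fiberwise in $x^2$ for fixed $x^1$ (the full perturbation is not small because $\phi_{\epsilon}'=O(1)$, but the block-triangular structure plus the fiberwise contraction you estimate suffices), and $\phi_{\epsilon}(\|x^1-\bar x^1\|)$ should be written as a smooth function of $\|x^1-\bar x^1\|^2$ to be $C^{\infty}$ at $x^1=\bar x^1$ --- a blemish inherited from \eqref{eq:perturb} itself.
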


\begin{proof}
Observe first that the interior of the new region $\tilde\Xi_{2n}$ is obviously diffeomorphic to the interior of a solid ball $\mathcal B_R$ of arbitrary radius $R$ and of dimension $2n$. One could construct such a diffeomorphism, call it $F_1$, by picking a point in the interior of $\Xi_{2n}$ and mapping it to the origin of the ball, and by mapping each line joining that point to a boundary point of $\tilde\Xi_{2n}$ to a line in the same direction joining the origin to the boundary of the ball. We remark that the boundary of $\tilde \Xi_{2n}$ corresponds to the boundary of the ball. 

Next, observe that the interior of the ball is diffeomorphic to $\mathbb R^{2n}$ under the mapping $F_2: \mathcal B_R^{\circ}\to \mathbb R^{2n}, p\mapsto (\tan\frac{\pi\|p\|}{2R})p$. Note that points on the boundary of the ball are mapped to infinity in $\mathbb R^{2n}$.

Under a further diffeomorphism $F_3$, $\mathbb R^{2n}$ corresponds to a sphere $S^{2n}$ excluding one point, say the north pole of the sphere. Such a diffeomorphism is standard, e.g. \cite[see p.6]{guillemin2010differential} or \cite[see p. 29]{ballmann2018introduction}. Points at infinity in $\mathbb R^{2n}$ effectively correspond to the north pole of $S^{2n}$.

The three diffeomorphisms combine to give a single diffeomorphism $F_0=F_3\circ F_2\circ F_1$ from the interior of $\tilde\Xi_{2n}$ to the punctured sphere $S^{2n}\setminus N$ where $N$ denotes the north pole. 

The vector field defining the bivirus equations in \eqref{eq:bivirus}, is, on the manifold $\tilde\Xi_{2n}$, also transformed by $F_0$.  Because the mapping is a diffeomorphism, a unique smooth vector field on $S^{2n}\setminus N$ is guaranteed to exist, \cite[see Proposition 8.19]{lee2013introduction}{\footnote{Two potential difficulties can arise when a vector field on one manifold $\mathcal M$  is mapped to another manifold $\mathcal N$: when the mapping is not surjective, the vector field is not defined at some points of $\mathcal N$, and if the mapping is not injective, one point in $\mathcal N$ may have a nonunique vector field, \cite[see p.181]{lee2013introduction}. The diffeomorphism property of $F_0$ rules out such problems.}}. Further, the index of the vector field at an isolated zero is preserved (together with the isolation of the zero) by a diffeomorphism, \cite[see p.33, Lemma 1]{milnor1997topology}. At a nondegenerate zero, as pointed out by \cite[see p. 136]{guillemin2010differential}, for a diffeomorphism $y=F(x)$, the vector field at $y$ corresponding to $df_x$ at $x$ is given by the similarity transformation $dg_y=(dF_x)^{-1}df_x(dF_x)$, from which it is evident that ${\rm{sign\; det}}(df_{x})={\rm{sign\; det}}(dg_y)$.  Not only is hyperbolicity preserved, but even the actual eigenvalues of the Jacobian at the equilibrium. 
\end{proof}

The entire boundary of $\tilde\Xi_{2n}$ can be identified with the north pole of the sphere $S^{2n}$ under the mapping $F$. Note that this pole amalgamates so to speak the healthy equilibrium (which is unstable), any unstable boundary equilibrium and  trajectories confined to the boundary, and the initial point of trajectories which start on the boundary but immediately leave it (corresponding to i) single-virus behavior of the bivirus system where $x^i(0) = \vect 0_n$ for some $i$, or ii) $x^1(0)+x^2(0) = \vect 1_n$), as such points are a subset of the boundary $\partial \tilde\Xi_{2n}$. On the surface of the sphere (including the north pole) there will be trajectories for which the north pole is a source, and no trajectories will approach the north pole.  This is in the light of our analysis in \cref{ssec:traj_01,ssec:traj_02}, and the properties we established concerning trajectories at and adjacent to the boundary of $\tilde\Xi_{2n}$. This means one can add the north pole to the punctured sphere, and the associated vector field is well defined at the north pole, with a zero there (in fact, this zero is a source). 

Crucially, the sphere itself (without the puncture) is a compact manifold to which the Poincar\'e-Hopf Theorem in principle can be applied. 

\subsection{Completion of proof of Theorem~\ref{thm:main}}

We apply the Poincar\'e-Hopf Theorem to the sphere. The argument is as follows.  Let $n_k$ denote the number of open left half plane eigenvalues of the Jacobian associated with the $k$-th equilibrium or vector field zero in $\tilde\Xi_{2n}^{\circ}$. As noted above in \Cref{lem:diffeo}, this is the same as the number of eigenvalues associated with the Jacobian of the corresponding vector field zero (using the result of \cite{milnor1997topology} and \cite{guillemin2010differential}) on the sphere, any such zero being away from the north pole.   The index for the north pole, corresponding to the boundary of $\tilde\Xi_{2n},$ which is simply a source from the point of view of trajectories on the sphere, is $1$, since there are no left half plane eigenvalues of the Jacobian. Now it is standard that the Euler characteristic of $S^{2n}$ is 2, see e.g. \cite[pg.~134]{hirsch2012differential}. Hence using the Poincar\'e-Hopf Theorem, i.e. \Cref{thm:PH}, for the sphere, we have $\sum_k(-1)^{n_k}+1=2$, or
\begin{equation}
\sum_k (-1)^{n_k}=1
\end{equation}
This equation, although obtained by studying the sphere, is also the equation which relates the vector field zeros for the bivirus problem, with the understanding that the healthy equilibrium is not counted, a boundary equilibrium which is stable (eigenvalues of the Jacobian in the open left half plane) is counted, and a boundary equilibrium which is not stable (one or more eigenvalues of the Jacobian in the right half plane) is not counted. 

\subsection{Consequences of \texorpdfstring{\Cref{thm:main}}{}}
An immediate and important consequence of the main result, \Cref{thm:main}, is the following. It treats all three of the possible configurations of boundary equilibria that can occur. 

\begin{corollary}\label{cor:ph_count}
Adopt the hypotheses of Theorem \ref{thm:main}. Then 
\begin{enumerate} 
\item if both boundary equilibria of the bivirus system are unstable, then there exists an odd number $k\geq 1$ of coexistence equilibria. There are $(k+1)/2\geq 1$ equilibria with the associated Jacobian having an even number of open left-half plane eigenvalues, and $(k-1)/2$ equilibria (none of which can be stable) with the associated Jacobian having an odd number of open left-half plane eigenvalues;
\item 
if both boundary equilibria of the bivirus system are locally exponentially stable, then there exists an odd number $k\geq 1$ of coexistence equilibria. There are $(k+1)/2\geq 1$ equilibria (none of which can be stable) with the associated Jacobian having an odd number of open left-half plane eigenvalues, and $(k-1)/2$ equilibria with the associated Jacobian having an even number of open left-half plane eigenvalues;
\item
if there is one locally exponentially stable and one unstable boundary equilibrium of the bivirus system, there is either no coexistence equilibrium, or an even number $k\geq 2$, of which one half have an associated Jacobian with an even number of open left-half plane eigenvalues, and one half (none of which can be stable) have an associated Jacobian with an odd number of open left-half plane eigenvalues.
\end{enumerate}
\end{corollary}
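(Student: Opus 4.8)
The plan is to derive the corollary as a direct bookkeeping consequence of \Cref{thm:main}, the only substantive inputs being that the ambient state space has \emph{even} dimension $2n$ and that, under the standing hyperbolicity hypothesis, each boundary equilibrium is either locally exponentially stable or unstable with no intermediate case, so the three configurations listed are exhaustive. First I would record two elementary facts. Since each Jacobian $df_{x_k}$ acts on $\mathbb{R}^{2n}$ and has no eigenvalue on the imaginary axis, $n_k \in \{0,1,\dots,2n\}$; an equilibrium is locally exponentially stable precisely when $n_k = 2n$, which is even, so every stable equilibrium contributes $(-1)^{n_k} = +1$ in \eqref{eq:count}. Conversely, if $n_k$ is odd then $n_k < 2n$, so at least one eigenvalue lies in the open right half plane and the equilibrium is unstable; this is exactly what underpins the parenthetical ``none of which can be stable'' in each case.

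Next I would take an exact census of which equilibria enter the sum in \Cref{thm:main}: the healthy equilibrium is never counted, an unstable boundary equilibrium is never counted, and a locally exponentially stable boundary equilibrium \emph{is} counted, contributing $+1$ by the observation above. Writing $k$ for the number of coexistence equilibria, $a$ for the number of these whose Jacobian has an even number of open left-half-plane eigenvalues, and $b$ for the number with an odd number of such eigenvalues, we have $a + b = k$, and \eqref{eq:count} reads $(a - b) + s = 1$, where $s \in \{0,1,2\}$ is the number of stable boundary equilibria.

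The three cases then follow by solving this relation. If both boundary equilibria are unstable, $s = 0$, so $a - b = 1$, whence $a = (k+1)/2$ and $b = (k-1)/2$; integrality together with $b \geq 0$ forces $k$ odd, and $k = 0$ is impossible since it would give $a - b = 0$, so $k \geq 1$. If both boundary equilibria are stable, $s = 2$, so $a - b = -1$, giving $b = (k+1)/2$, $a = (k-1)/2$, again $k$ odd and $k \geq 1$. If exactly one boundary equilibrium is stable, $s = 1$, so $a - b = 0$, giving $a = b = k/2$, so $k$ is even, possibly $0$, and if positive then $k \geq 2$. In every case the equilibria counted by $b$ (the odd tally) are unstable by the first observation, matching the statement.

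I do not expect a genuine obstacle. The only point requiring care is the precise census underlying $(a-b)+s=1$ — in particular that a stable boundary equilibrium is included with weight $+1$ whereas an unstable one is omitted — combined with the parity of $2n$; once this is set up correctly, the remainder is solving a $2\times 2$ integer linear system and reading off the non-negativity constraints.
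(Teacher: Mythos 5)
Your proposal is correct and follows essentially the same route as the paper's own proof: both apply \eqref{eq:count} with the census that stable boundary equilibria enter with contribution $+1$ (since $n_k=2n$ is even) while the healthy and unstable boundary equilibria are excluded, then split the coexistence equilibria into even/odd counts ($k_e-k_0 = 1-s$) and solve, using the parity of $2n$ to justify that odd-count equilibria cannot be stable. No gaps; your explicit justification of the ``none of which can be stable'' parentheticals is a slightly more careful write-up of what the paper leaves implicit.
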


\begin{proof}
For the first claim, suppose there are two unstable boundary equilibria. Since we do not count any unstable boundary equilibria in computing \eqref{eq:count}, there must be at least one equilibrium in $\tilde \Xi_{2n}^\circ$ contributing to the sum on the left hand side of \eqref{eq:count} in order that the sum be positive, and the associated Jacobian must have an even number of eigenvalues in the open left-half plane. Thus $k \geq 1$. Suppose that $k_e,(k_0) $ denote the number of equilibria with an even (odd) count of open left-half plane eigenvalues, with $k_e+k_0=k$.  Since an equilibrium with a Jacobian having an even (respectively, odd) number of eigenvalues in the open left-half plane contribute $+1$ (respectively, $-1$) to the right hand side of \eqref{eq:count}, there holds $k_e-k_0=1$.  The remaining conclusions of item 1) of the corollary are easily established. 

For the second claim, suppose  there are two locally exponentially stable boundary equilibria. The argument is the same as that for the first claim, save that \eqref{eq:count} yields $k_e-k_0=-1$, in light of the two stable boundary equilibria.

For the third claim, let us note the fact that a configuration of one stable and one unstable boundary equilibria is possible has been known in the literature for some time, e.g.~\cite[Corollary~3.11]{ye2021_bivirus} and \cite{santos2015bi}. Given the stated equilibrium pattern and with no coexistence equilibria, there is a single summand in the sum \eqref{eq:count}, associated with the single stable boundary equilibrium. The claim covering the case when there are coexistence equilibria is proved in like manner to the first and second claims. (An example below will demonstrate that such a configuration of boundary equilibria can also allow for the presence of coexistence equilibria, i.e., $k \geq 2$.) 
\end{proof}

If the bivirus system has two unstable boundary equilibria (Item~2 of \Cref{cor:ph_count}), then one can further exploit known properties of monotone systems to conclude that among the $k$ coexistence equilibria, at least one of them is locally exponentially stable~\cite[Theorem~2.8]{smith1988monotone_survey}.
We will develop further counting conditions in the next section, based on Morse inequalities for Morse-Smale systems, which provides an alternative method to show that there is necessarily a stable coexistence equilibrium if both boundary equilibria are unstable.
A weaker version of the second claim of the lemma can be found in \cite{janson2020networked}, where it is established that there must be at least one coexistence equilibrium (but no stability properties are provided for the equilibrium).

\subsection{Application of results to example in \texorpdfstring{\Cref{ssec:n4_example}}{}}

Recall that it was possible using simulations of the $n=4$ example in \Cref{ssec:n4_example} to establish that there were two stable boundary equilibria and two stable coexistence equilibria, but determining the number of unstable coexistence equilibria was more involved. With the results obtained using the Poincar\'e-Hopf Theorem (i.e. \Cref{thm:main} and \Cref{cor:ph_count}), we can easily obtain a lower bound on the number of unstable coexistence equilibria based on knowledge of the presence of the stable equilibria. Evidently, the two stable coexistence equilibria have an even number of open left-half plane eigenvalues (being 8, the system dimension). Thus, the total number of coexistence equilibria with an even number of open left-half plane eigenvalues must be $(k-1)/2 \geq 2$, which implies $k \geq 5$. In other words, there are at least $5$ coexistence equilibria, of which at least $3$ must be unstable with an odd number of eigenvalues in the open left-half plane. Not only does this underscore the complexity of the equilibria patterns for networked bivirus SIS systems, but highlights the additional insights provided through Poincar\'e-Hopf Theory. While monotone systems theory (\cite[Proposition~2.9]{smith1988monotone_survey}) can allow one to conclude the presence of $3$ unstable coexistence equilibria, the eigenvalue properties cannot be so obtained.

\section{Further counting results, involving inequalities}\label{sec:morse_smale}

Well after the original work establishing the Poincar\'e-Hopf formula of \cref{thm:PH}~\cite{hopf27}, further counting results were obtained involving the equilibria of an equation $\dot x=f(x)$ defined on a compact manifold $\mathcal M$, which were additional to (though also incorporating) that formula. We briefly summarize below those aspects of the results needed for use on a sphere, and then demonstrate its application to the bivirus system. 

Two major sequential developments provided the results. The first built on the work of Morse \cite{morse1934calculus} on critical points of a smooth scalar function, call it $g(x)$, defined on a $n$-dimensional manifold. For a summary, see~\cite[pp. 28-31]{milnor2016morse} and \cite[pp. 290-291]{mukherjee2015differential}, while \cite{matsumoto2002introduction} contains a more leisurely treatment. On an $n$-dimensional manifold, the nondegenerate critical points of a scalar function (nondegenerate critical points being those where the gradient is zero and the Hessian is nonsingular) may be minima, maxima, or saddle points, corresponding to the number of negative eigenvalues of the Hessian being $0$, $n$ or any integer in between, respectively. The number of such eigenvalues is termed the Morse index.  Morse obtained a set of $n+1$ inequalities (including one equality) relating the numbers of saddle points with different Morse indices, assuming that the number of critical points is finite and all are nondegenerate; the set of inequalities also involves the values of certain topological indices termed Betti numbers, see e.g. \cite{matsumoto2002introduction}, in addition to the Euler characteristic of the manifold.  

The second and further major advance on this work  can be attributed to Smale, who studied the equilibria of systems $\dot x=f(x)$ defined on a manifold $\mathcal M$; the work essentially gave identical results to Morse Theory in the special case when $f(x)={\mbox{grad}}_{\mathcal M}g(x)$ is a gradient  of some smooth scalar function $g(x)$, given that further conditions are imposed on $f(x)$, as described further below. One such restriction is that all equilibria are hyperbolic.   For introductory remarks on such systems, see \cite{zomorodian2005topology}, while the key reference for our use of such ideas is~\cite{smale1967differentiable}. For an $n$-dimensional manifold, let $c_{\lambda}$ denote the number of equilibrium points for which the associated Jacobian has precisely $\lambda$ eigenvalues with negative real part. Then a set of inequalities involving the $c_{\lambda}$ can written down, which include an equality that is equivalent to the equality arising in the Poincar\'e-Hopf Theorem. The Euler characteristic and the Betti numbers of the manifold appear in the inequalities. More details are now offered relevant to their application to the bivirus problem.  

\subsection{Definition and Properties of Morse-Smale systems}

The definition of a Morse-Smale system requires an understanding of the concepts of the stable manifold and unstable manifold of an equilibrium point of a dynamical system \cite{smale1960morse,sastry2013nonlinear, wiggins2003introduction}. Roughly speaking, the stable manifold of an equilibrium point is the set of points from which forward-time trajectories will converge to the point, and the unstable manifold is the set of points from which backward-time trajectories will converge to the point. While such sets do not always constitute manifolds, they do so when equilibrium points are hyperbolic \cite[pp. 289-290]{sastry2013nonlinear}. The dimension of a stable (unstable) manifold is then the number of left (right) half plane eigenvalues of the Jacobian $df_x$ evaluated at the equilibrium. Note that a hyperbolic equilibrium point which is not stable will have an associated stable manifold unless it is actually a source (i.e. the associated Jacobian matrix has all eigenvalues with positive real parts). The following defines the properties characterizing a Morse-Smale dynamical system.

\begin{definition}\label{ass:MS}
A smooth dynamical system $\dot x=f(x)$ existing on some $n$-dimensional manifold $\mathcal M$ is a \textit{Morse-Smale system} when the following conditions hold:
\begin{enumerate}
\item
 Trajectories have no finite escape times in the forward or backward directions,  i.e. $\|x(t)\| \to \infty$ when $t\to T$ for some finite $T$ is not possible, and for any initial condition, solutions exist in both directions  on $(-\infty,\infty)$.
 \item
 Equilibrium points are hyperbolic (i.e. at an equilibrium point $x_k$, the matrix $df_{x_k}$ has no eigenvalues with zero real part).
 \item
If the stable manifold $\mathcal W_s(x_j)$ of the equilibrium point $x_j$ intersects the unstable manifold $\mathcal W_u(x_k)$ of a second equilibrium point $x_k$, the intersection is transverse, that is, if $x\in\mathcal W_s(x_j)\cap\mathcal W_u(x_k)$, then the span of the corresponding tangent space is $\mathbb R^n$, i.e. $T_x(\mathcal W_s(x_j))+T_x(\mathcal W_u(x_k))=\mathbb R^n$. This condition may alternatively be stated as ${\rm{dim}} \mathcal W_s(x_j)+{\rm{dim}}W_u(x_k)-n={\rm{dim}}(T_x(\mathcal W_s(x_j))\cap T_x(\mathcal W_u(x_k)))$.
\item
If there are periodic orbits, they are hyperbolic.\footnote{For an explanation of hyperbolicity of periodic orbits (which is a generalization of the idea of hyperbolicity of an equilibrium point), see e.g. \cite{robinson1998dynamical}. This paper however will make virtually no use of this notion.} In particular then, nonattractive limit cycles are not permitted.
\end{enumerate}
\end{definition}

The key counting result for general Morse-Smale systems, though simplified by exclusion of the possibility of periodic orbits (since such an exclusion will be justified in applying the result to bivirus sytems), is presented below, first in general form and then  specialised to the case of motion on a sphere $S^{2n}$:

\begin{theorem}\label{thm:morsesmale}
Suppose that $\dot x=f(x)$ is a Morse-Smale system without limit cycles defined on an $n$-dimensional compact manifold $\mathcal M$. Let $c_{\lambda}$ denote the number of equilibrium points\footnote{The fact that $c_{\lambda}$ must be defined using a particular choice of coordinate basis at an equilibrium point but assumes a value that is independent of the choice is not explicitly demonstrated in \cite{smale1967differentiable} but is implicitly assumed.} for which the associated Jacobian has precisely $\lambda$ eigenvalues with negative real part, or equivalently the associated stable manifold has dimension $\lambda$.
Let $r_{\lambda}$ denote the rank of the $\lambda$-th homology group of $\mathcal M$ (the $\lambda$-th Betti number).  Then the following inequalities  hold:
\begin{align}\label{eq:morsesmaleineq}
 c_0   &\geq r_0  \\\notag
   c_1-c_0  &\geq r_1-r_0 \\\notag
  c_2-c_1+c_0 &\geq r_2-r_1+r_0 \\\notag
            &\vdots\\\notag
c_{n-1}-c_{n-2}+\cdots+(-1)^{n-1}c_0&
\geq r_{n-1}-r_{n-2}+\cdots+(-1)^{n-1}r_0\\\notag
c_n-c_{n-1}+\cdots+(-1)^nc_0&=r_n-r_{n-1}+\cdots+(-1)^nr_0=(-1)^n\chi(\mathcal M)
\end{align}
\end{theorem}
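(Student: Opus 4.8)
The plan is to reduce the statement to the classical Morse inequalities for a smooth function by exhibiting, for a Morse--Smale gradient-like system, a "self-indexing" Morse function (or at least a Morse function) whose critical points coincide with the equilibria of $\dot x = f(x)$ and whose Morse index at each critical point equals the number $\lambda$ of left-half-plane eigenvalues of the Jacobian there. Concretely, I would invoke Smale's fundamental theorem on Morse--Smale systems without closed orbits (from~\cite{smale1967differentiable}), which asserts precisely that such a flow admits a Lyapunov function that is Morse, is strictly decreasing along non-equilibrium trajectories, and has a critical point of index $\lambda$ exactly at each equilibrium whose stable manifold has dimension $\lambda$. The hyperbolicity hypothesis (condition~2 of \Cref{ass:MS}) guarantees each equilibrium is nondegenerate as a critical point, the transversality hypothesis (condition~3) is exactly what makes the unstable/stable manifold decomposition a CW-structure compatible with the flow, and the absence of limit cycles (condition~4, here taken outright) removes the need for the more delicate recurrent-set analysis.

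Once such a Morse function $g$ is in hand, the statement follows from the standard Morse inequalities applied to $g$ on the compact manifold $\mathcal M$. First I would recall that, writing $M_\lambda$ for the number of critical points of $g$ of Morse index $\lambda$, one has the weak Morse inequalities $M_\lambda \ge r_\lambda$ and, more sharply, the strong (Morse--Pitcher) inequalities $M_\lambda - M_{\lambda-1} + \cdots \pm M_0 \ge r_\lambda - r_{\lambda-1} + \cdots \pm r_0$ for each $\lambda < n$, with equality in the top line, i.e. $\sum_\lambda (-1)^\lambda M_\lambda = \sum_\lambda (-1)^\lambda r_\lambda = \chi(\mathcal M)$; see \cite[pp.~28--31]{milnor2016morse} or \cite{matsumoto2002introduction}. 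Since $M_\lambda = c_\lambda$ by the index-matching property above, substituting gives \eqref{eq:morsesmaleineq} verbatim, the final equality being recognisable as the Poincar\'e--Hopf identity \eqref{eq:PHmainequation} in light of \eqref{eq:ph_sum} with $n_k = \lambda$ for the $k$-th equilibrium. The proof of the strong Morse inequalities themselves I would only sketch: it is the classical subadditivity-of-the-Poincar\'e-polynomial argument, namely that the chain complex generated by critical points computes (a complex chain-homotopy-equivalent to) the singular homology of $\mathcal M$, so that $\sum M_\lambda t^\lambda - \sum r_\lambda t^\lambda = (1+t)\,Q(t)$ for a polynomial $Q$ with nonnegative integer coefficients, and reading off coefficients yields all the inequalities and the equality at $t=-1$.

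The specialisation to $\mathcal M = S^{2n}$ is then purely a matter of substituting the known homology of the even-dimensional sphere: $r_0 = r_{2n} = 1$ and $r_\lambda = 0$ for $0 < \lambda < 2n$, and $\chi(S^{2n}) = 2$. I would state the resulting chain of inequalities explicitly — $c_0 \ge 1$, $c_\lambda - c_{\lambda-1} + \cdots \pm c_0 \ge 0$ for $1 \le \lambda \le 2n-1$, and $\sum_{\lambda=0}^{2n} (-1)^\lambda c_\lambda = 2$ — and note that the top equality is exactly \eqref{eq:count} of \Cref{thm:main} once the non-counted boundary equilibria and the added source at the north pole are accounted for, which both cross-checks the result and connects it to the earlier Poincar\'e--Hopf argument. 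The main obstacle, and the place where genuine care is needed rather than quotation, is the first step: justifying that a Morse--Smale flow (as opposed to an honest gradient flow) genuinely carries a Morse function realising the flow's stable-manifold dimensions as Morse indices. This requires that the system be "gradient-like" — which follows from the no-limit-cycles and transversality hypotheses by Smale's theorem, but it is worth flagging that our \Cref{ass:MS} deliberately builds in the conclusion (no nonattractive limit cycles) rather than deriving it, so the application to the bivirus system rests on the separate, only heuristically argued, claim that the bivirus flow is Morse--Smale; within the scope of \emph{this} theorem, however, that claim is a hypothesis and the deduction above is rigorous.
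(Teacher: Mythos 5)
The paper does not actually prove this theorem: it is quoted from Smale \cite{smale1967differentiable} as a known result, so your proposal is an attempt to supply the classical argument behind the citation. The route you choose --- produce a Morse ``energy'' (Lyapunov) function for a Morse--Smale flow without closed orbits whose critical points are exactly the equilibria, then invoke the strong Morse inequalities via the $(1+t)Q(t)$ subadditivity argument --- is indeed the standard underlying proof, and the specialisation to $S^{2n}$ is handled correctly.

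There is, however, a concrete error in your key reduction step, the index matching. For a Lyapunov function $g$ that is \emph{strictly decreasing} along nonconstant trajectories, near a hyperbolic equilibrium $p$ one has $g>g(p)$ on the local stable manifold (orbits there decrease to the limiting value $g(p)$ as $t\to+\infty$) and $g<g(p)$ on the local unstable manifold; hence the Hessian of $g$ at $p$ is positive definite on stable directions and negative definite on unstable directions, so the Morse index of $g$ at $p$ equals the dimension of the \emph{unstable} manifold, i.e.\ $n-\lambda$, not $\lambda$ as you assert (check $\dot x=-x$ with $g=x^2$: the sink has a one-dimensional stable manifold but Morse index zero). Consequently $M_\mu(g)=c_{n-\mu}$, and the classical inequalities applied to $g$ yield the dual chain $c_n\geq r_0$, $c_{n-1}-c_n\geq r_1-r_0$, and so on --- true statements, but not \eqref{eq:morsesmaleineq}, which is indexed by stable-manifold dimension. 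The fix is a one-liner but must be made explicit: apply the Morse inequalities to $-g$ (equivalently, reverse time, which preserves the Morse--Smale-without-closed-orbits hypotheses and swaps stable with unstable manifolds), for which the Morse index at each equilibrium is exactly $\lambda$; do not instead appeal to Poincar\'e duality, since the Betti numbers $r_\lambda$ need not be symmetric for non-orientable $\mathcal M$. A secondary caveat: the existence of the Morse energy function rests on the standard definition of a Morse--Smale system, in which the nonwandering set is the finite union of critical elements; being gradient-like does not follow from conditions 1--4 of \Cref{ass:MS} together with the absence of limit cycles alone (an irrational linear flow on a torus satisfies all of them vacuously and has no equilibria at all), so your appeal to Smale's theorem silently imports that extra hypothesis --- worth flagging, since the paper's own definition omits it.
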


The last equation above can be rewritten as 
\[
(-1)^kc_k=\chi(M)
\]
This is the same as the equality \eqref{eq:PHmainequation} resulting from the Poincar\'e-Hopf Theorem. To see this, recall that any equilibrium whose Jacobian has an odd number of eigenvalues with negative real part has a negative index i.e. the sign of the determinant of the Jacobian is negative. Hence the left hand side of the last equation adds together with correct sign the indices of all the equilibrium points, and  is simply $\sum_k{\mbox{ind}}_{x_k}(f)$, which from \eqref{eq:PHmainequation} is $\chi(M)$.

For a sphere $S^n$, the only nonzero homology groups are $H_0=H_n=\mathbb Z$, and so $r_0=r_n=1$, but otherwise $r_{\lambda}=0$. Further $\chi(S^n)=1+(-1)^n$. These properties are set out in \cite[see p.141]{matsumoto2002introduction}. For our purposes, we record what happens for the even dimension sphere $S^{2n}$:
\begin{corollary}
Adopt the same hypotheses as for Theorem \ref{thm:morsesmale}, save regarding the dimension of $\mathcal M$, and suppose that $\mathcal M$ is $S^{2n}$. Then there holds
\begin{align}\label{eq:MSsphere}
    c_0&\geq 1\\\notag
    c_1-c_0&\geq -1\\\notag
    c_2-c_1+c_0&\geq 1\\\notag&\vdots\\\notag
    c_{2n-1}-c_{2n-2}+\dots-c_0&\geq -1\\\notag
    c_{2n}-c_{2n-1}+\dots-c_1+c_0&=2
\end{align}
\end{corollary}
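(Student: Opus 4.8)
The plan is to obtain the corollary as a direct specialization of Theorem~\ref{thm:morsesmale}, taking the $n$-dimensional manifold $\mathcal M$ there to be the $2n$-dimensional sphere $S^{2n}$ (so the symbol $n$ in that theorem is replaced throughout by $2n$, which the corollary's hypotheses already permit since they revise only the dimension of $\mathcal M$). The one genuine input needed is the homology of the sphere: for $S^{2n}$ the only nonzero homology groups are $H_0(S^{2n})\cong H_{2n}(S^{2n})\cong\mathbb Z$, with all intermediate groups trivial, a standard fact recorded in \cite[p.~141]{matsumoto2002introduction}. Hence the Betti numbers are $r_0=r_{2n}=1$ and $r_\lambda=0$ for $0<\lambda<2n$, and the Euler characteristic is $\chi(S^{2n})=\sum_{\lambda=0}^{2n}(-1)^\lambda r_\lambda = 1+1 = 2$ (consistent with $1+(-1)^{2n}=2$). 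No verification of the Morse--Smale conditions or of finiteness of the equilibrium set is required, as these are inherited from the hypotheses of Theorem~\ref{thm:morsesmale}.

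Next I would evaluate the right-hand sides of the chain \eqref{eq:morsesmaleineq}. Writing $R_k := r_k-r_{k-1}+\cdots+(-1)^k r_0$, the vanishing of all Betti numbers except $r_0$ and $r_{2n}$ means that for $0\le k\le 2n-1$ only the $r_0$ term survives, so $R_k=(-1)^k r_0=(-1)^k$; and for $k=2n$ both surviving terms contribute, giving $R_{2n}=(-1)^{2n}r_0+r_{2n}=1+1=2$, which is also $(-1)^{2n}\chi(S^{2n})$, as the theorem predicts. Substituting these into \eqref{eq:morsesmaleineq} turns the first $2n$ lines into $c_0\ge 1$, $c_1-c_0\ge -1$, $c_2-c_1+c_0\ge 1$, and in general $c_k-c_{k-1}+\cdots+(-1)^k c_0\ge (-1)^k$ for $k=0,1,\dots,2n-1$, and turns the final line into the equality $c_{2n}-c_{2n-1}+\cdots-c_1+c_0 = 2$, where the sign of $c_0$ is $(-1)^{2n}=+1$ and that of $c_1$ is $(-1)^{2n-1}=-1$. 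These are exactly the relations displayed in \eqref{eq:MSsphere}, which finishes the proof.

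There is essentially no hard step: once Theorem~\ref{thm:morsesmale} is available, the corollary is a mechanical substitution. The only points needing a little care are (i) invoking the homology of the \emph{even}-dimensional sphere $S^{2n}$, as opposed to an odd sphere where $\chi=0$ and the parity pattern of the inequalities would differ, and (ii) tracking the alternating signs so that the parity of $2n$ is handled correctly — in particular so that the last line comes out as an equality with value $2$ (reflecting $\chi(S^{2n})=2$), beginning with $+c_{2n}$ and ending with $+c_0$. Everything else is routine bookkeeping.
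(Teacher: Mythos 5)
Your proof is correct and matches the paper's own treatment: the corollary is obtained exactly as you describe, by substituting the Betti numbers $r_0=r_{2n}=1$, $r_\lambda=0$ otherwise, and $\chi(S^{2n})=2$ (the homology facts cited from \cite[p.~141]{matsumoto2002introduction}) into the chain of inequalities \eqref{eq:morsesmaleineq}. Your sign bookkeeping for the even-dimensional case agrees with the paper's displayed result, so there is nothing to add.
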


\subsection{Application to the bivirus problem}

We now indicate how these inequalities affect the bivirus equation results. We actually  apply them to the system obtained by transforming the bivirus equations in the region $\tilde\Xi_{2n}$ to the sphere $S^{2n}$. Such a development must rest on an assumption the bivirus system is Morse-Smale. However,  as far as the authors are aware, there are no formal results which establish that the property holds for a bivirus system, and this paper does not provide an explicit proof. Rather, as we now argue, the known properties of the bivirus system makes it \textit{reasonable to assume} that it is Morse-Smale, and we do so for the purposes of advancing our counting approach.

Recall that boundary points of the bivirus equations correspond to the north pole of the sphere, which is a source. Condition 1 of the Morse-Smale system definition is trivially fulfilled. Condition 2 is effectively covered by genericity of $D^i$ and $B^i$, see Theorem \ref{app:pf_thm_hyperbolicity}. Formal demonstration of Condition 3 is however \textit{not} possible. However, Condition 3 also appears as if it is generically satisfied.  We note that it is well understood that smooth dynamical systems defined on a manifold are generic in a particular sense, see~\cite{smale1967differentiable}. Genericity here actually refers to the notion of possibility perturbing the vector field in a small region by arbitrarily small bumps, rather than changing the numerical values of the parameters appearing in the differential equations.

Interpreted for the bivirus system as opposed to the system defined on the sphere, the genericity results mean that the precise models containing for example quadratic terms in the state within the vector field might not be Morse-Smale, though an arbitrarily small perturbation will be Morse-Smale.  We also note that the reference~\cite{robbin1981algebraic}  proves that polynomial vector fields of a given degree are generically Morse-Smale. In the case of bivirus systems, despite the fact that the matrices $D^i,B^i$ are generic, the associated system is however not generic within the set of \textit{all} quadratic vector fields: observe that the component of the vector field associated with, for example, $\dot x^1_i$ includes $x^1_i\beta^1_{ij}x^1_j$ and $x^2_i\beta^1_{ij}x^1_j$, i.e two of the quadratic terms have the same coefficient. Apart from this, allowing $B^i$ with zero entries (albeit with an irreducibility assumption) is also a form of specialization moving the system outside the scope of those covered in \cite{robbin1981algebraic}.

As for the fourth requirement characterising a Morse-Smale system, recall that because the bivirus system is a monotone system, the only limit cycles possible are those which are nonattractive, see \cite[see p.95] {smith1988monotone_survey}.   However, the mere occurrence of any type of limit cycle in a bivirus model appears to be a nongeneric property:  limit cycles, nonattractive or otherwise, have never been observed in the bivirus literature. Hence we will assume that for a generic bivirus system (and its mapping onto the sphere $S^{2n}$), there are no limit cycles of any type.

\subsection{Additional insights for the bivirus problem}

Previously, we showed how a counting result involving equilibria for the bivirus problem could be obtained by relating the problem to trajectories on a sphere and appealing to the Poincar\'e-Hopf formula. The key adjustment was to use a single source equilibrium on the sphere to account for the healthy equilibrium in the bivirus problem, together with any unstable boundary equilibrium, of which there can be zero, one or two. Stable equilibria on the sphere include those corresponding to stable boundary equilibria for the bivirus problem, of which there can be two, one or zero (corresponding to zero, one or two unstable boundary equilibria). 

We sum up the second main counting result of the paper, flowing from Morse-Smale theory and which we have just proved, as follows.
\begin{theorem}\label{thm:bivirusMS}
With notation as previously defined, consider the equation set \eqref{eq:bivirus} and suppose that Assumptions \ref{ass:constraints} and  \ref{ass:unstablehealthy} hold. Suppose further that the four conditions in \Cref{ass:MS} all hold and hence \eqref{eq:bivirus} is a Morse-Smale system, thereby guaranteeing that equilibria of the equations in the region of interest $\Xi_{2n}$ are all nondegenerate and thus finite in number, and hyperbolic. 
Let $c_{\lambda}$ denote the number of equilibria in the region of interest whose associated Jacobian has $\lambda$ open left-half plane eigenvalues.
The healthy equilibrium $({\bf{0}}_n,{\bf{0}}_n)$ and any unstable boundary equilibrium together contribute an allowance of 1 to $c_0$, and any coexisting source makes a further contribution of 1. Then the equation set \eqref{eq:MSsphere} holds. 
\end{theorem}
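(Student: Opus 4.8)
The plan is to carry the bivirus system from the perturbed region $\tilde\Xi_{2n}$ onto the sphere $S^{2n}$ exactly as was done in \Cref{sec:main}, to verify that the resulting system on $S^{2n}$ is itself Morse-Smale, to apply the specialization of \Cref{thm:morsesmale} to $S^{2n}$ (namely the inequalities \eqref{eq:MSsphere}), and finally to translate the sphere equilibrium counts back into statements about the equilibria of \eqref{eq:bivirus}. The first step requires nothing new: by \Cref{lem:diffeo} there is a diffeomorphism $F_0:\tilde\Xi_{2n}^{\circ}\to S^{2n}\setminus N$ taking the bivirus vector field to a smooth vector field on the punctured sphere, preserving equilibria together with their indices, their hyperbolicity, and even the eigenvalues of their Jacobians; and, because $\partial\tilde\Xi_{2n}$ is inward-pointing by the analysis of \Cref{ssec:traj_01,ssec:traj_02}, this vector field extends smoothly to all of $S^{2n}$ with the north pole $N$ a source. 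Write $\dot y=g(y)$ for this extended system. Note also that the hyperbolicity assumed in \Cref{ass:MS} immediately yields, via compactness of $\Xi_{2n}$ and the fact that a hyperbolic equilibrium is nondegenerate and hence isolated, the stated finiteness of the equilibrium set.

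Next I would check the four conditions of \Cref{ass:MS} for $\dot y=g(y)$ on $S^{2n}$. Condition~1 is automatic since $S^{2n}$ is compact, so trajectories are complete with no finite escape. Condition~2 holds because each equilibrium $y_k\ne N$ is the $F_0$-image of a hyperbolic equilibrium of \eqref{eq:bivirus} (hyperbolicity being part of \Cref{ass:MS} and transferred by $F_0$ per \Cref{lem:diffeo}), while $N$, being a source, is hyperbolic. Condition~4 follows since the absence of limit cycles for \eqref{eq:bivirus} transfers directly and one-point compactification creates no new periodic orbit. The one substantive check is Condition~3. Away from $N$ the transversality of stable and unstable manifolds of $\dot y=g(y)$ is the $F_0$-image of the transversality assumed for \eqref{eq:bivirus} on $\tilde\Xi_{2n}^{\circ}$, and diffeomorphisms preserve transversality; for intersections involving $N$ one notes that $\mathcal W_s(N)=\{N\}$ is $0$-dimensional, that $N\notin\mathcal W_u(y_k)$ for $y_k\ne N$ (no nonconstant trajectory converges to $N$ in backward time, since $N$ is a source), so $\mathcal W_s(N)\cap\mathcal W_u(y_k)=\varnothing$, while $\mathcal W_u(N)$ is open, hence $2n$-dimensional, and so meets every $\mathcal W_s(y_j)$ transversally. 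Thus $\dot y=g(y)$ is Morse-Smale on $S^{2n}$.

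Finally I would invoke the $S^{2n}$-specialization of \Cref{thm:morsesmale}, obtained from $r_0=r_{2n}=1$, $r_\lambda=0$ otherwise and $\chi(S^{2n})=2$, to conclude that the counts $c_\lambda^{S}$ of equilibria of $\dot y=g(y)$ with exactly $\lambda$ open-left-half-plane Jacobian eigenvalues satisfy \eqref{eq:MSsphere}. It then remains to match counts. By the eigenvalue-preservation in \Cref{lem:diffeo}, the equilibria of $\dot y=g(y)$ on $S^{2n}\setminus N$ are in eigenvalue-preserving bijection with the equilibria of \eqref{eq:bivirus} in $\tilde\Xi_{2n}^{\circ}$; the perturbation of \Cref{sec:main} moves any locally exponentially stable boundary equilibrium (a sink, counted in $c_{2n}^{S}$) into $\tilde\Xi_{2n}^{\circ}$, adds no new equilibrium, and leaves the healthy equilibrium and every unstable boundary equilibrium on $\partial\tilde\Xi_{2n}$, where they collapse onto the single source $N$ that contributes $1$ to $c_0^{S}$; and a coexistence equilibrium that happens to be a source likewise contributes $1$ to $c_0^{S}$. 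Hence, under precisely the bookkeeping convention in the theorem statement (the healthy and unstable boundary equilibria jointly carrying an allowance of $1$ in $c_0$, with each coexisting source adding a further $1$), one has $c_\lambda=c_\lambda^{S}$ for every $\lambda$, and \eqref{eq:MSsphere} transfers verbatim to the bivirus counts. The main obstacle is Condition~3: transversality is an intrinsically global hypothesis that, while it passes cleanly through $F_0$ and is harmless at $N$, cannot be established for \eqref{eq:bivirus} itself and must simply be assumed, as it is in \Cref{ass:MS}; everything else reduces to the bookkeeping just described.
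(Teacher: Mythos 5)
Your proposal is correct and follows essentially the same route as the paper: transfer the system to $S^{2n}$ via the diffeomorphism of \Cref{lem:diffeo} with the boundary collapsed to a source at the north pole, apply the Morse--Smale inequalities for the even-dimensional sphere, and account for the healthy/unstable boundary equilibria through the single contribution of the north pole to $c_0$. The only difference is that you spell out the verification that the induced system on $S^{2n}$ is itself Morse--Smale (in particular the automatic transversality at the north pole, since $\mathcal W_s(N)=\{N\}$ and $\mathcal W_u(N)$ is open), a point the paper leaves implicit in its surrounding discussion.
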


Using these counting conditions, we can obtain further insights into the nature of the equilibria.

\begin{corollary}\label{cor:MScount}
Adopt the hypotheses of Theorem \ref{thm:bivirusMS}. Then 
\begin{enumerate}
    \item 
    There always exists a stable equilibrium.
    \item
    If there are two unstable boundary equilibria there is a stable coexistence equilibrium.
    \item
    If every inequality in the set \eqref{eq:MSsphere} is an equality, then $c_0=1,c_1=c_2=\dots=c_{2n-1}=0,c_{2n}=1$  and conversely, and the only corresponding equilibrium configurations are one stable and one unstable boundary equilibrium with no interior equilibrium, or two unstable boundary equilibrium and one interior stable equilibrium; both configurations are possible. 
\end{enumerate}
\end{corollary}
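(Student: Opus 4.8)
The plan is to read off each of the three claims directly from the Morse–Smale inequalities \eqref{eq:MSsphere} for the sphere $S^{2n}$, combined with the bookkeeping convention of \Cref{thm:bivirusMS} (the source at the north pole contributes $1$ to $c_0$, stable boundary equilibria are counted as genuine entries of $c_0$, and unstable boundary equilibria are not counted).

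\textbf{Claim 1.} The first inequality in \eqref{eq:MSsphere} reads $c_0 \geq 1$. I would argue that this forces the existence of a stable equilibrium in the following sense: $c_0$ counts equilibria on the sphere whose Jacobian has $0$ eigenvalues in the open left-half plane, i.e.\ sources. One of these is guaranteed, namely the north pole. But the north pole is an artifact amalgamating the healthy equilibrium and any unstable boundary equilibria; it is not itself a stable equilibrium of the bivirus system. So I must show $c_0 \geq 2$ is forced whenever the ``allowance of 1'' from the north pole is the \emph{only} contribution, which would be a contradiction to some other inequality. The cleaner route: combine $c_0 \geq 1$ with the fact that $c_{2n}$ counts sinks (stable equilibria, since $2n$ left-half-plane eigenvalues means all eigenvalues are stable). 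From the last (equality) line, $\sum_\lambda (-1)^\lambda c_\lambda = 2$; since the north pole already supplies $+1$ to the $c_0$ term, the remaining equilibria of the genuine bivirus system must contribute $+1$ in the alternating sum. If there were \emph{no} stable equilibrium at all, then $c_0$ (genuine part) $= c_{2n} = 0$; I would then need to show the remaining inequalities cannot be satisfied. Actually the quickest argument is: a stable equilibrium is a sink, counted in $c_{2n}$; by the general theory of gradient-like / Morse–Smale flows on a compact manifold, or more elementarily by noting that the flow on the compact sphere must have at least one attractor, $c_{2n} \geq 1$. One can extract $c_{2n}\geq 1$ from \eqref{eq:MSsphere} by taking the last equality and the second-to-last inequality $c_{2n-1}-c_{2n-2}+\dots-c_0 \geq -1$; subtracting gives $c_{2n} \geq 2 - 1 = 1$. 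This gives a genuine stable equilibrium of the bivirus system (a sink is necessarily in the interior or a stable boundary equilibrium, not the north pole which is a source).

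\textbf{Claim 2.} Suppose both boundary equilibria are unstable. Then neither is counted, and the only contribution to $c_0$ is the allowance of $1$ from the north pole, together with any coexistence source. From Claim~1's argument, $c_{2n} \geq 1$: there is a sink. A sink of the bivirus system is an equilibrium with all eigenvalues in the open left-half plane; under the present hypothesis it cannot be a boundary equilibrium (both are unstable) nor the healthy equilibrium (unstable by \Cref{ass:unstablehealthy}), hence it is a coexistence equilibrium, and it is stable. I would simply invoke $c_{2n} \geq 1$ as established above and observe that the only equilibria available to realize it are coexistence equilibria.

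\textbf{Claim 3.} Here I would work through the equivalence in both directions. If $c_0 = 1$, $c_1 = \dots = c_{2n-1} = 0$, $c_{2n} = 1$, then every line of \eqref{eq:MSsphere} is checked to be an equality (the alternating partial sums are $1, -1, 1, \dots$, matching the right-hand sides exactly, and the final line gives $1 - 0 + \dots - 0 + 1 = 2$), so the forward direction of ``all equalities $\Rightarrow$ this configuration'' needs the converse. For the converse, I would proceed by downward or upward induction on the index $\lambda$ through the chain of equalities: the first equality $c_0 = 1$ pins $c_0$; subtracting consecutive equalities, the $\lambda$-th minus the $(\lambda-1)$-th gives $c_\lambda = r_\lambda$ for each $\lambda$ (since the Betti numbers telescope the same way), and $r_1 = \dots = r_{2n-1} = 0$, $r_0 = r_{2n} = 1$ for the sphere. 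Hence $c_\lambda = 0$ for $1 \leq \lambda \leq 2n-1$ and $c_0 = c_{2n} = 1$. Finally I translate this back to the bivirus system: $c_0 = 1$ means the \emph{only} source (on the sphere) is the north pole — so there is no coexistence source, and at most one unstable boundary equilibrium contributes to the north-pole allowance (two would require\ldots actually two unstable boundary equilibria are still amalgamated into the single north pole, so $c_0 = 1$ is consistent with either zero, one, or two unstable boundary equilibria). The constraint is rather that $c_{2n} = 1$: exactly one sink. And $c_\lambda = 0$ for intermediate $\lambda$ means there are no saddle-type coexistence equilibria. Enumerating: if both boundary equilibria are unstable, the single sink must be a coexistence equilibrium (Claim 2), giving the configuration ``two unstable boundary equilibria, one stable coexistence equilibrium''; if one boundary equilibrium is stable, it is the unique sink, so $c_{2n}=1$ forces no stable coexistence equilibrium, and $c_\lambda = 0$ for $1 \leq \lambda \leq 2n-1$ forces no coexistence equilibria at all, giving ``one stable and one unstable boundary equilibrium, no interior equilibrium''; if both boundary equilibria are stable, then $c_{2n} \geq 2$, contradicting $c_{2n}=1$, so this case is excluded. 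That both surviving configurations actually occur is witnessed by the $n=1$ and $n=2$ examples in \Cref{sec:examples} (and the classical single-virus-dominance results, e.g.\ $D^1=D^2=I_n$, $B^1 > B^2$, cited earlier).

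\textbf{Main obstacle.} The routine parts are the telescoping of the equality chain and the Betti-number values for $S^{2n}$. The step requiring the most care is the translation dictionary between ``equilibria on the sphere counted by $c_\lambda$'' and ``equilibria of the bivirus system'': I must consistently apply the convention that the north pole is a single source absorbing the healthy equilibrium and \emph{all} (possibly two) unstable boundary equilibria, that a stable boundary equilibrium is counted as a genuine $c_0$-entry after the perturbation $\Xi_{2n} \to \tilde\Xi_{2n}$, and that a sink ($c_{2n}$) can never be the north pole. Keeping this bookkeeping straight — especially verifying that ``$c_0 = 1$'' does \emph{not} by itself bound the number of unstable boundary equilibria, whereas ``$c_{2n} = 1$'' does do the real work in ruling out the two-stable-boundary configuration — is where I expect the argument to need the most explicit attention.
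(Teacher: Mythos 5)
Your proposal is correct and follows essentially the same route as the paper: the paper likewise obtains $c_{2n}\geq 1$ by combining the final equality of \eqref{eq:MSsphere} with the second-to-last inequality, deduces claim 2 from claim 1 together with the allowed stability patterns of the boundary equilibria, and handles claim 3 by solving the equality chain and enumerating the admissible configurations. Your write-up is merely more explicit than the paper's terse proof, and your bookkeeping observation that a sink counted in $c_{2n}$ can never be the north-pole source is precisely the point needed to convert $c_{2n}\geq 1$ into a genuine stable equilibrium of the bivirus system.
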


\begin{proof}
To establish the first claim, observe that addition of the last two equations in \eqref{eq:MSsphere} yields $c_{2n}\geq 1$, implying there is at least one stable equilibrium. The second claim is a consequence of the first, and the allowed patterns of stability for the boundary equilibria. 

For the third claim, the values of $c_i$ are trivial to establish if the inequalities are in fact all equalities. The consequential configuration restrictions are immediate. 
\end{proof}

Examples illustrating the above claims are provided in the sequel.

We can easily make some specific remarks applying to the case $n=2$.
It is shown in \cite{ye2021_bivirus} that in addition to the two boundary equilibria, there can be zero, one or two coexistence equilibria.

Unstable boundary equilibria make no difference to the value of $c_0$ while stable boundary equilibria add to the value of $c_4$. Coexistence equilibria may add to the value of any $c_i$. 

We now set out the full range of possibilities for different types of equilibria.

\begin{enumerate}
    \item 
    Suppose there are no coexistence equilibria. Necessarily, $c_0=1$, corresponding to the healthy equilibrium, and any unstable boundary equilibria. However, with no coexistence equilibria,  the first claim of Corollary \ref{cor:MScount} then implies there must be at least one stable (and could be two) boundary equilibrium, corresponding to $c_4$ being $1$ or $2$. It is easily verified that only the value $c_4=1$ is consistent with \eqref{eq:MSsphere}. Thus one boundary equilibrium is stable and the other is unstable. (This conclusion is also available in \cite[see Corollary 3.16]{ye2021_bivirus}.) There holds $\{c_0,c_1,c_2,c_3,c_4\}=\{1,0,0,0,1\}$. 
    \item
    Suppose there is one coexistence equilibrium.  First, this is consistent with there being two unstable boundary equilibria and a stable coexistence equilibrium, i.e. $\{c_0,c_1,c_2,c_3,c_4\}=\{1,0,0,0,1\}$. There are no other possibilities with two unstable boundary equilibria.  Second, it is also consistent with there being two stable boundary equilibria, and one coexistence equilibrium which is unstable with precisely one unstable eigenvalue for its Jacobian, implying  $\{c_0,c_1,c_2,c_3,c_4\}=\{1,0,0,1,2\}$. There are no other possibilities with two stable boundary equilibria. There are no possibilities at all with one stable and one unstable boundary equilibrium. Conversely, if there are two stable boundary equilibria, or two unstable boundary equilibria, there is precisely one coexistence equilibrium.  Thus the stability properties of the single coexistence equilibrium are governed by the stability properties of the boundary equilibria, which must be both stable or both unstable. 
    \item
    Now suppose there are two coexistence equilibria. By the preceding point, there is necessarily one stable and one unstable boundary equilibrium.  One can  check the various possibilities to conclude that the following possibilities exhaust those which are consistent with \eqref{eq:MSsphere}
    \begin{align*}
        \{c_0,c_1,c_2,c_3,c_4\}\in \{\{1,0,0,1,2\},\{2,1,0,0,1\},\{1,1,1,0,1\},\{1,0,1,1,1\} \}
    \end{align*}
    The first possibility corresponds to one stable coexistence equilibrium, and the second coexistence equilibrium having a single positive eigenvalue of its Jacobian. The second possibility corresponds to two unstable coexistence equilibria, with one being a source, and the other having three unstable (positive real part) Jacobian eigenvalues. The third possibility corresponds to both coexistence equilibrium being unstable, with three and two unstable eigenvalues of the Jacobian. The last possibility captures a situation with two unstable coexistence equilibria, with one and two unstable eigenvalues of the Jacobian.
\end{enumerate}

\section{Numerical examples}\label{sec:examples}
We now use two numerical examples to illustrate some complex equilibria patterns and the Morse inequalities developed in \Cref{thm:bivirusMS}.

\subsection{Example 1} We return to the $n=4$ example presented in \Cref{ssec:n4_example}, recalling that we concluded below \Cref{cor:ph_count} there exist at least 5 coexistence equilibria (two being stable and identified in \Cref{ssec:n4_example}, and at least three being unstable).

We constructed this example by taking two separate $n=2$ systems (in which case it is possible to analytically determine all equilibria~\cite{ye2021_bivirus}) and weakly coupling them together via the $1\times 10^{-3}$ terms in $B^1$ and $B^2$ --- more precisely, via a homotopy. One can either use a numerical solver, or a gradient descent algorithm to locate the \textit{unstable} coexistence equilibria. In the latter approach, one must start the algorithm sufficiently close to an unstable equilibrium --- our knowledge that one expects isolated and hyperbolic equilibria, viz. \Cref{thm:finiteness,app:pf_thm_hyperbolicity}, establishes that the joined $n=4$ system will have equilibria which are perturbations of the separate $n=2$ systems, providing the critical starting location information. The technical details are beyond the scope of this paper, and thus omitted. 

Using such a gradient descent algorithm, we located 5 unstable coexistence equilibria $(\tilde x^1, \tilde x^2)$ which are given by
\begin{align*}
    &\left(\begin{bmatrix}
    0.3478\\
    0.2662\\
    0.2298\\
    0.3899
    \end{bmatrix},\begin{bmatrix}
    0.2298\\
    0.3899\\
    0.3478\\
    0.2662
    \end{bmatrix}\right),\;
    \left(\begin{bmatrix}
    0.3574\\
    0.2761\\
    0.0039\\
    0.0084
    \end{bmatrix}, \begin{bmatrix}
    0.2211\\
    0.3785\\
    0.6109\\
    0.6079
    \end{bmatrix}\right),\;
    \left(\begin{bmatrix}
    0.0055\\
    0.0032\\
    0.2388 \\
    0.4016
    \end{bmatrix}, 
     \begin{bmatrix}
    0.5596 \\
    0.7119 \\
    0.3379\\
    0.2563
    \end{bmatrix}\right),\nonumber \\
    &\left(\begin{bmatrix}
    0.3379\\
    0.2563\\
    0.5596\\
    0.7119\\
    \end{bmatrix}, 
     \begin{bmatrix}
    0.2388\\
    0.4016\\
    0.0055\\
    0.0032
    \end{bmatrix}\right),\;
    \left(\begin{bmatrix}
    0.6109\\
    0.6079\\
    0.2211\\
    0.3785\\
    \end{bmatrix}, 
     \begin{bmatrix}
    0.0039\\
    0.0084\\
    0.3574\\
    0.2761
    \end{bmatrix}\right).
\end{align*}
The Jacobian of the first listed equilibrium has two unstable eigenvalues, while the Jacobian of all other equilibria have just one unstable eigenvalue.

In terms of the Morse inequalities for the $8$ dimensional system, we thus have $c_0 = 1$ (the healthy equilibrium), $c_8 = 4$ (the two stable boundary equilibria and two stable coexistence equilibria from \Cref{ssec:n4_example}), $c_7 = 4$, $c_6 = 1$, and $c_i = 0$ for $i = 1,2,\hdots ,5$. It is easily verified that all inequalities (and the final equality) in \eqref{eq:MSsphere} hold.

\subsection{Example 2} We next present another $n=4$ example differing from that presented in \Cref{ssec:n4_example}, with $D^1 = D^2 = I$ and 
\begin{equation}
    B^1=\begin{bmatrix} 1.6&1& 0.001 & 0.001 \\ 1&1.6 & 0.001& 0.001 \\ 0.001 & 0.001 & 1.7&1\\0.001& 0.001 & 1.2&0.5
    \end{bmatrix},
    \quad
    B^2=\begin{bmatrix} 2.1&0.156& 0.001 & 0.001 \\ 3.0659&1.1 & 0.001& 0.001 \\ 0.001 & 0.001 & 1.6&1\\0.001& 0.001 & 1.2&0
    \end{bmatrix}.
\end{equation}

The boundary equilibria $(\bar x^1, \vect 0_4)$ and $(\vect 0_4, \bar x^2)$ are locally exponentially stable and unstable, respectively, and given by:
\begin{equation}
    \left(\begin{bmatrix}
    0.6158\\
    0.6155\\
    0.6030\\
    0.4927
    \end{bmatrix},\vect 0_4\right),\qquad \left(\vect 0_4,\begin{bmatrix}
    0.5651\\
    0.7163\\
    0.5683\\
    0.4059
    \end{bmatrix}\right).
\end{equation}
There are two coexistence equilibria given by
\begin{align*}
    &\left(\begin{bmatrix}
    0.0095\\
    0.0056\\
    0.5965\\
    0.4875
    \end{bmatrix},\begin{bmatrix}
    0.5555\\
    0.7089\\
    0.0062\\
    0.0044
    \end{bmatrix}\right),\qquad
    \left(\begin{bmatrix}
    0.3391\\
    0.2576\\
    0.5998\\
    0.4901
    \end{bmatrix}, \begin{bmatrix}
    0.2376\\
    0.4001\\
    0.0031\\
    0.0022
    \end{bmatrix}\right).
\end{align*}
The former is locally exponentially stable, while the latter is unstable and its Jacobian has a single unstable eigenvalue. Sample trajectories for convergence to the stable boundary equilibrium and stable coexistence equilibrium are given in \Cref{fig:example2}.

\begin{figure*}[!htp]
\begin{minipage}{0.475\linewidth}
\centering
\subfloat[Boundary equilibrium with virus~1 endemic\label{fig:n4_example2_boundary}]{\includegraphics[width=\columnwidth]{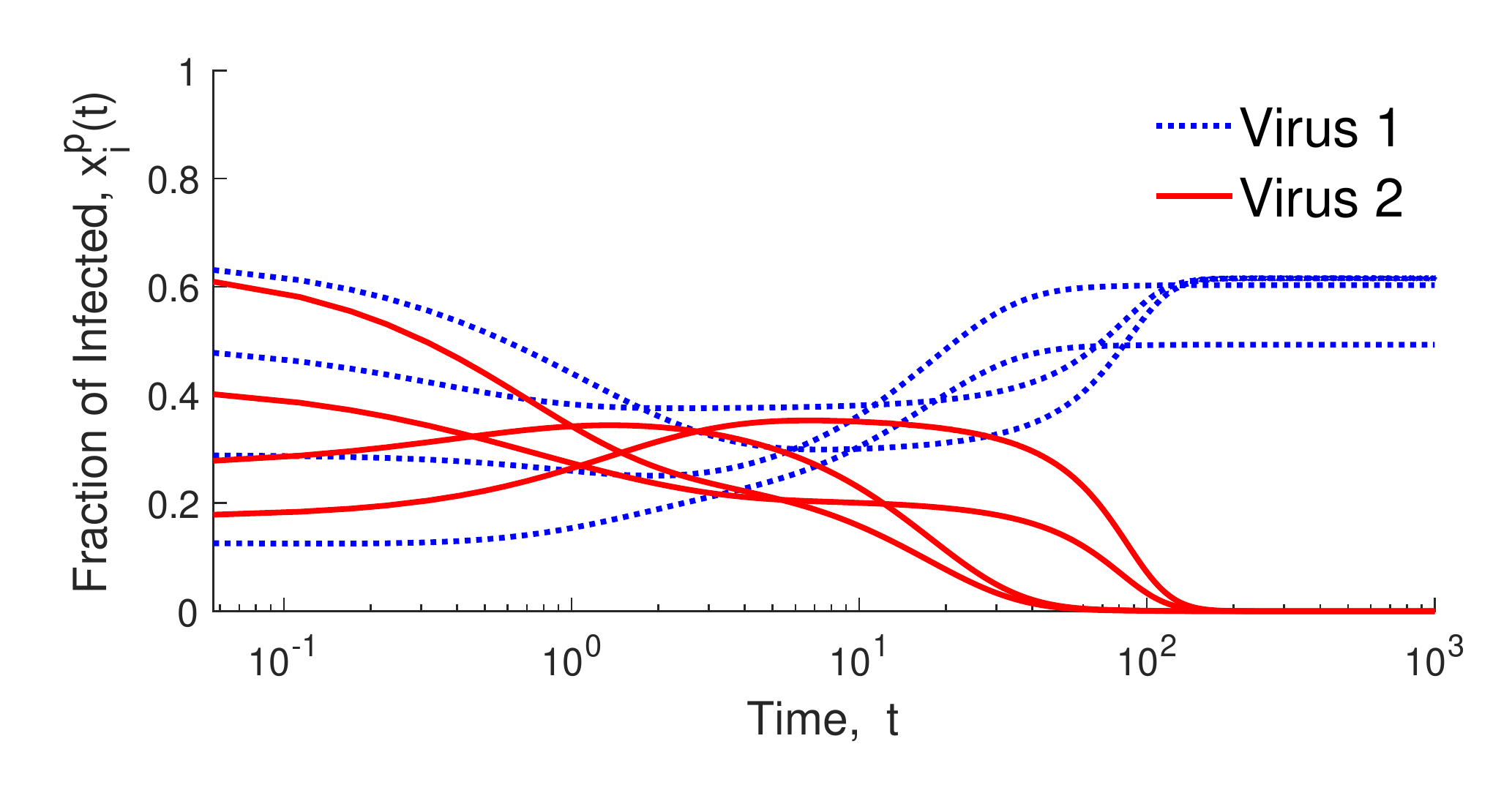}}
\end{minipage}
\hfill
\begin{minipage}{0.475\linewidth}
\centering\subfloat[Stable coexistence equilibrium]{\includegraphics[width=\columnwidth]{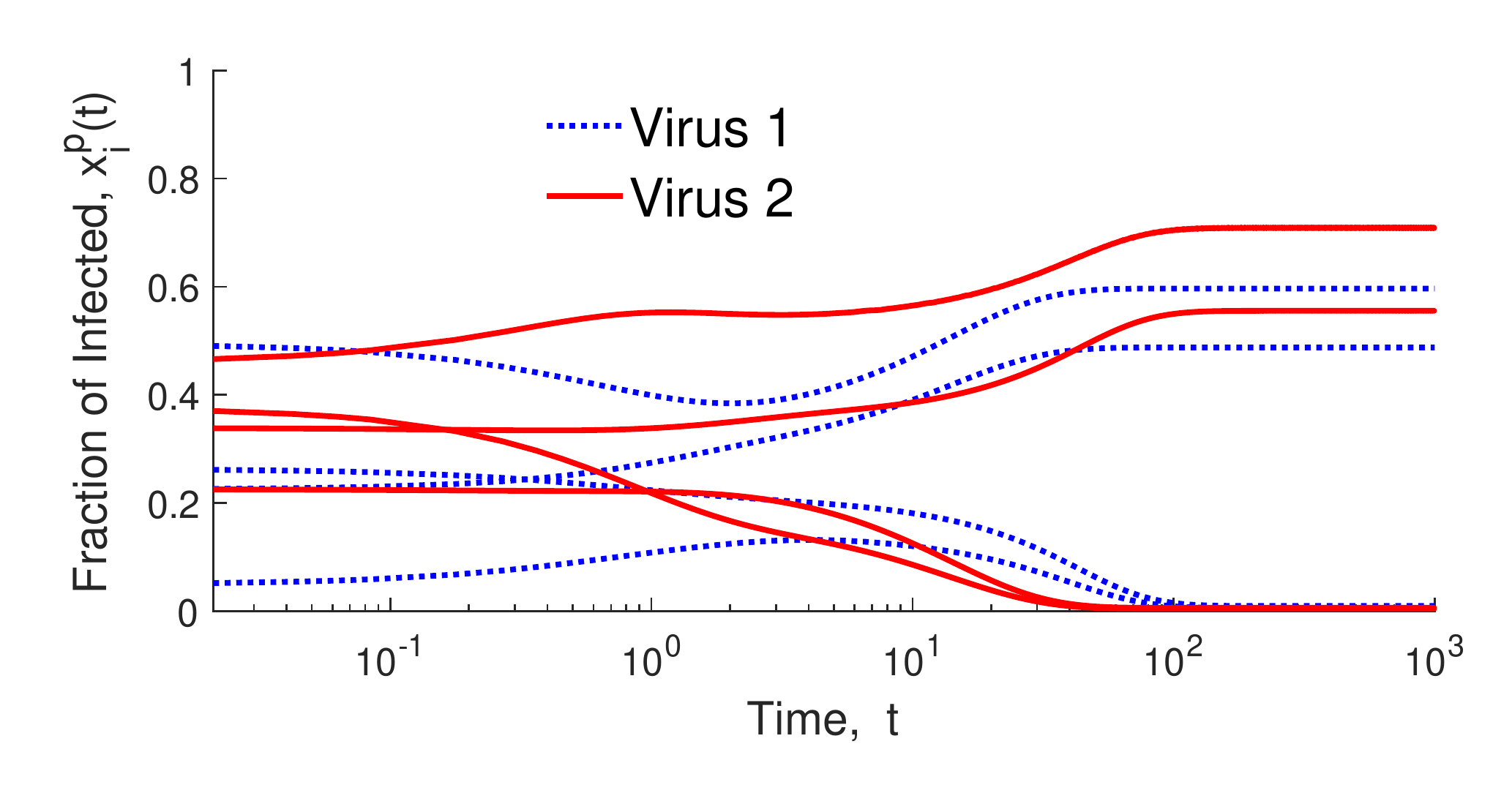}\label{fig:n4_example2_coexistence}}
\end{minipage}
\caption{Sample trajectories of \textit{Example~2} system, with different initial conditions. As is evident, convergence occurs to two different attractive (locally exponentially stable) equilibria, dependent on the initial conditions.  }\label{fig:example2}
\end{figure*}

In terms of the Morse inequalities for the $8$ dimensional system, we thus have $c_0 = 1$ (the healthy equilibrium and the unstable boundary equilibrium $(\vect 0_4, \bar x^2)$), $c_8 = 2$ (the stable boundary equilibrium $(\bar x^1, \vect 0_4)$ and the stable coexistence equilibrium), $c_7 = 1$ (the unstable coexistence equilibrium), and $c_i = 0$ for $i = 1,2,\hdots ,6$. Again, all inequalities (and the final equality) in \eqref{eq:MSsphere} hold.

We conclude by remarking that a recent preprint provided a numerical example of a bivirus system (modified to have additional nonlinearities in the dynamics) with multiple attractive coexistence equilibria~\cite{doshi2022convergence_bivirus}. To the best of our knowledge, our work and \cite{doshi2022convergence_bivirus} are the first to demonstrate multiple coexistence equilibria for networked bivirus models. However, \cite{doshi2022convergence_bivirus} only provides a single numerical example and is limited to simulations of convergence to stable coexistence equilibria. Here, we provide significant theoretical advances that establish counting results on how many coexistence equilibria there may be, and their stability properties (including unstable equilibria and the number of unstable eigenvalues of their Jacobian).

 \section{Conclusions}\label{sec:conclusions}
In this paper, we applied the Poincar\'e-Hopf Theorem to the SIS networked bivirus model, which required significant adaptation due to various complexities of the bivirus dynamics. We then applied Morse inequalities, under the assumption that the bivirus system was a Morse-Smale system. Through these methods, we obtained a set of counting results which, given different stability configurations of boundary equilibria, yield lower bounds on the number of coexistence equilibria, and importantly, information about the number of stable eigenvalues of their Jacobian matrices. We provided numerical examples to illustrate the results, and provide evidence of the highly complex coexistence equilibria patterns possible. In future work, we aim to extend our approach to multivirus models with three or more competing viruses, and identify explicit relations between the parameter matrices $D^i, B^i$ and the number of coexistence equilibria.

\appendix

\section{Proof of \texorpdfstring{ \Cref{thm:finiteness}}{}}\label{app:pf_thm_finiteness}

The introduction below of the Parametric Transversality Theorem, the principal tool in the proof, requires the preliminary calculation of certain Jacobians. We will first consider the case where the $B^i$ are fixed, to demonstrate that for almost all allowed $D^i$, equilibria are nondegenerate and then the case with $D^i$ fixed and $B^i$ variable. Finiteness of the number of zeros will follow straightforwardly from the nondegeneracy conclusion. 

Without loss of generality, we will assume there is some fixed positive $\bar d<1$ such that all diagonal entries of the $D^i$ lie in $(\bar d,\bar d^{-1})$. We will separately consider equilibrium points in the interior of $\Xi_{2n}$ and those on its boundary, beginning with the former. Let $\mathcal X,\mathcal D$ denote manifolds which are respectively the interior of $\Xi_{2n}$ (and thus an open set) and the set of allowed $D^i$ consistent with the choice of $\bar d$. Consider for any $x=[(x^1)^{\top},(x^2)^{\top}]^{\top}\in\mathcal X$ and $\delta=\mbox{vec}[D^1,D^2]\in\mathcal D$ the map
\[
f_{\delta}:\mathcal X\times\mathcal D\to\mathcal Y, (x,\delta)\mapsto y=\begin{bmatrix}[-D^1+(I_n-X^1-X^2)B^1]x^1\\
[-D^2+(I_n-X^1-X^2)B^2]x^2
\end{bmatrix}
\]

\begin{lemma}
With notation as above, the matrix $\frac{\partial f_{\delta}(x,\delta)}{\partial (x,\delta)}$ has full row rank at any coexistence equilibrium. 
\end{lemma}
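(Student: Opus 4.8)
The plan is to compute the Jacobian $\frac{\partial f_\delta(x,\delta)}{\partial(x,\delta)}$ as a block matrix and exhibit, inside it, a square submatrix that is obviously nonsingular at any coexistence equilibrium, thereby establishing full row rank. Writing $f_\delta = [(f^1)^\top,(f^2)^\top]^\top$ with $f^1 = [-D^1+(I_n-X^1-X^2)B^1]x^1$ and similarly for $f^2$, the Jacobian has $2n$ rows and $2n + 2n$ columns (the first $2n$ columns corresponding to $x=(x^1,x^2)$, the last $2n$ to $\delta = \mathrm{vec}[D^1,D^2]$). First I would isolate the part of the Jacobian coming from differentiation with respect to $\delta$: since $D^1$ enters $f^1$ only through the term $-D^1 x^1 = -\mathrm{diag}(x^1)\,\mathrm{diag}(\delta^1)\,\vect 1_n$ componentwise, we get $\frac{\partial f^1}{\partial \delta^1} = -\mathrm{diag}(x^1)$ and $\frac{\partial f^1}{\partial\delta^2} = 0$, and symmetrically $\frac{\partial f^2}{\partial\delta^2} = -\mathrm{diag}(x^2)$, $\frac{\partial f^2}{\partial\delta^1}=0$. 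Hence the $\delta$-block of the Jacobian is exactly the $2n\times 2n$ block-diagonal matrix $-\,\mathrm{diag}\big(\mathrm{diag}(x^1),\mathrm{diag}(x^2)\big)$.

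The key observation is then immediate: at a coexistence equilibrium $(\tilde x^1,\tilde x^2)$, the characterization recalled in the excerpt (item 3 below \Cref{ass:unstablehealthy}, citing \cite[Lemma~3.1]{ye2021_bivirus}) gives $\vect 0_n \ll \tilde x^i$, so every diagonal entry of $\mathrm{diag}(\tilde x^1)$ and $\mathrm{diag}(\tilde x^2)$ is strictly positive. Therefore the $2n\times 2n$ $\delta$-submatrix $-\,\mathrm{diag}\big(\mathrm{diag}(\tilde x^1),\mathrm{diag}(\tilde x^2)\big)$ is a diagonal matrix with nonzero diagonal, hence invertible. Since this is a $2n\times 2n$ (i.e. full-row-count) submatrix of the $2n\times 4n$ Jacobian and it is nonsingular, the full Jacobian $\frac{\partial f_\delta}{\partial(x,\delta)}$ has full row rank $2n$ at the coexistence equilibrium. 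I would note explicitly that we do not even need the $x$-block (which is just the system Jacobian $J$ of \eqref{eq:bivirus}) for this argument — the positivity of the state at a coexistence equilibrium does all the work.

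There is essentially no serious obstacle in this particular lemma: the only thing to be careful about is invoking the correct strict-positivity property of coexistence equilibria (and being explicit that this is where genericity-free structural information about the bivirus model is used), and making sure the block-differentiation of the $-D^i x^i$ term with respect to the diagonal entries of $D^i$ is written correctly. The genuinely delicate work — applying the Parametric Transversality Theorem, handling equilibria on the boundary $\partial\Xi_{2n}$ rather than in the interior, and translating "full row rank of $f_\delta$" into "$0$ is a regular value of the map for almost every $\delta$, hence its preimage is a $0$-manifold, hence finitely many nondegenerate zeros" — all happens in the surrounding proof of \Cref{thm:finiteness}, not in this lemma; here the statement reduces to the elementary rank computation above.
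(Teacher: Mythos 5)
Your proposal is correct and follows essentially the same route as the paper: differentiate with respect to the diagonal entries of $D^1,D^2$ to see that the $\delta$-block of the Jacobian is (up to sign) $\mathrm{diag}(X^1,X^2)$, which is nonsingular at a coexistence equilibrium because $\tilde x^1,\tilde x^2\gg \vect 0_n$, giving full row rank. The only cosmetic difference is that you keep the minus sign from $-D^i x^i$ (the paper drops it, which is immaterial for the rank argument).
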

\begin{proof}
We first observe that
\begin{small}
\begin{equation}\label{eq:jacxdelta}
    \frac{\partial f_{\delta}(x,\delta)}{\partial (x,\delta)}=\frac{\partial f_{\delta}(x^1,x^2,D^1,D^2)}{\partial (x^1,x^2,D^1,D^2)}=\left[\frac{\partial f_{\delta}(x^1,x^2,D^1,D^2)}{\partial (x^1,x^2)}\;\frac{\partial f_{\delta}(x^1,x^2,D^1,D^2)}{\partial (D^1,D^2)}\right]
\end{equation}
\end{small}
Observe next, using the diagonal nature of the $D^i$, that there holds (with $e_j$ denoting the $j$-th unit vector)
\begin{equation}
\frac{\partial f_{\delta}(x^1,x^2,D^1,D^2)}{\partial \delta^1_j}=x^1_je_j, \frac{\partial f_{\delta}(x^1,x^2,D^1,D^2)}{\partial \delta^2_j}=x^2_je_{j+n}
\end{equation}
It follows that 
\begin{equation}
     \frac{\partial f_{\delta}(x^1,x^2,D^1,D^2)}{\partial \delta}=\frac{\partial f_{\delta}(x^1,x^2,D^1,D^2)}{\partial (D^1,D^2)}=\begin{bmatrix}
    X^1&0\\0&X^2
    \end{bmatrix}
\end{equation}

At any equilibrium in $\Xi_{2n}$, this matrix has full row rank, as then does the Jacobian matrix $\frac{\partial f_{\delta}(x,\delta)}{\partial (x,\delta)}$ in \eqref{eq:jacxdelta} of which it is a submatrix. 
\end{proof}

Now, we carry out a similar calculation involving the $B^i$.

There are a finite number of patterns of zeros in the $B^i$ consistent with their being irreducible; this pattern can be associated with the edge set $\mathcal{E}$ defining the network structure for $\mathcal{G}^i$, while the nonzero values in $B^i$ define the edge weights. We will assume a fixed but arbitrary pattern in this set and prove that, excluding a zero measure set of the nonzero values, the desired result holds. Note that whatever the set is, in every row of $B^1$ and $B^2$ there is at least one positive entry,  a consequence of the irreducibility property (and equivalently the strong connectedness property of $\mathcal{G}^1$ and $\mathcal{G}^2$).  

Without loss of generality, we will also assume that there is some arbitrarily large but fixed real $\bar b$ such that  there always holds $\beta^1_{ij}<\bar b$ and $\beta^2_{ij}<\bar b$. Let $\mathcal X$ be as above, and let $\mathcal B$ denote the manifold which is the largest open set of allowed $[B^1\;B^2]$ consistent with the bound $\bar b$ and the zero-nonzero pattern of entries.  Being open, $\mathcal X$ and $\mathcal B$ are necessarily both manifolds, as is $\mathcal X\times \mathcal B$. Let 
\[
f_{\beta}:\mathcal X\times \mathcal B\to \mathcal Y, (x,\beta)\mapsto y=\begin{bmatrix}[-D^1+(I_n-X^1-X^2)B^1]x^1\\
[-D^2+(I_n-X^1-X^2)B^2]x^2
\end{bmatrix}
\] 
be a smooth map of manifolds, with  $x = [(x^1)^\top, (x^2)^\top]^\top$, $\beta=\text{vec}\, [B^1, B^2]$, and $\mathcal{Y} \subseteq \Xi_{2n}$.

\begin{lemma} 
With notation as above, the matrix $\frac{\partial f_{\beta}(x,\beta)}{\partial (x,\beta)}$ has full row rank at any coexistence equilibrium.
\end{lemma}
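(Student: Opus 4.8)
The plan is to mirror the proof of the previous lemma for the $D^i$ case. Since $\mathcal Y \subseteq \Xi_{2n}$ is $2n$-dimensional, ``full row rank'' means rank exactly $2n$, and it is enough to exhibit $2n$ linearly independent columns of $\frac{\partial f_{\beta}(x,\beta)}{\partial (x,\beta)}$; in fact I will produce them from within the block $\frac{\partial f_{\beta}}{\partial \beta}$ alone.

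First I would compute the relevant partial derivatives. Writing $Z = I_n - X^1 - X^2 = \diag(1 - x^1_1 - x^2_1,\ldots,1 - x^1_n - x^2_n)$, which does not depend on $\beta$, the two blocks of $f_\beta$ are $-D^1 x^1 + Z B^1 x^1$ and $-D^2 x^2 + Z B^2 x^2$. For an admissible index pair $(k,l)$, i.e.\ one for which $\beta^1_{kl}$ is permitted to be nonzero by the fixed zero--nonzero pattern, differentiating $B^1 x^1$ with respect to $\beta^1_{kl}$ gives $x^1_l e_k$, and the second block does not involve $B^1$; hence, with $e_k$ the $k$-th unit vector of $\mathbb R^n$,
\[
\frac{\partial f_{\beta}}{\partial \beta^1_{kl}} = (1 - x^1_k - x^2_k)\,x^1_l\begin{bmatrix} e_k \\ \vect 0_n \end{bmatrix},
\qquad
\frac{\partial f_{\beta}}{\partial \beta^2_{kl}} = (1 - x^1_k - x^2_k)\,x^2_l\begin{bmatrix} \vect 0_n \\ e_k \end{bmatrix},
\]
the $B^2$ entries appearing only in the second block.

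Next I would invoke the characterization of coexistence equilibria recalled earlier (cf.\ \cite[Lemma~3.1]{ye2021_bivirus}): at such an equilibrium $\vect 0_n \ll \tilde x^i \ll \vect 1_n$ for $i=1,2$ and $\tilde x^1 + \tilde x^2 \ll \vect 1_n$, so every scalar prefactor $(1 - \tilde x^1_k - \tilde x^2_k)\tilde x^i_l$ above is strictly positive. Because $B^1$ is irreducible (\Cref{ass:constraints}), every row $k$ of $B^1$ contains at least one admissible entry; selecting one such entry in each row produces, up to positive scaling, the vectors $[e_k^\top,\vect 0_n^\top]^\top$, $k=1,\ldots,n$, among the columns of $\frac{\partial f_{\beta}}{\partial \beta}$. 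Applying the same reasoning to $B^2$ produces $[\vect 0_n^\top,e_k^\top]^\top$, $k=1,\ldots,n$. Together these are the standard basis of $\mathbb R^{2n}$, so the columns of $\frac{\partial f_{\beta}}{\partial \beta}$, and a fortiori those of the full Jacobian $\frac{\partial f_{\beta}}{\partial (x,\beta)}$, span $\mathbb R^{2n}$, which is the claim.

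The computation is routine; the only point needing attention is that the argument uses the coexistence hypothesis essentially twice --- once through $\tilde x^i_l > 0$, to guarantee the selected columns are nonzero, and once through $\tilde x^1 + \tilde x^2 \ll \vect 1_n$, to guarantee the common diagonal factor $I_n - \tilde X^1 - \tilde X^2$ has strictly positive diagonal entries. Either can fail at a boundary equilibrium, which is exactly why boundary equilibria are handled by a separate argument in the remainder of the proof of \Cref{thm:finiteness}.
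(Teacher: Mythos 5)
Your proposal is correct and follows essentially the same route as the paper: both compute the $\partial/\partial\beta$ block of the Jacobian, use irreducibility to get at least one admissible entry per row of each $B^i$, and use the coexistence conditions $\tilde x^i \gg \vect 0_n$ and $\tilde x^1+\tilde x^2 \ll \vect 1_n$ to conclude rank $2n$. The only difference is cosmetic—you exhibit $2n$ explicit columns spanning $\mathbb R^{2n}$, whereas the paper factors the block as $\diag(I-X^1-X^2,\,I-X^1-X^2)$ times a full-row-rank block-structured matrix.
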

\begin{proof}
We observe first
\begin{small}
\begin{equation}\label{eq:Jac1}
\frac{\partial f_{\beta}(x,\beta)}{\partial (x,\beta)}=\frac{\partial f_{\beta}(x^1,x^2,B^1,B^2)}{\partial (x^1,x^2,B^1,B^2)}=\left[\frac{\partial f_{\beta}(x^1,x^2,B^1,B^2)}{\partial (x^1,x^2)}\quad\frac{\partial f_{\beta}(x^1,x^2,B^1,B^2)}{\partial (B^1,B^2)}\right]
\end{equation}
\end{small}
As for the previous lemma, we focus on the second matrix. Consider an arbitrary $i = 1,2$ and observe that $B^ix^i=(I_n\otimes (x^i)^{\top}){\rm{vec}}[(B^i)^{\top}]$. As a preliminary calculation, assume that $B^i$ is positive. One can obtain the gradient of this expression with respect to the entries of $B^i$, these entries being ordered for convenience in the same way as occurs in ${\rm{vec}}[(B^i)^{\top}]$, i.e. as \[b^i_{11}.b^i_{12},\dots,b^i_{1n},b^i_{21},b^i_{22},\dots,b^i_{2n},\dots,b^i_{nn}.\]
and assuming temporarily all are nonzero, the gradient is
\begin{equation}\label{eq:fullcase}
    \frac{\partial (B^ix^i)}{\partial B^i}=\begin{bmatrix}
    [x_1^i\;x_2^i\dots x^i_n]&{\bf{0}}_n^{\top}&\dots &{\bf{0}}_n^{\top}\\
    {\bf{0}}_n^{\top}&[x_1^i\;x_2^i\dots x^i_n]&\dots&{\bf{0}}_n^{\top}\\
    \vdots\\
    {\bf{0}}_n^{\top}&{\bf{0}}_n^{\top}&\dots&[x_1^i\;x_2^i\dots x^i_n]
    \end{bmatrix}
\end{equation}
There is one column corresponding to each entry of $B^i$, with the first $n$ columns corresponding to the first row $[b^i_{11},b^i_{12},\dots,b^i_{1n}]$ of $B^i$, the second group of $n$ columns to the second row of $B^i$, and so on. In the event that some entries of $B^i$ are zero, the corresponding columns on the right of \eqref{eq:fullcase} drop out.

Recall that no row of an irreducible $B^i$ has all zero entries. Given the zero-nonzero entry pattern of our particular irreducible $B^i$, let $n^i_k \geq 1$ be the number of nonzero entries in the $k$-th row of $B^i$, for $k = 1, 2, \hdots, n$. Let $y^i_k \in \mathbb R^{n^i_k}$ be obtained from taking $x^i = [x^i_1, \hdots, x^i_n]^\top$ and deleting the $j$-th entry whenever $\beta^i_{jk} = 0$ (i.e. the $j$-th entry of the $k$-th row of $B^i$ that are zero). Then, there holds
\begin{equation}\label{eq:partialcase}
    \frac{\partial (B^ix^i)}{\partial B^i}=\begin{bmatrix}
    (y^i_1)^{\top}&{\bf{0}}_{n^i_{2}}^{\top}&\dots &{\bf{0}}_{n^i_{n}}^{\top}\\
    {\bf{0}}_{n^i_{1}}^{\top}&(y^i_2)^{\top}&\dots&{\bf{0}}_{n^i_{n}}^{\top}\\
    \vdots\\
    {\bf{0}}_{n^i_1}^{\top}&{\bf{0}}_{n^i_2}^{\top}&\dots&(y^i_n)^{\top}
    \end{bmatrix}
\end{equation}
The matrix clearly has full row rank $n$, as does the matrix in \eqref{eq:fullcase}.

Thus, we have
\begin{equation}\label{eq:full_derivative}
  \frac{\partial f_{\beta}(x^1,x^2,B^1,B^2)}{\partial (B^1,B^2)}=  \begin{bmatrix}
I-X^1-X^2&0\\
0&I-X^1-X^2\end{bmatrix}
\begin{bmatrix}
\frac{\partial (B^1x^1)}{\partial B^1}&0\\
0&\frac{\partial (B^2x^2)}{\partial B^2}\end{bmatrix}
\end{equation}
where $\frac{\partial (B^ix^i)}{\partial B^i}$ is given by \eqref{eq:fullcase} if $B^i$ is a positive matrix and by \eqref{eq:partialcase} if $B^i$ has one or more zero entries, for $i = 1,2$. Clearly, in either case the second matrix on the right of \eqref{eq:full_derivative} has rank $2n$.  

Of course,  in the interior of $\Xi_{2n}$, there holds $x^1\gg {\bf{0}}_n,x^2\gg{\bf{0}}_n$ and $x^1+x^2\ll {\bf{1}}_n$, and therefore the matrix $(I-X^1-X^2)$ is nonsingular. Hence  $\frac{\partial f_{\beta}(x^1,x^2,B^1,B^2)}{\partial (B^1,B^2)}$ has rank~$2n$. Since this is a submatrix of $\frac{\partial f_{\beta}(x,\beta)}{\partial (x,\beta)}$ by \eqref{eq:Jac1}, this means $\frac{\partial f_{\beta}(x,\beta)}{\partial (x,\beta)}$ has rank~$2n$. Therefore, for all $(x,\beta) \in \mathcal X\times \mathcal B$ for which $f_{\beta}(x,\beta)={\bf{0_{2n}}}$, the Jacobian of $f_{\beta}$ with respect to $x$ and $\beta$ is of full row rank.
\end{proof}

Let $\mathcal Z$ be the submanifold of $\mathcal Y$ consisting of the single point ${\bf{0_{2n}}}$. Evidently, it is in fact the image of those points in $\mathcal X \times \mathcal D$ defined by $f_{\delta}(x,\delta)={\bf{0_{2n}}}$ and also (separately considered) it is the image of those points in $\mathcal X \times \mathcal B$ defined by $f_{\beta}(x,\beta)={\bf{0_{2n}}}$.  Since the Jacobians of $f_{\delta}$ with respect to $(x,\delta)$ and $f_{\beta}$ with respect to $(x,\beta)$ have full rank, this means that the maps  $f_{\delta}$ and $f_{\beta}$ are both transversal to $\mathcal Z$. By the Parametric Transversality Theorem \cite[see p.145]{lee2013introduction}, \cite[see p.68]{guillemin2010differential}, it follows that for almost all choices $\bar\delta$ of $\delta$ and $\bar \beta$ of $\beta$, the mappings $f_{\bar\delta}:\mathcal X\to \mathcal Y, f_{\bar \delta}(x)=f_{\delta}(x,\bar \delta)$ and $f_{\bar\beta}:\mathcal X\to\mathcal Y,f_{\bar\beta}(x,\bar\beta) $ will be transversal to $\mathcal Z$, i.e. the Jacobians $\frac{\partial f_{\delta}(x,\bar \delta)}{\partial x}$ and $\frac{\partial f_{\beta}(x,\bar \beta)}{\partial x}$ will be of full rank $2n$  at the zeros of $f_{\bar\delta}$ and $f_{\bar\beta}$. Since these matrices are square and of size $2n\times 2n$, this says that  every zero of $f(x,\bar\delta)$ and of $f(x,\bar\beta)$ is nondegenerate, and as a consequence isolated. Since the zeros occur in a bounded set, this means they are finite in number. The choices of $\delta$ or of $\beta$ for which they are not finite in number, if indeed such choices exist, define a set of measure zero.

The above calculations, because of the definition of $\mathcal X$, only established that coexistence equilibria are isolated. We now examine equilibrium points on the boundary of $\Xi_{2n}$. We know of three, viz. the healthy equilibrium $({\bf{0}}_n, {\bf{0}}_n), (\bar x^1,{\bf{0}}_n)$ and $({\bf{0}}_n,\bar x^2)$, where $\bar x^i, i=1,2$ are equilibrium nonzero solutions of \eqref{eq:singlevirusunlabelled} with $(D,B)=(D^i,B^i), i=1,2$. There are in fact no others, \cite[see Lemma 3.1]{ye2021_bivirus}.

The nondegeneracy of the healthy equilibrium follows from verifying that the Jacobian of \eqref{eq:bivirus} evaluated at the healthy equilibrium is 
\begin{equation}
    J({\bf{0}}_n,{\bf{0}}_n)=\begin{bmatrix}
    -D^1+B^1&0\\0&-D^2+B^2
    \end{bmatrix}
\end{equation}
and Assumption \ref{ass:unstablehealthy}.  In particular, if the $B^i$ are fixed, it is clear that for generic $D^i$ subject to the constraint of Assumption \ref{ass:unstablehealthy}, the block diagonal matrices will be nonsingular, and similarly if the $D^i$ are fixed and $B^i$ generic. 

The Jacobian associated with $(\bar x^1,{\bf{0}}_n)$ was obtained in \eqref{eq:boundaryjacobian} and is nonsingular if and only if $-D^2+(I_n-\bar X^1)B^2$ is nonsingular. It is clear this property holds except possibly when $B^2$ is fixed for $D^2$ on a set of zero, and for $D^2$ fixed for $B^2$ on a set of measure zero. Of course a similar argument deals with the third remaining boundary equilibrium $({\bf{0}}_n,\bar x^2)$.

\section{Proof of Theorem \ref{app:pf_thm_hyperbolicity}}\label{sec:app b}

Since the two claims can be proved by effectively the same procedure, we will show just one of the two claims, viz. that for almost all $B^i$ and with fixed $D^1,D^2=I$, and provided that the number of equilibria is finite, any equilibrium is hyperbolic.  We will use a tool of algebraic geometry. Note that \cite{ye2021_bivirus} outlines the way algebraic geometry can be used to demonstrate that generically, there are a finite number of equilibria; the argument here will be similar. 

More specifically, suppose that the number of equilibria is finite. Let $J$ denote the Jacobian associated with \eqref{eq:bivirus}. With the right side of \eqref{eq:bivirus} denoted by $f(x)$, then $J(x)=\nabla f(x)$. If $\bar x$ is an equilibrium, it is hyperbolic if and only if $J(\bar x)$ has no eigenvalue with zero real part. Since for almost all parameter choices, there is no zero eigenvalue (because zeros of $f(x)$ are nondegenerate), we only need to rule out the possibility of there being pure imaginary eigenvalues of the Jacobian. However, instead of doing this, we shall rule out a situation of wider scope, viz. the possibility that there are eigenvalue pairs whose sum is zero; obviously, if (generically) there are no eigenvalue pairs summing to zero, this implies as a particular case that there are no pure imaginary eigenvalues, since they necessarily occur in complex conjugate pairs which sum to zero.
Taking all this together, we need to rule out existence for almost all $B^i$ of a solution 
$(\bar x,\bar s)$ with $\bar x \in \Xi_{2n}$ and $\bar s \in \mathbb C$  to the following equations:
\begin{align}\label{eq:zerosummming}
f(x)&=0\\\nonumber
|sI_{2n}-\nabla f(x)|&=0\\\nonumber
|sI_{2n}+\nabla f(x)|&=0
\end{align}

This set of $2n+2$ equations in the $2n+1$ scalar variables $(x,s)$ is a set of \textit{multivariate inhomogeneous polynomial} equations, with the coefficients of the monomials appearing in each equation being themselves polynomial (actually affine) in the entries of $D^i$ and $B^i$, for $i=1,2$. An associated set of homogeneous equations in $2n+2$ scalar variables, $(x,s,u)$ say, where $u$ is the homogenising variable can immediately be formed,  e.g. \cite[see pp. 84 ff.]{cox2006using}. If a solution of the latter equations exists with $u\neq 0$, a solution of the inhomogeneous equation exists. The only other solutions of the homogeneous equations {\color{cyan}with} $u=0$ are those associated with a nonzero solution of a {\color{cyan}modified} form of the inhomogeneous equations obtained by setting all terms other than those of highest degree  to zero (the so-called 'solutions at infinity').

Typically, $(2n+2)$ inhomogeneous equations in $(2n+1)$ unknowns might be expected to have no solution. As argued in detail in \cite{ye2021_bivirus}, there is however a condition for the existence of a solution to the associated homogeneous equations involving a \textit{resultant}; the resultant is a multivariate polynomial in the coefficients appearing in the equations, and there can only be a solution to the equations when the resultant evaluates as zero for the particular coefficient values. Since the coefficients are affine in the entries of the $B^i$, this means, crucially, that if for any fixed $D^i$, there is a particular choice of entries of the $B^i$ for which the resultant is nonzero then for that choice \textit{and consequently almost all other choices of the $B^i$ entries} the resultant will be nonzero and there can be no solution to the homogeneous equations or no solution $(\bar x,\bar s)$ to the inhomogeneous. equations. The fact that the particular choice of $B^i$ might have certain zero entries, or entries of certain signs, is irrelevant in drawing this conclusion. Complex solutions as well as real solutions are ruled out by a nonzero resultant, even though the coefficients in the equations are real. 

(The result generalizes the easily understood notion that if two real polynomials $g_1(x,b), g_2(x,b)$ have coefficients affine in some real scalar parameter $b$, and if there is no common zero (real or complex) of the polynomials for a particular $b$, there will be no common zero (real or complex) for almost all $b$.) 

We can now demonstrate that there exists a choice of the $B^i$ for which there is no solution of \eqref{eq:zerosummming}. Choose both $B^i$ to be diagonal, thus $B^i={\rm{diag}}[\beta^i_{11},\beta^i_{22},\dots, \beta^i_{nn}]$, with also $\beta^i_{jj}>1$ for all $j$. Also, require that $\beta^1_{jj}\neq \beta^2_{jj}$ for any $j$. It is then easily checked that the requirement $f(x)=0$ implies that for each $j$, there holds one of three possibilities, viz. $(\bar x^1_j,\bar x^2_j)=(0,0), (1-(\beta^1_{jj})^{-1}, 0), (0,1-(\beta^2_{jj})^{-1})
$. We must check that for each possibility, it is impossible, for almost all choices of $\beta^i_{jj}$, for $\nabla f(x)$ to have two eigenvalues summing to zero. To this end, we must evaluate $\nabla f(x)$  

It is straightforward to see that $\nabla f(x)$ with row and column reordering is a direct sum of $2\times 2$ matrices, of which the $j$-th is given by
\begin{equation}
G_j=\begin{bmatrix}
-1+(1-\bar x^1_j-\bar x^2_j)\beta^1_{jj}-\beta^1_{jj}\bar x^1_j&-\beta_{jj}^1\bar x^1_j\\
-\beta^2_{jj}\bar x^2_j&-1+(1-\bar x^1_j-\bar x^2_j)\beta^2_{jj}-\beta^2_{jj}\bar x^2_j
\end{bmatrix}
\end{equation}
For the three possible values of $\bar x^1,\bar x^2)$, we obtain
\begin{align}
    G_j&=\begin{bmatrix} 
    -1+\beta^1_{jj}&0\\0&-1+\beta^2_{jj}
    \end{bmatrix}
    \quad\mbox{for } (\bar x^1,\bar x^2)=(0,0)\\
    G_j&=\begin{bmatrix}
    -\beta^1_{jj}+1&-\beta^1_{jj}+1\\0&-1+\beta^2_{jj}/\beta^1_{jj}
    \end{bmatrix}
    \quad\mbox{for } (\bar x^1,\bar x^2)=(1-(\beta^1_{jj})^{-1},0)\\
    G_j&=\begin{bmatrix}
    -1+\beta^1_{jj}/\beta^2_{jj}&0\\-\beta^2_{jj}+1&-\beta^2_{jj}+1
    \end{bmatrix}
    \quad\mbox{for }(\bar x^1,\bar x^2)=(0,1-(\beta^2_{jj})^{-1})
\end{align}
It is clear that the eigenvalues of $\nabla f(x)$ will include two eigenvalues for each value of $j$, and these will either be $-1+\beta^1_{jj},-1+\beta^2_{jj}$, or $-\beta^1_{jj}+1,-1+\beta^2_{jj}/\beta^1_{jj}$ or $-1+\beta^1_{jj}/\beta^2_{jj}, -\beta^2_{jj}+1)$. For almost all choices of the $\beta^i_{jj}, i=1,2, j=1,2,\dots, n$, no two eigenvalues will sum to zero. es the hyperbolicity. To rule out the possibility that the resultant is zero, we must also exclude the possibility of a solution at infinity. It is straightforward to check that setting all but the highest degree terms in the equation set $f(x)=0$ to zero for the chosen $B^i$ leads to all $x^i_j$ being zero. When one sets the highest degree terms in the last two equations of \eqref{eq:zerosummming} to zero and imposes the requirement that $x=0$, there results $s=0$. Thus there is no nonzero solution to the modified inhomogeneous equations defining solutions at infinity of the homogeneous equations.  
The two calculations thereby establish the hyperbolicity.

\bibliographystyle{IEEEtran}
\bibliography{PoincareHopfbivirus}
\end{document}